\newcommand{\cyc}{\textrm{cyc}}
\newcommand{\Ch}{\textrm{Ch}}
\newcommand{\m}[1]{\underline{#1}}
\newcommand{\mR}{\m{R}}
\newcommand{\mM}{\m{M}}
\newcommand{\mN}{\m{N}}
\newcommand{\mA}{\m{A}}
\newcommand{\mP}{\m{P}}
\newcommand{\mZ}{\m{\Z}}
\newcommand{\gmM}{{}^{g}\!\mM}
\newcommand{\mMg}{\mM^{g}}  
\newcommand{\Z}{\mathbb Z}
\newcommand{\bZ}{\mathbb Z}
\newcommand{\N}{\mathbb N}
\newcommand{\bS}{\mathbb S}
\newcommand{\Green}{\mathcal Green}
\DeclareMathOperator{\Sym}{Sym}
\newcommand{\tildeEF}{\tilde{E}\mathcal F}
\newcommand{\EF}{E\mathcal F}
\DeclareMathOperator{\sd}{sd}
\DeclareMathOperator{\THH}{THH}
\DeclareMathOperator{\HH}{HH}
\newcommand{\cA}{\mathcal A}
\newcommand{\cC}{\mathcal C}
\DeclareMathOperator{\Hom}{Hom}
\DeclareMathOperator{\End}{End}
\DeclareMathOperator{\Lan}{Lan}
\newcommand{\cO}{\mathcal O}
\newcommand{\Sp}{\mathcal Sp}
\DeclareMathOperator{\boxtor}{Tor}
\DeclareMathOperator{\CoInd}{CoInd}
\DeclareMathOperator*{\holim}{holim}
\mathchardef\mhyphen=45
\newcommand{\op}{\textrm{op}}
\newcommand{\Map}{\textrm{Map}}
\newcommand{\Mack}{\textrm{Mack}_G}
\newcommand{\sMack}{\textrm{sMack}_G}
\newcommand{\Set}{\mathcal Set}
\newcommand{\htp}{\simeq}
\newcommand{\Tor}{\textrm{Tor}}
\newcommand{\Ab}{\mathcal Ab}
\newcommand{\Tamb}{\mathcal Tamb}
\newcommand{\cOTamb}{\cO\mhyphen\Tamb}
\newcommand{\tr}{tr}
\newcommand{\res}{res}
\newcommand{\HC}[1]{\m{HC}^{#1}}
\newcommand{\sm}[1]{\ensuremath{\mathop{\wedge}_{#1}}}
\newcommand{\sma}{\wedge}
\newcommand{\boxover}[1]{\ensuremath{\mathop{\Box}_{#1}}}
\numberwithin{equation}{section}
\newtheorem{theorem}[equation]{Theorem}
\newtheorem*{theorem*}{Theorem}
\newtheorem{corollary}[equation]{Corollary}
\newtheorem{lemma}[equation]{Lemma}
\newtheorem{proposition}[equation]{Proposition}
\theoremstyle{definition}
\newtheorem{definition}[equation]{Definition}
\newtheorem{remark}[equation]{Remark}
\newtheorem{example}[equation]{Example}
\begin{document}

\title{The Witt vectors for Green functors}

\author[A.J. Blumberg]{Andrew~J. Blumberg}
\address{University of Texas, Austin, TX 78712}
\email{blumberg@math.utexas.edu}

\author[T.Gerhardt]{Teena Gerhardt}
\address{Michigan State University, East Lansing, MI 48824}
\email{teena@math.msu.edu}

\author[M.A.Hill]{Michael~A. Hill}
\address{University of California Los Angeles\\Los Angeles, CA 90025}
\email{mikehill@math.ucla.edu}

\author[T.Lawson]{Tyler Lawson}
\address{University of Minnesota, Minneapolis, MN 55455}
\email{tlawson@math.umn.edu}

\keywords{Mackey functors, Green functors, Hochschild homology, Witt vectors, topological Hochschild homology}

\begin{abstract}
We define twisted Hochschild homology for Green functors.  This
construction is the algebraic analogue of the relative topological
Hochschild homology $\THH_{C_n}(-)$, and under flatness conditions it describes the $E_2$ term of the
K\"unneth spectral sequence for relative $\THH$.  Applied to ordinary
rings, we obtain new algebraic invariants.  Extending Hesselholt's
construction of the Witt vectors of noncommutative rings, we interpret
our construction as providing Witt vectors for Green functors.
\end{abstract}

\maketitle

\section{Introduction}

The definition of topological Hochschild homology ($\THH$) is one of
the most successful examples of the program of ``brave new algebra'',
in which classical algebraic constructions are carried out ``over the
sphere spectrum''.  In the case of $\THH$, one literally replaces rings
with ring spectra and tensor products over $\bZ$ with smash products
over $\bS$ to pass from Hochschild homology to topological Hochschild
homology.

One byproduct of the foundational work of Hill-Hopkins-Ravenel has
been a new interpretation of $\THH$ as the $S^1$-norm $N_e^{S^1}$ from
the trivial group to $S^1$~\cite{normTHH,Stolz}.  This description
immediately suggests that one might consider relative versions of
$\THH$ which take as input $H$-equivariant ring spectra for $H
\subseteq S^1$; the $H$-relative $\THH_H$ is then defined to be the
norm $N_H^{S^1}$~\cite[8.2]{normTHH}.  We can also produce an
$H$-relative theory of topological cyclic homology (as well as $H$-relative versions of $TF$ and $TR$). 

At this point, a natural question arises: what is the algebraic
analogue of this equivariant relative $\THH$?  The goal of this paper
is to answer this question: we define and study twisted Hochschild
homology for Green functors.  To explain the nature of our
construction, it is helpful to recall the role of Mackey functors and
Green functors in equivariant homotopy theory.

Mackey functors play the role that abelian groups play in
non-equivariant homotopy theory.  In particular, an equivariant spectrum has
homotopy Mackey functors, rather than just homotopy groups.  The
category of Mackey functors can be endowed with a symmetric monoidal
structure via the box product, and we can talk about associative or
commutative monoids in this category.  These are associative or
commutative Green functors.  The category of Mackey functors (and more
generally, the category of $\m{R}$-modules for a Green functor
$\m{R}$) has enough projectives, and if we consider only finite
groups, then these are all flat.  One could therefore attempt to
directly mimic the classical definition of Hochschild homology,
forming a derived tensor product of a Green functor $\m{R}$ with
itself over its enveloping algebra.

Unfortunately, this version of Hochschild homology is not the one that
we want, for several reasons. From a philosophical standpoint, it is
too general --- this definition makes sense for any finite group $G$,
whereas our equivariant $\THH$ can only be defined for finite
subgroups of $S^{1}$.  Second, there is no obvious relationship
between the $K$-relative Hochschild homology and the $H$-relative
Hochschild homology for $K\subset H$ finite subgroups of $S^{1}$, in
contrast to the situation with $N_H^{S^1}$ and $N_K^{S^1}$.  Finally
and perhaps most importantly, the naive Hochschild homology does not
receive an obvious edge map from the topological version.

For $H \subset G$, the restriction functor from $G$-spectra to $H$-spectra, given by restricting the action to $H$, is denoted $i_H^*$.  Let $K\subset H\subset
S^{1}$ be finite subgroups, and let $R$ be an associative ring orthogonal $K$-spectrum. There is an extension of the K\"unneth spectral sequence from ~\cite{normTHH}
which can be used to compute the homotopy Mackey functors of the relative topological Hochschild homology
$i_{H}^{\ast} THH_{K}(R)$.  Specifically, when Adams indexed, this spectral sequence has the
form
\[
E_{2}^{s,t}=\m{\Tor}^{\m{\pi}_{\ast}
  N_{K}^{H}R^{e}}_{-s}\Big(\m{\pi}_{\ast} (N_{K}^{H}R),\Sigma^{t}\big({}^{\gamma}
\m{\pi}_{\ast} (N_{K}^{H}R)\big)\Big)\Rightarrow
\m{\pi}_{t-s} i_{H}^{\ast} THH_{K}(R),
\]
where the superscript $\gamma$ indicates a particular twist of the
bimodule structure and $\m{\Tor}$ denotes the derived functors of the
symmetric monoidal product.  In this K\"unneth spectral sequence, we
see that we are not considering the $\m{\Tor}$ Mackey functors of a
fixed module with itself: instead, we are $\m{\Tor}$-ing together a
module with a particular twist of itself.

These observations guide our definition of a twisted Hochschild
homology of Green functors for finite subgroups of $S^{1}$.  Our
definition is purely algebraic, building a differential graded Mackey
functor out of a Green functor $\m{R}$ (and building a dg Green
functor if $\m{R}$ is commutative).

For $G \subset S^1$ a finite subgroup, and $\mR$ an associative Green functor for $G$, we define the {\em twisted
  Hochschild complex} as a simplicial Mackey functor for $G$ with
$k$-simplices given by
\[
\HC{G}_{k}(\mR):= \mR^{\Box (k+1)}.
\]
The structure maps are the usual Hochschild maps except that the
$k$\textsuperscript{th} face map $d_{k}$ is given by moving the last factor to the front, acting via the
generator $g = e^{2\pi i / |G|} \in S^1$ of $G$ on the new first factor, and then multiplying the first two factors.

The {\em $G$-twisted Hochschild homology} of $\mR$, denoted
$\m{\HH}_i^G(\m{R})$, is the homology of this complex. This is first defined and studied in
Section~\ref{ssec:TwistedHH}.

For the comparison with topological Hochschild homology, we will also need a relative version of this theory. The construction of relative Hochschild homology for Green functors relies on Mackey functor norms. Let $K \subset G \subset S^{1}$ be finite subgroups, and let $\mR$ be
an associative Green functor for $K$. We will define the {\em twisted
  Hochschild complex relative to $K$} as a simplicial Mackey functor for $G$ with
$k$-simplices given by
\[
\HC{G}_{K}(\mR)_k:= \big(N_{K}^{G}\mR\big)^{\Box (k+1)}.
\]
Again, the structure maps are the usual Hochschild maps except that the
$k$\textsuperscript{th} face map moves the last factor to the front, acts via the
generator $g = e^{2\pi i / |G|} \in S^1$ of $G$ on the new first factor, and then multiplies the first two factors. The {\em $G$-twisted Hochschild homology} of $\mR$ relative to $K$, denoted
$\m{\HH}_K^G(\m{R})_*$, is the homology of this complex. These are
compatible as $G$ varies. In Section \ref{sec:Relative} we study this relative twisted Hochschild homology.

The twisted Hochschild homology of $\m{R}$ relative to $K$ serves as an
algebraic approximation to the $K$-relative $\THH$ of the
Eilenberg-MacLane spectrum $H\m{R}$.  Recall that in the classical setting, for a ring $A$, the topological Hochschild homology of the Eilenberg-MacLane spectrum $HA$ and the Hochschild homology of $A$ are related by a linearization map
\[
\pi_k\THH(HA) \to \HH_k(A)
\]
that factors the Dennis trace from algebraic $K$-theory to Hochschild homology
\[
K_k(A) \to \pi_k\THH(HA) \to \HH_k(A).
\]
In Section \ref{sec:THH} we prove the following.

\begin{theorem}\label{thm:THHEdgeMap1}
For $H\subset G \subset S^1$ finite subgroups, and $R$ a $(-1)$-connected $H$-equivariant commutative ring spectrum, we have a natural homomorphism
\[
\underline{\pi}^G_{k} \THH_{H}(R)\to \m{\HH}_{H}^{G}(\m{\pi}^{H}_0R)_k,
\]
refining the classical maps.
\end{theorem}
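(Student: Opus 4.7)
The plan is to obtain the map as the edge homomorphism of the homotopy spectral sequence associated with the standard cyclic bar model for $i_G^\ast \THH_H(R)$. Since $\THH_H(R) = N_H^{S^1} R$, the norm admits an equivariant cyclic bar presentation giving an equivalence
\[
i_G^\ast \THH_H(R) \simeq |X_\bullet|, \qquad X_k := (N_H^G R)^{\wedge (k+1)},
\]
of simplicial $G$-spectra, in which the face maps come from multiplication except that the last face $d_k$ moves the rightmost smash factor to the front, acts on it by the generator $g = e^{2\pi i / |G|} \in G \subset S^1$, and then multiplies the first two factors. This $g$-twist is the manifestation at the level of $G$-spectra of the cyclic $S^1$-action on $\THH$.

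The homotopy Mackey functor spectral sequence of this simplicial $G$-spectrum then has the form $E^2_{p,q} = \pi_p(\underline{\pi}^G_q X_\bullet) \Rightarrow \underline{\pi}^G_{p+q} \THH_H(R)$ and carries a natural edge homomorphism
\[
\underline{\pi}^G_k \THH_H(R) \longrightarrow E^2_{k,0} = H_k\bigl(\underline{\pi}^G_0 X_\bullet\bigr).
\]
To identify the target with $\m{\HH}^G_H(\m{\pi}^H_0 R)_k$, I invoke two connectivity-driven compatibilities. First, for $(-1)$-connected $G$-spectra $A$ and $B$ there is an isomorphism $\underline{\pi}^G_0(A \wedge B) \cong \underline{\pi}^G_0 A \,\Box\, \underline{\pi}^G_0 B$ of Mackey functors, expressing the strong symmetric monoidality of $\underline{\pi}^G_0$ on connective spectra; iterating gives $\underline{\pi}^G_0 (N_H^G R)^{\wedge(k+1)} \cong (\underline{\pi}^G_0 N_H^G R)^{\Box(k+1)}$. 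Second, the norm preserves $(-1)$-connectedness, and for a $(-1)$-connected $H$-equivariant commutative ring spectrum $R$ one has $\underline{\pi}^G_0 N_H^G R \cong N_H^G \m{\pi}^H_0 R$ as Green functors, where $N_H^G$ on the right denotes the Mackey functor norm. Combining these,
\[
\underline{\pi}^G_0 X_k \cong (N_H^G \m{\pi}^H_0 R)^{\Box(k+1)} = \HC{G}_H(\m{\pi}^H_0 R)_k.
\]

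It then remains to verify that this identification intertwines the simplicial structure. The multiplicative face maps translate directly, since the multiplication on $N_H^G \m{\pi}^H_0 R$ is induced from that on $N_H^G R$ via the strong monoidal functor $\underline{\pi}^G_0$. For the twisted face $d_k$, both the rotation in the cyclic bar model and the twist built into $\HC{G}_H$ are implemented by the same generator $g \in G$, so naturality of $\underline{\pi}^G_0$ makes the two twists agree. Specializing $H = G = e$ recovers the classical linearization map $\pi_k \THH(HA) \to \HH_k(A)$, so the construction refines the classical one; naturality in $R$ follows from functoriality of the cyclic bar model and the spectral sequence.

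The main obstacle is recording the two compatibilities in the third step, namely the strong monoidality of $\underline{\pi}^G_0$ on $(-1)$-connected $G$-spectra (a Künneth statement for Mackey functors) and the norm/$\pi_0$ commutation, both as isomorphisms of Green functors that respect the cyclic twist. These are essentially formal consequences of the Hill--Hopkins--Ravenel framework for equivariant connective spectra, but assembling them into a simplicial identification carrying the correct $g$-action — rather than a degreewise identification of Mackey functors — is what makes the edge homomorphism land exactly in the $k$-th homology of the complex $\HC{G}_H(\m{\pi}^H_0 R)$ and produce the asserted map.
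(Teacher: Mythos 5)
Your proposal is correct and follows essentially the same route as the paper: the skeletal-filtration spectral sequence of the simplicial $G$-spectrum $(N_H^G R)^{\wedge(\bullet+1)}$ presenting $i_G^\ast\THH_H(R)$, its edge homomorphism onto $H_k$ of the degree-zero row, and the identification of $\m{\pi}_0^G$ of the simplicial levels with $\HC{G}_H(\m{\pi}_0^H R)_\bullet$ via the degree-zero collapse of the K\"unneth spectral sequence together with the definition of the Mackey-functor norm as $\m{\pi}_0$ of the spectrum-level norm. The compatibilities you flag in your final paragraph are exactly the inputs the paper uses (its Proposition~\ref{prop:pizerosym} and Definition~\ref{def:normhhr}/Theorem~\ref{thm:hoyer}), so no further work is needed.
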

Since the construction of twisted Hochschild homology is purely
algebraic, it also gives us new invariants of
associative and commutative rings. For $A$ a commutative ring, the linearization map of \ref{thm:THHEdgeMap1}  yields lifts of the Dennis trace to $G$-twisted Hochschild homology for each finite $G\subset S^1$
\[
K_{k}(A)\to \m{\HH}_{e}^{G}(A)_k(G/G).
\]

The twisted Hochschild complex of a Green functor has a kind
of cyclotomic structure.  We give a direct definition of the geometric
fixed points $\Phi^N$ for a simplicial Green functor (see
Definition~\ref{def:GFP}), and using this we prove the following
structural result about $\HC{G}$.

\begin{proposition}
Let $H \subset G \subset S^1$ be finite subgroups, and let $N$ be a subgroup of $G$. For $\mR$ a commutative Green functor for $H$, there is a natural isomorphism of simplicial Green functors
\[
\Phi^{N}\big(\HC{G}_{H}(\mR)_{\bullet}\big)\cong
\HC{G/N}_{HN/N}(\Phi^{H\cap N}\mR)_{\bullet}.
\]
\end{proposition}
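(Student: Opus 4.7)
The plan is to establish the isomorphism in each simplicial degree and then verify compatibility with the simplicial structure maps. Two algebraic facts underpin the argument: first, the geometric fixed-points functor $\Phi^{N}$ is symmetric monoidal with respect to the box product (this should be verified as part of, or immediately from, Definition~\ref{def:GFP}); and second, there is a natural ``norm-restriction'' isomorphism of $G/N$-Mackey functors
\[
\Phi^{N}\bigl(N_{H}^{G}\mM\bigr)\cong N_{HN/N}^{G/N}\bigl(\Phi^{H\cap N}\mM\bigr),
\]
valid for any $H$-Mackey functor $\mM$. This is the algebraic analogue of the familiar formula for geometric fixed points of a norm of $G$-spectra, and it is compatible with the multiplicative structures when $\mM$ is a Green functor, so in the Green functor case both sides carry canonical commutative monoid structures in $G/N$-Mackey functors and the isomorphism is one of Green functors.

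Granted these inputs, the computation on $k$-simplices is a formal chain:
\[
\Phi^{N}\bigl(\HC{G}_{H}(\mR)_{k}\bigr)=\Phi^{N}\bigl((N_{H}^{G}\mR)^{\Box(k+1)}\bigr)\cong \bigl(\Phi^{N} N_{H}^{G}\mR\bigr)^{\Box(k+1)}\cong \bigl(N_{HN/N}^{G/N}\Phi^{H\cap N}\mR\bigr)^{\Box(k+1)}=\HC{G/N}_{HN/N}(\Phi^{H\cap N}\mR)_{k}.
\]
All degeneracies and the inner face maps $d_{0},\ldots,d_{k-1}$ are built solely from the unit and multiplication of the Green functor $N_{H}^{G}\mR$, so they transport through $\Phi^{N}$ automatically by symmetric monoidality together with the multiplicativity of the norm-restriction isomorphism.

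The principal obstacle is the outer face map $d_{k}$, which involves the twist by the distinguished generator $g=e^{2\pi i/|G|}\in G$. Under the identification above, we need the $g$-action on $N_{H}^{G}\mR$ to correspond, after applying $\Phi^{N}$, to the action by the chosen generator $\bar g=e^{2\pi i/|G/N|}$ of $G/N$ on $N_{HN/N}^{G/N}\bigl(\Phi^{H\cap N}\mR\bigr)$. Because every finite subgroup of $S^{1}$ is cyclic, $G$, $N$, and $G/N$ are cyclic, and the image of $g$ in $G/N$ is exactly $\bar g$. The required compatibility therefore amounts to checking that the norm-restriction isomorphism is equivariant for the conjugation action of $G$ on $N_{H}^{G}\mR$ and for its quotient action of $G/N$ on the right-hand side; this is a naturality statement that can be reduced to the case of projective generators. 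Once this compatibility is in hand, the level-wise isomorphisms assemble to the asserted isomorphism of simplicial Green functors, proving the proposition.
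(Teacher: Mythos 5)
Your proposal is correct and follows essentially the same route as the paper: the paper's proof likewise applies $\Phi^{N}$ levelwise, invokes its strong symmetric monoidality (Proposition~\ref{prop:GFPSym}) together with the norm/geometric-fixed-points isomorphism $\Phi^{N}N_{H}^{G}\mM\cong N_{HN/N}^{G/N}\Phi^{H\cap N}\mM$ (Theorem~\ref{thm:GeomFPNorm}), and then observes that naturality and the compatibility of the $g$-action with geometric fixed points handle all the simplicial structure maps, including the twisted last face. Your extra care with the identification of the image of $g$ in $G/N$ with the chosen generator $\bar g$ is a welcome elaboration of what the paper leaves implicit.
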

Using this cyclotomic structure, in Section \ref{sec:Example} we define a purely algebraic analogue of TR$(A;p)$, and compute this algebraic analogue in the case of $A=\mathbb{F}_p$. 

A particularly interesting facet of our definition of $H$-twisted
Hochschild homology is that it can be interpreted as defining the {\em
  Witt vectors of a Green functor} (See Section~\ref{sec:GreenWitt}).  Work of Hesselholt and of
Hesselholt-Madsen shows that for a commutative ring $R$,
$$\pi_0(\THH(R)^{C_{p^{n}}}) \cong W_{n+1}(R),
$$
where $W_{n+1}(R)$ denotes the length $n+1$ $p$-typical Witt vectors of $R$~\cite{HesselholtMadsen}. For $H$-relative topological Hochschild homology, we prove that $\m{\pi}_0$ is captured by twisted Hochschild homology.

\begin{theorem}\label{thm:ConnectiontoTHH}
Let $H\subset G\subset S^1$ be finite subgroups and let $R$ be a
$(-1)$-connected commutative ring orthogonal $H$-spectrum. We have a
natural isomorphism
\[
\m{\pi}_0^G\big(\THH_H\!(R)\big)\cong \m{\HH}_H^G(\m{\pi}_0^H R)_0.
\]
\end{theorem}
This suggests the following definition:

\begin{definition}
Let $H\subset G\subset S^1$ be finite subgroups and let $\mR$ be a Green functor for $H$. The $G$-Witt
vectors for $\mR$ are defined by
\[
\m{\mathbb
  W}_{G}(\mR):=\m{\HH}_H^{G}(\mR)_0.
\]
\end{definition}
With this definition we have the following analog of the Hesselholt-Madsen result 
\begin{corollary}
For $H\subset G\subset S^1$ finite subgroups, and $R$ a (-1)-connected commutative ring orthogonal $H$-spectrum,
\[
\m{\pi}_0^G\THH_H(R) \cong \m{\mathbb{W}}_G(\m{\pi}_0^HR).
\]
\end{corollary}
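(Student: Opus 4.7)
The plan is essentially to observe that this corollary is a direct substitution: the right-hand side of Theorem~\ref{thm:ConnectiontoTHH} is exactly the object defined to be $\m{\mathbb{W}}_G$. In more detail, I would first unwind the definition just given, which says that for a Green functor $\mR$ for $H$, $\m{\mathbb{W}}_G(\mR) = \m{\HH}_H^G(\mR)_0$. Applying this with $\mR = \m{\pi}_0^H R$ (which is a commutative Green functor for $H$ because $R$ is a $(-1)$-connected commutative ring $H$-spectrum), I obtain
\[
\m{\mathbb{W}}_G(\m{\pi}_0^H R) = \m{\HH}_H^G(\m{\pi}_0^H R)_0.
\]

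Then I would invoke Theorem~\ref{thm:ConnectiontoTHH}, which provides a natural isomorphism $\m{\pi}_0^G \THH_H(R) \cong \m{\HH}_H^G(\m{\pi}_0^H R)_0$ under the same hypotheses on $R$. Composing the equality above with this isomorphism yields the desired $\m{\pi}_0^G \THH_H(R) \cong \m{\mathbb{W}}_G(\m{\pi}_0^H R)$, and naturality is inherited from Theorem~\ref{thm:ConnectiontoTHH}.

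Since the corollary is purely a repackaging, there is no genuine obstacle: the entire mathematical content lives in Theorem~\ref{thm:ConnectiontoTHH}, whose proof (identifying $\m{\pi}_0^G$ of relative $\THH$ with the $0$-simplices of the twisted Hochschild complex of $\m{\pi}_0^H R$) is the substantive step. The only thing worth double-checking in writing this corollary out is that the hypotheses match exactly — $H\subset G\subset S^1$ finite, $R$ a $(-1)$-connected commutative ring orthogonal $H$-spectrum — so that the input to the theorem and the input to the definition of $\m{\mathbb{W}}_G$ agree, which they do by construction.
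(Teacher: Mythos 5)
Your proposal is correct and matches the paper's treatment: the corollary is stated there as an immediate consequence of Theorem~\ref{thm:ConnectiontoTHH} combined with the definition $\m{\mathbb W}_{G}(\mR)=\m{\HH}_H^{G}(\mR)_0$, exactly the substitution you perform. All the substantive content indeed lives in the theorem, and no further argument is needed.
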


One of the key insights in the theory of equivariant homotopical
algebra is that Green functors are not the correct algebraic analogues
of $E_\infty$ rings.  Specifically, the homotopy Mackey functors of an
equivariant commutative ring spectrum have more structure than simply
a Green functor: they are Tambara functors~\cite{Brun}.  These are
Green functors with multiplicative transfer maps called norms.

When working with Tambara functors, our equivariant Witt vectors
inherit additional structure.  When $\m{R}$ is a Tambara functor, then
the norm maps provide Teichm\"uller style lifts in the $C_{mn}$-Witt
vectors. We study this in Section~\ref{ssec:Teich}, proving the following theorem there.
\begin{theorem}
Let $\m{R}$ be a \(G\)-Tambara functor, where \(G=C_{n}\) is a finite cyclic group.  For all $H\subset G$, we have a
Teichm\"uller-style, multiplicative map
\[
\m{R}(G/H)\to \big(\m{\mathbb W}_{G}(\m{R})\big)(G/G).
\]
\end{theorem}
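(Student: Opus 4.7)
The plan is to construct $\tau$ as a composite $\pi\circ\nu$, where
\[
\nu\colon \mR(G/H) \to (N_H^G i_H^* \mR)(G/G)
\]
is a multiplicative ``norm element'' map produced by the Tambara structure on $\mR$, and $\pi\colon (N_H^G i_H^* \mR)(G/G) \to \m{\mathbb W}_G(\mR)(G/G)$ is the canonical projection from the $0$-simplex level of the twisted Hochschild complex onto its zeroth homology.

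For the map $\nu$, one views $(N_H^G i_H^* \mR)(G/G)$ informally as the $G$-equivariant tensor power $\bigotimes_{gH \in G/H} g \cdot \mR(G/H)$, and sets $\nu(a) := \bigotimes_{gH} g \cdot a$; multiplicativity $\nu(ab) = \nu(a)\nu(b)$ is then manifest by componentwise multiplication, and $\nu(a)$ is tautologically $g$-invariant because $g$ acts by cyclically permuting equal tensor factors. Formally, $\nu$ is the universal multiplicative norm built into the Tambara structure on $\mR$ (equivalently, the canonical norm element associated with the commutative Green functor $i_H^* \mR$).

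For the projection $\pi$, first verify that $\HC{G}_H(\mR)_\bullet$ is a simplicial commutative Green functor: the ordinary face and degeneracy maps are Green functor homomorphisms (box-product and multiplication), and the twisted face $d_k$ factors as a box-product swap followed by the Green-functor automorphism $g$ on $N_H^G i_H^* \mR$ followed by multiplication of the first two factors---each of these steps is a Green functor map. Consequently, taking $\pi_0$ of $\HC{G}_H(\mR)_\bullet$ yields a commutative Green functor, and $\pi$ is a Green functor homomorphism, hence a ring homomorphism at each Mackey functor level.

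Multiplicativity of $\tau = \pi \circ \nu$ now follows formally: $\tau(ab) = \pi(\nu(ab)) = \pi(\nu(a)\nu(b)) = \pi(\nu(a))\pi(\nu(b)) = \tau(a)\tau(b)$. The principal technical obstacle is making $\nu$ rigorous as a morphism in the Mackey-functor category, since the informal diagonal picture implicitly identifies $(N_H^G i_H^*\mR)(G/G)$ with a tensor product along $G/H$; this is precisely where the Tambara hypothesis does real work, as a Tambara functor canonically supplies the required multiplicative norm into the norm Mackey functor (with the correct naturality in $H$ as $H$ varies over subgroups of $G$). As a final sanity check justifying the ``Teichm\"uller-style'' terminology, one specializes to $\mR = \m{A}$ the constant Mackey functor on a commutative ring $A$ with trivial $G$-action and $H = \{e\}$, where $\tau$ recovers the classical Teichm\"uller lift $A \to W_{|G|}(A) \cong \m{\mathbb W}_G(\m{A})(G/G)$ via Hesselholt--Madsen together with Theorem~\ref{thm:ConnectiontoTHH}.
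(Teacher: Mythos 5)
Your overall strategy (external multiplicative norm from the Tambara structure, followed by a projection onto $H_0$ of a twisted Hochschild complex) is the right one, but there is a genuine gap in the second half: the map $\pi$ as you describe it does not land in $\m{\mathbb W}_{G}(\mR)(G/G)$. Since $\mR$ is a $G$-Green functor, $\m{\mathbb W}_{G}(\mR)=\m{\HH}_{G}^{G}(\mR)_{0}$ is the zeroth homology of the complex $\HC{G}_{\bullet}(\mR)$, whose $0$-simplices are $\mR$ itself (concretely, $\m{\mathbb W}_{G}(\mR)\cong \mR_{G}$, the coinvariants of the Weyl action). The complex whose $0$-simplices are $N_{H}^{G}i_{H}^{\ast}\mR$ is the \emph{relative} complex $\HC{G}_{H}(i_{H}^{\ast}\mR)_{\bullet}$, and its $H_0$ is $\m{\mathbb W}_{G}(i_{H}^{\ast}\mR)\cong\big(N_{H}^{G}i_{H}^{\ast}\mR\big)_{G}$, which is a different Green functor from $\m{\mathbb W}_{G}(\mR)$. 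So your composite $\pi\circ\nu$ terminates in the wrong object, and the target moreover varies with $H$, whereas the theorem's target is fixed.

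The missing ingredient is a map of $G$-Green functors $N_{H}^{G}i_{H}^{\ast}\mR\to\mR$, i.e.\ the counit of the Tambara ($G$-commutative monoid) structure; this is a second, independent use of the Tambara hypothesis beyond the element-level norm $\nu\colon\mR(G/H)\to (N^{G/H}\mR)(G/G)$ that you do invoke. One must also check that this counit is compatible with the twisted face map $d_k$ (equivalently, that it is equivariant for the Weyl/tensor-induction action on the source and the Weyl action on the target), which is exactly why the paper factors it through the Weyl coinvariants $(N^{G/H}\mR)_{W_{G}(H)}$ and records this as a map of chain complexes $\HC{G}_{G}\big((N_{H}^{G}i_{H}^{\ast}\mR)_{W_{G}(H)}\big)_{\ast}\to\HC{G}_{G}(\mR)_{\ast}$ (Theorem~\ref{thm:teich}). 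With that map in hand, the Teichm\"uller lift is $\mR(G/H)\xrightarrow{\nu}(N^{G/H}\mR)(G/G)\to\mR(G/G)\to\mR_{G}(G/G)=\m{\mathbb W}_{G}(\mR)(G/G)$, and the Weyl equivariance of $\nu$ (Proposition~\ref{prop:Norms}) is also what identifies the composite with restriction followed by the $|G/H|$th power map. A secondary point: your sanity check is also off, since for the fixed-point $G$-Green functor $\m{A}$ one gets $\m{\mathbb W}_{G}(\m{A})=\m{A}_{G}=\m{A}$; the identification with $W_{\langle n\rangle}(A)$ in Proposition~\ref{prop:witt} concerns $\m{\HH}^{C_n}_{e}(A)_0$, i.e.\ the Witt vectors of $A$ regarded as a Green functor for the \emph{trivial} group, which involves the norm $N_{e}^{C_n}$.
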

When we apply this to an ordinary, non-equivariant commutative ring, then this provides an algebraic explanation of the Teichm\"uller lifts in terms of constructions of equivariant stable homotopy theory.

In Section~\ref{sec:CyclicNerves}, we apply our twisted Hochschild homology to compute the
homotopy Mackey functors of the relative $\THH$ for a pointed monoid
ring.  Hesselholt and Madsen observed that if $M$ is a pointed monoid,
then we have a natural $S^1$-equivariant equivalence
\[
\THH(R[M])\simeq \THH(R)\wedge N^{\cyc}(M).
\]
In our $C_n$-relative context, there is a generalization of this
statement that yields an analogous decomposition for our twisted
Hochschild homology. Here we also allow \(C_{n}\) to act on the pointed monoid.

\begin{theorem}
Let $G=C_n$. If $M$ is a pointed monoid in $G$-sets and if $\m{R}$ is a
$G$-Green functor, then we have a natural weak equivalence
\[
\HC{G}_\ast(\m{R}[M])\simeq \HC{G}_\ast(\m{R})\Box
\m{C}^{cell}_{\ast}(N^{\cyc}_{G} M;\mA),
\]
where \(\mA\) is the Burnside Mackey functor and where \(\m{C}_{\ast}^{cell}\) is the Mackey extension of the Bredon cellular chains.
\end{theorem}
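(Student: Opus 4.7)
The plan is to identify both sides at the level of simplicial objects and then pass to chain complexes via an Eilenberg–Zilber argument. The first step is a level-wise computation. Writing $\m{R}[M]$ as the Mackey functor with $\m{R}[M](G/H)=\m{R}(G/H)[M^{H}]$ (suitably interpreted, cf.\ the pointed-monoid ring construction discussed earlier in the paper), one obtains a natural isomorphism of Mackey functors
\[
\big(\m{R}[M]\big)^{\Box (k+1)}\;\cong\;\m{R}^{\Box(k+1)}\;\Box\;\mA\big[M^{\wedge (k+1)}\big],
\]
where $\mA[-]$ denotes the free (Burnside) Mackey functor on a pointed $G$-set, and where $M^{\wedge(k+1)}$ carries the diagonal $G$-action. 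This uses that for Green functors $\m R$ and Mackey functors $\m M,\m N$, the box product distributes over the pointed-monoid-ring construction in the form $(\m R[M])\Box(\m R[N])\cong(\m R\Box\m R)[M\wedge N]$, together with the fact that $\mA$ is the unit of $\Box$.

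Next, I would match the simplicial structure. On the $\m{R}^{\Box(k+1)}$ tensorand the face and degeneracy maps are exactly the Hochschild structure maps defining $\HC{G}_{\ast}(\m R)$. On the $M^{\wedge(k+1)}$ tensorand the inner face maps $d_{i}$ ($0\le i<k$) multiply adjacent entries in $M$, and the degeneracies insert basepoints; the outer face map $d_k$, by the definition of $\HC{G}$, moves the last entry to the front, acts by the generator $g=e^{2\pi i/|G|}\in G$ on that new first entry, and then multiplies the first two. These are precisely the simplicial operators defining the $G$-twisted cyclic nerve $N^{\cyc}_{G}M$ viewed as a simplicial pointed $G$-set. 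Thus one obtains a natural isomorphism of simplicial Mackey functors
\[
\HC{G}_{\ast}\!\big(\m R[M]\big)\;\cong\;\HC{G}_{\ast}(\m R)\;\Box\;\mA\!\big[N^{\cyc}_{G}M_{\bullet}\big].
\]

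Finally, I would pass from simplicial Mackey functors to the statement about cellular chains. The Bredon cellular chains $\m C_{\ast}^{cell}(N^{\cyc}_{G}M;\mA)$ are exactly the normalized chain complex associated to the simplicial Mackey functor $\mA[N^{\cyc}_{G}M_{\bullet}]$, so applying the Dold–Kan/normalization functor to the isomorphism above and invoking the equivariant Eilenberg–Zilber theorem for the box product of simplicial Mackey functors produces the claimed natural weak equivalence
\[
\HC{G}_{\ast}\!\big(\m R[M]\big)\;\simeq\;\HC{G}_{\ast}(\m R)\;\Box\;\m C_{\ast}^{cell}\!\big(N^{\cyc}_{G}M;\mA\big).
\]

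The main obstacle is the bookkeeping in the second step: one has to verify carefully that the $g$-twist appearing in the last face map of $\HC{G}$ matches the twist in the $G$-equivariant cyclic nerve $N^{\cyc}_{G}M$, both on the $\m R$-tensorand (where it is already built into $\HC{G}(\m R)$) and on the $M$-tensorand (where it defines $N^{\cyc}_{G}M$). Once this compatibility is pinned down, the rest is a formal application of distributivity of $\Box$ over free Mackey functors and of the equivariant Eilenberg–Zilber theorem.
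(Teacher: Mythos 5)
Your argument is correct and follows essentially the same route as the paper's proof: both rest on the decomposition $\m{R}[M]\cong\m{R}\Box\mA[M]$, the resulting levelwise identification $(\m{R}[M])^{\Box(k+1)}\cong\m{R}^{\Box(k+1)}\Box\mA\big[M^{\wedge(k+1)}\big]$ compatible with the (twisted) simplicial structure maps, and the identification of the Bredon cellular chains of $N^{\cyc}_{G}M$ with $\HC{G}_{\ast}(\mA[M])$. Two cosmetic points: your parenthetical description $\m{R}[M](G/H)=\m{R}(G/H)[M^{H}]$ is not the paper's definition (the identities you actually use all follow from $\m{R}[M]\cong\m{R}\Box\mA[M]$ and $\mA[M]\Box\mA[N]\cong\mA[M\wedge N]$, so nothing breaks), and the concluding Eilenberg--Zilber step is not needed under the paper's conventions, since the box product of simplicial Mackey functors is defined levelwise and your second step already produces the required isomorphism of simplicial objects.
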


We believe there is a natural relationship between the construction of
twisted Hochschild homology of Green functors we study in this paper
and the definitions of Hochschild-Witt vectors introduced by
Kaledin~\cite{KaledinHW}.  Kaledin has introduced a
functorial Hochschild-Witt complex associated to an associative unital
$k$-algebra $A$.  When $A$ is commutative, finite type (as an algebra)
over $k$, and smooth, a version of the HKR theorem implies that this
construction recovers the de Rham-Witt complex of $A$~\cite[\S
  6]{KaledinHW}.  When $A$ is noncommutative, the zeroth homology
coincides with Hesselholt's Witt vectors~\cite{HesselholtWitt}.  Most
interestingly, the higher homology groups of this Hochschild-Witt
complex provides a (conjectural) algebraic model of $TR$.  We intend
to return to give a more complete treatment in future work.

Our treatment here focuses on the commutative case, and we largely restrict attention to commutative Green functors for our constructions. One could instead ask for the purely associative case, allowing us to build most easily a version of Shukla homology for associative Green functors. We sketch how this works in Section~\ref{sec:Shukla}, showing that the homotopy invariant versions of our constructions are, in fact, homotopy invariant. The main problem here is that the category of Mackey functors has infinite projective dimension \cite{GreenleesProjective}, and in practice, even the most basic non-projective Mackey functors have infinite projective dimension. This makes the construction of resolutions much trickier than in the classical case, and we leave this section as mainly a sketch.

\subsection*{Acknowledgments.}  The authors would like to thank John
Greenlees and Mike Mandell for helpful conversations. The authors also thank an anonymous referee for comments which helped improve the exposition of this paper. This work was supported by the National Science Foundation [DMS-1151577 to Blumberg, DMS-1149408 to Gerhardt, DMS-1509652 to Hill, and DMS-1610408 to Lawson]. This project
was made possible by the hospitality of the Hausdorff Research
Institute for Mathematics at the University of Bonn and the Mathematical Sciences Research Institute (MSRI).

\section{Homological constructions in Mackey functors}\label{sec:HomConst}

In this section, we construct the twisted Hochschild complex in Mackey functors. We begin by setting up necessary foundations. Throughout we fix a
finite group $G$.  

\begin{definition}
Let \(\cA_{\N}\) denote the Burnside category, the category with objects finite \(G\)-sets and with morphisms isomorphism classes of spans:
\[
\cA_{\N}(S,T)=\Big\{[S\leftarrow U\rightarrow T]\Big\},
\]
where two spans \(S\leftarrow U\to T\) and \(S\leftarrow U'\to T\) are isomorphic if there is an isomorphism \(U\to U'\) making the obvious triangles with \(S\) and \(T\) commute. Composition is via pullback.
\end{definition}

Since a span is also just a map \(U\to S\times T\), the Burnside category is self-dual. In particular, the disjoint union of finite \(G\)-sets is both the product and coproduct in the Burnside category. 
\begin{definition}
A semi-Mackey functor is a product preserving functor
\[
\mM\colon \cA_{\N}\to\Set.
\]
The category of semi-Mackey functors is the obvious category of functors and natural transformations.
\end{definition}

Since the Burnside category is pre-additive in the sense that finite coproducts and products exist and agree, the hom objects are all commutative monoids. This is a consequence of the pre-additivity, and it implies that for every finite set \(T\) and for every semi-Mackey functor \(\mM\), the set \(\mM(T)\) inherits the structure of a commutative monoid.

\begin{definition}
A Mackey functor is a semi-Mackey functor \(\mM\) such that for each finite \(G\)-set \(T\), the commutative monoid \(\mM(T)\) is an abelian group. Let \(\Mack\) denote the full subcategory of semi-Mackey functors spanned by the Mackey functors.
\end{definition}
Again, we stress that being a group is a property of a commutative
monoid, and the commutative monoid structure arises by
functoriality.  We will find it helpful in what follows to regard
Mackey functors as functors from \(\cA_{\N}\) to the 
category of sets; in particular, the categorical constructions we 
use are defined in terms of the underlying functors to sets.

For homological algebra reasons, it is helpful also to build several other equivalent forms of Mackey functors. 
\begin{definition}
Let \(\cA\) denote the category enriched in abelian groups for which the objects are finite \(G\)-sets and for which the hom objects \(\cA(S,T)\) are the group completions of \(\cA_{\N}(S,T)\).
\end{definition}
In the category \(\cA\), we again have that disjoint union is both the product and the coproduct, but now the category is additive. The inverses on the hom objects allow us to record the inverses in a Mackey functor, as opposed to a semi-Mackey functor.
\begin{proposition}
A Mackey functor is an additive functor 
\[
\mM\colon\cA\to\Ab.
\]
\end{proposition}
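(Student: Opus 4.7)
The plan is to show that the two definitions of Mackey functor — as a product-preserving functor \(\mM\colon\cA_{\N}\to\Set\) with each \(\mM(T)\) an abelian group, and as an additive functor \(\mM\colon\cA\to\Ab\) — agree via extension along the canonical functor \(\cA_{\N}\to\cA\). The strategy is a universal property argument: \(\cA\) is built from \(\cA_{\N}\) by group-completing each hom monoid, so the content of the proposition is essentially that a Mackey functor automatically sends spans to \emph{group} homomorphisms, whereupon group completion supplies the extension.

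First, for a Mackey functor \(\mM\) in the semi sense with group values, I would verify that each \(\mM(f)\colon\mM(S)\to\mM(T)\) is an abelian group homomorphism for every span \(f\in\cA_{\N}(S,T)\). The preceding discussion already observes that the commutative monoid structure on each \(\mM(T)\) is forced by pre-additivity: the fold span \(T\sqcup T\to T\), together with the product-preservation isomorphism \(\mM(T\sqcup T)\cong\mM(T)\times\mM(T)\), produces the addition. Naturality of the fold span with respect to \(f\), combined with functoriality of \(\mM\), forces \(\mM(f)\) to preserve this addition. Consequently, the assignment \(\mM\colon\cA_{\N}(S,T)\to\Hom_{\Set}(\mM(S),\mM(T))\) refines to a commutative monoid homomorphism landing in \(\Hom_{\Ab}(\mM(S),\mM(T))\), which, crucially, is already an abelian group because \(\mM(T)\) is.

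Since the target is a group, this monoid homomorphism factors uniquely through the group completion \(\cA(S,T)\), yielding an additive map \(\cA(S,T)\to\Hom_{\Ab}(\mM(S),\mM(T))\). These maps assemble into an additive functor \(\cA\to\Ab\): composition in \(\cA\) is bilinear and agrees with that in \(\cA_{\N}\) on the generating image, so functoriality extends, and preservation of direct sums is inherited from product preservation of the original \(\mM\). Conversely, any additive functor \(\cA\to\Ab\) restricts along \(\cA_{\N}\to\cA\) and composes with the forgetful functor \(\Ab\to\Set\) to give a product-preserving functor with group values, since additive functors between additive categories automatically preserve finite biproducts. These two constructions are mutually inverse, and a parallel argument on natural transformations upgrades this to an equivalence of categories.

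The main obstacle is the additivity check for \(\mM(f)\), which requires carefully unwinding how addition is encoded by the semi-additive structure on \(\cA_{\N}\) and tracking it through the functoriality of \(\mM\); once this is in place, the remainder of the argument is formal bookkeeping about the universal property of group completion and bilinearity of composition in \(\cA\).
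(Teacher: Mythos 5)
Your proof is correct, and it is the standard argument that the paper leaves implicit (the proposition is stated there without proof): morphisms of a product-preserving functor out of a semiadditive category are automatically monoid homomorphisms, the induced maps on hom-monoids factor through group completion because the targets are already groups, and bilinearity of composition plus preservation of biproducts gives the additive functor $\cA\to\Ab$, with the converse following by restriction along $\cA_{\N}\to\cA$. No gaps.
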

This shows that the category of Mackey functors is an abelian category.  Since $\cA$ is self-dual, we can equivalently talk about ``co-Mackey functors''.  

Note that any finite $G$-set can be written as a disjoint union of orbits $G/H$. Since a Mackey functor is additive, the value of a Mackey functor $\m{M}$ on any finite $G$-set is determined by its values on these orbits. 

\begin{remark}
For a Mackey functor $\m{M}$, the Weyl group $W_G(H) = N_G(H)/H$ acts on $\m{M}(G/H)$. These Weyl group actions will play a role in our definition of twisted Hochschild homology in Section \ref{ssec:TwistedHH}
\end{remark}

The following example is particularly relevant for this work.

\begin{example}
For a $G$-spectrum $X$, the equivariant homotopy groups of $X$ form Mackey functors $\m{\pi}_n^G$:
$$
\m{\pi}_n^G(X)(G/H) := \pi_n(X^H). 
$$
\end{example}

We note that when discussing the model structure, we will also find it useful to describe Mackey functors as modules over the ``Mackey algebra'' $\mu_G(\bZ)$~\cite[\S3]{ThevanazWebb}.  

We next describe the symmetric monoidal structure on $\Mack$.

\subsection{The box product of Mackey functors}\label{ssec:BoxProduct}

We now review the closed symmetric monoidal structure on the category
of Mackey functors (see also~\cite[\S 1]{LewisMandell} for a review of
this structure).  The box product of Mackey functors $\mM \Box \m{N}$ is a Day convolution
product, defined by left Kan extension over the Cartesian product of finite $G$-sets:
\[
\xymatrix{ {\cA\times\cA}\ar[d]_{\mhyphen\times\mhyphen} \ar[r]^{\mM\times \m{N}} &
  {\mathcal Ab\times\mathcal Ab}\ar[r]^{\mhyphen\otimes\mhyphen} & {\mathcal Ab}
  \\ {\cA}\ar[urr]_{\mM\Box\m{N}} }
\]

\begin{theorem}
The box product of two Mackey functors is again a Mackey functor, and the category $\Mack$ is a symmetric monoidal category with product $\Box$ and unit $\mA=\cA(\ast,-)$.
\end{theorem}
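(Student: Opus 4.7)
The plan is to recognize this theorem as an instance of the general Day convolution theorem. The category $\cA$ is essentially small, enriched over $\Ab$, and carries a symmetric monoidal structure $(\cA, \times, *)$ with unit the one-point $G$-set, in which the monoidal product is bi-additive because disjoint union distributes over Cartesian product of finite $G$-sets. Mackey functors are precisely the additive functors $\cA \to \Ab$, and $\Ab$ is a cocomplete closed symmetric monoidal category, so the hypotheses of the Day convolution machinery are satisfied.

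First I would unpack the left Kan extension defining $\mM \Box \mN$ into the familiar coend formula
\[
(\mM \Box \mN)(U) \;\cong\; \int^{S,T \in \cA} \cA(S \times T,\, U) \;\otimes\; \mM(S) \;\otimes\; \mN(T),
\]
with the coend computed in $\Ab$. Each factor $\cA(S \times T, U)$, $\mM(S)$, $\mN(T)$ is an abelian group, so the coend lands in $\Ab$. Additivity of $(\mM \Box \mN)(-)$ in $U$ is immediate from the additivity of each representable $\cA(S \times T, -)$: a biproduct $U \oplus U'$ in $\cA$ (given by disjoint union of $G$-sets) is sent to the biproduct of the two coends. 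Hence $\mM \Box \mN$ is a Mackey functor.

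For the symmetric monoidal structure I would invoke the enriched Day convolution theorem: for any essentially small $\Ab$-enriched symmetric monoidal category $(\cC, \otimes, I)$, the category of additive functors $\cC \to \Ab$ inherits a closed symmetric monoidal structure whose product is Day convolution and whose unit is the representable $\cC(I, -)$. Associativity, symmetry, and coherence all descend formally from the corresponding properties of $(\cA, \times, *)$. To confirm the unit computation concretely, I would apply the co-Yoneda lemma twice to
\[
(\mA \Box \mN)(U) \;\cong\; \int^{S,T} \cA(S \times T, U) \otimes \cA(*, S) \otimes \mN(T),
\]
first contracting the $S$-coend against $\cA(*, S)$ to obtain $\cA(* \times T, U) \otimes \mN(T)$, then using $* \times T \cong T$ and contracting the $T$-coend to recover $\mN(U)$.

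The main point requiring care is reconciling the two common formulations of the box product: the unenriched left Kan extension along $\times \colon \cA \times \cA \to \cA$ displayed in the statement, versus the $\Ab$-enriched left Kan extension along the induced functor $\cA \otimes_{\Ab} \cA \to \cA$. These agree because the Cartesian product on $\cA$ is bi-additive and $\otimes \colon \Ab \times \Ab \to \Ab$ factors through the tensor product of abelian groups; after this identification, the coend formula above is unambiguous and the symmetric monoidal structure is the standard output of the Day convolution machinery.
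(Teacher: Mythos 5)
Your proof is correct and is the standard Day-convolution argument; the paper itself states this theorem without proof (deferring to Lewis--Mandell), and your coend formula, the additivity check via representables, and the double co-Yoneda contraction for the unit are exactly the expected verification. The one imprecision is your closing claim that the unenriched left Kan extension over $\cA\times\cA$ literally agrees with the $\Ab$-enriched one over $\cA\otimes_{\Ab}\cA$ --- in general an unenriched colimit over the comma category does not see the additive structure on the hom groups, so these can differ; the paper's own way around this is Strickland's description of $\Box$ as a $\Set$-level Kan extension over $\cA_{\N}\times\cA_{\N}$, after which the abelian-group structure (and hence the enriched Day convolution) comes along for free.
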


This Kan extension takes place in additive categories, and this makes comparisons with later Kan extensions more difficult. We can, however, work instead entirely in \(\Set\)-valued functors, noting that a product preserving functor from the Burnside category to \(\Set\) is necessarily commutative monoid valued and that being an abelian group is a property of a commutative monoid. Work of Strickland makes this precise. 

\begin{proposition}[{\cite[Appendix A.3]{StricklandTambara}}]
The box product of two Mackey functors \(\mM\) and \(\m{N}\) is given by the left Kan extension of the Cartesian product of sets over the Cartesian product of finite \(G\)-sets:
\[
\xymatrix{
{\cA_{\N}\times\cA_{\N}}\ar[d]_{\mhyphen\times\mhyphen} \ar[r]^{\mM\times\m{N}} &
{\Set\times\Set}\ar[r]^{\mhyphen\times\mhyphen} & {\Set} \\
{\cA_{\N}}\ar[urr]_{\mM\Box\m{N}}
}
\]
\end{proposition}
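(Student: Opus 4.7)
The plan is to prove this by showing that both Kan extensions—the abelian-valued one used to define $\mM \Box \m{N}$, and the set-valued one in the statement—satisfy the same universal property when mapping into a Mackey functor. This lets us conclude an isomorphism by Yoneda. The essential point is that any set-valued natural transformation out of $\mM \times \mN$ into a Mackey functor is automatically bilinear with respect to the commutative monoid structure on semi-Mackey functor values.

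First I would verify that the set-valued left Kan extension really produces a semi-Mackey functor. Writing the coend pointwise,
\[
(\mM\Box_{\Set}\mN)(T) = \mathrm{coend}_{S_1,S_2}\bigl(\cA_\N(S_1\times S_2, T)\times \mM(S_1)\times \mN(S_2)\bigr),
\]
I would show that disjoint unions in the target variable are carried to products, using that $\cA_\N$ is preadditive (so $(-)\times(-)$ distributes over $\sqcup$) and that $\mM$, $\mN$ are themselves semi-Mackey. The commutative monoid structure on the coend is inherited from the monoid structure on hom sets of $\cA_\N$, and the fact that $\mM(S)$ and $\mN(S)$ are groups then lifts this monoid structure to a group structure: given a representative $[\alpha, a, b]$, its inverse is represented by $[\alpha, -a, b]$, with the requisite relation $[\alpha, 0, b] = 0$ following from the existence of a zero span in $\cA_\N$, namely the one factoring through $\emptyset$.

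Next I would compare universal properties. For a Mackey functor $\mP$, the Ab-valued Day convolution satisfies
\[
\Mack(\mM\Box\mN,\mP) \;=\; \bigl\{\text{bilinear natural transformations } \mM\times\mN\Rightarrow \mP\circ(-\times -)\bigr\},
\]
while the set-valued Kan extension satisfies
\[
\sMack(\mM\Box_{\Set}\mN, \mP) \;=\; \Set^{\cA_\N\times\cA_\N}\bigl(\mM\times\mN,\;\mP\circ(-\times -)\bigr).
\]
The crux is showing these two sets coincide, i.e., that every set-valued natural transformation is automatically bilinear. Fix such a $\phi$ and take $a,a'\in\mM(S_1)$ and $b\in\mN(S_2)$. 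Naturality of $\phi$ with respect to the restriction span $\iota_i^*\colon S_1\sqcup S_1\to S_1$ (for $i=1,2$), combined with the identifications $\mM(S_1\sqcup S_1)\cong \mM(S_1)\times\mM(S_1)$ and $\mP((S_1\sqcup S_1)\times S_2)\cong \mP(S_1\times S_2)\times \mP(S_1\times S_2)$, forces
\[
\phi_{S_1\sqcup S_1,\,S_2}\bigl((a,a'),b\bigr) \;=\; \bigl(\phi_{S_1,S_2}(a,b),\,\phi_{S_1,S_2}(a',b)\bigr).
\]
Naturality with respect to the transfer of the fold $\nabla\colon S_1\sqcup S_1\to S_1$, which induces addition on both $\mM$ and $\mP$, then yields $\phi(a+a',b)=\phi(a,b)+\phi(a',b)$, and symmetrically in the second variable.

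The main obstacle will be keeping track of the distinction between the restriction and transfer spans induced by an inclusion, and ensuring the right combination is used to extract the required identities from the semi-Mackey product-preservation. Once the automatic bilinearity is established, Yoneda delivers a canonical natural isomorphism $\mM\Box_{\Set}\mN\cong \mM\Box\mN$, proving the proposition.
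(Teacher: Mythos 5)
The paper offers no proof of this proposition --- it is quoted directly from Strickland's appendix --- so your argument has to stand on its own. Your overall strategy is sound, and the part you call the ``essential point'' is correct: the corepresented functors agree because any natural transformation $\mM\times\mN\Rightarrow\mP\circ(-\times-)$ into a Mackey functor is automatically bilinear, and your derivation of this from naturality against the projection spans $[S_1\sqcup S_1\leftarrow S_1\to S_1]$ followed by the span $[S_1\sqcup S_1 = S_1\sqcup S_1\xrightarrow{\nabla}S_1]$ is the standard argument and is executed correctly. The identification of $\Hom$ out of the $\Ab$-valued Day convolution with bilinear transformations is also fine.

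The gap is that nearly all of the actual content sits in the step you treat as routine: showing that the set-valued left Kan extension is product-preserving (and hence lands in $\Mack$, which is what licenses the Yoneda conclusion). In the $\Ab$-enriched setting additivity of the Day convolution is formal, because the coend is a colimit and commutes with finite direct sums. In $\Set$ it is not: one has $\cA_{\N}(S_1\times S_2,T_1\sqcup T_2)\cong\cA_{\N}(S_1\times S_2,T_1)\times\cA_{\N}(S_1\times S_2,T_2)$, but a coend of a product is not the product of the coends. What is actually needed is a normalization argument on representatives $[\omega,a,b]$: for surjectivity of the comparison map $(\mM\Box_{\Set}\mN)(T_1\sqcup T_2)\to(\mM\Box_{\Set}\mN)(T_1)\times(\mM\Box_{\Set}\mN)(T_2)$ one glues two representatives over $S_1^{(1)}\sqcup S_1^{(2)}$ and $S_2^{(1)}\sqcup S_2^{(2)}$ using the splittings $\mM(S\sqcup S')\cong\mM(S)\times\mM(S')$ together with the zero (empty) spans on the cross terms, and for injectivity one must chase the coend relations through these splittings. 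This is precisely the substance of Strickland's A.3. Without it, the universal-property comparison only constrains maps \emph{out of} $\mM\Box_{\Set}\mN$ into Mackey functors, which identifies $\mM\Box\mN$ with the reflection of $\mM\Box_{\Set}\mN$ into $\Mack$ rather than with $\mM\Box_{\Set}\mN$ itself. Your remaining claims (the group structure via $[\alpha,-a,b]$, the role of the zero span) are correct, but they too presuppose that product-preservation has already been established, since the addition on the coend is defined through the isomorphism $(\mM\Box_{\Set}\mN)(T\sqcup T)\cong(\mM\Box_{\Set}\mN)(T)^2$.
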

In particular, all of the abelian group structure which we want on our putative symmetric monoidal product comes along for free on the ordinary left Kan extension.

To get a more explicit description, it helps to unpack this a little in terms of representable Mackey functors.

\begin{definition}\label{defn:Representables}
If $T$ is a finite $G$-set, then let
\[
\mA_T=\cA(T,-)
\]
be the functor represented by $T$.
\end{definition}

These are projective objects in the category of Mackey functors, and
by the Yoneda Lemma, we have a canonical isomorphism
\[
\Hom(\mA_T,\mM)\cong \mM(T)
\]
for any Mackey functor $\mM$.  Moreover, by the contravariant Yoneda
Lemma, the assignment
\[
T\mapsto \mA_T
\]
defines a co-Mackey functor object in the category of Mackey functors.
This structure will be applied below to determine the internal $\Hom$
in the category of Mackey functors.

Since the box product is defined by a Kan extension, the value on
representable functors is canonically determined by the following
formula:

\begin{proposition}\label{prop:boxrep}
If $T_1$ and $T_2$ are finite $G$-sets, then we have a canonical
isomorphism
\[
\mA_{T_1}\Box\mA_{T_2}\cong \mA_{T_1\times T_2}.
\]
\end{proposition}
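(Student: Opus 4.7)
The plan is to establish the isomorphism by computing the functor represented by each side and invoking Yoneda. By the preceding proposition of Strickland, it suffices to work with the $\Set$-valued left Kan extension along the Cartesian product $\times \colon \cA_{\N}\times\cA_{\N}\to\cA_{\N}$, so no additive subtleties will intervene.

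First I would invoke the universal property of the left Kan extension to write, for any Mackey functor $\mN$,
\[
\Hom\!\big(\mA_{T_1}\Box\mA_{T_2},\mN\big)\;\cong\;\mathrm{Nat}_{(U,V)\in\cA_{\N}\times\cA_{\N}}\!\big(\mA_{T_1}(U)\times\mA_{T_2}(V),\;\mN(U\times V)\big).
\]
Then I would apply the Yoneda lemma in the variable $U$: since $\mA_{T_1}(U)=\cA_{\N}(T_1,U)$, natural transformations out of this in $U$ into any functor $F(U)$ are classified by $F(T_1)$. Applying this with $F(U)=\mathrm{Nat}_V\!\big(\mA_{T_2}(V),\mN(U\times V)\big)$ reduces the right-hand side to $\mathrm{Nat}_V\!\big(\mA_{T_2}(V),\mN(T_1\times V)\big)$. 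A second application of Yoneda in $V$ collapses this to $\mN(T_1\times T_2)$, and one final application of Yoneda identifies $\mN(T_1\times T_2)\cong\Hom(\mA_{T_1\times T_2},\mN)$.

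Stringing the isomorphisms together yields a natural isomorphism
\[
\Hom\!\big(\mA_{T_1}\Box\mA_{T_2},\mN\big)\;\cong\;\Hom\!\big(\mA_{T_1\times T_2},\mN\big)
\]
of functors in $\mN$. By the Yoneda lemma applied in the category of Mackey functors, this representable isomorphism is induced by a unique isomorphism $\mA_{T_1}\Box\mA_{T_2}\cong \mA_{T_1\times T_2}$, which one can trace through to see is the canonical one arising from the identity span on $T_1\times T_2$.

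The main thing to be careful about is the bookkeeping for the two-variable Kan extension: one has to ensure that treating the colimit/coend one variable at a time produces the same object as the joint Kan extension along $\times$. This is a Fubini-type statement for coends, but in our situation it follows immediately from the fact that a left Kan extension along a product of functors can be computed iteratively. No deeper obstacle appears, since Strickland's result lets us sidestep any enrichment issues by computing everything in $\Set$.
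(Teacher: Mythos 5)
Your proof is correct and matches the paper's approach: the paper simply asserts the formula as the standard fact that a left Kan extension (here Day convolution) carries representables to representables, and your corepresentability-plus-Yoneda argument is exactly the standard justification of that fact, with the appeal to Strickland's $\Set$-level description correctly handling the enrichment issue.
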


Since the box product commutes with colimits in each variable by
construction, Proposition~\ref{prop:boxrep} allows us to reduce box
product computations to colimits of representables.

\begin{definition}
If $\mM$ is a Mackey functor and $T$ is a finite $G$-set, then let
\[
\mM_T:= \mA_T\Box \mM.
\]
\end{definition}

Generalizing the observation above, the $\mM_T$ fit together as a
co-Mackey functor:

\begin{proposition}\label{prop:comack}
For any Mackey functor $\mM$, the assignment
\[
T\mapsto \mM_T
\]
is a co-Mackey functor object in Mackey functors.
\end{proposition}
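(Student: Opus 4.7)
The plan is to realize $T \mapsto \mM_T$ as a composite of two functors, each of which already carries the required structure.  First I would invoke the observation recorded just after Definition~\ref{defn:Representables}: contravariant Yoneda applied to $\cA(-,-)$, together with the self-duality of $\cA$, shows that $T \mapsto \mA_T$ is already a co-Mackey functor object in $\Mack$.  Concretely, a morphism $\alpha \in \cA(S,T)$ produces, via the self-duality identifying $\cA$ with $\cA^{\op}$, a natural transformation $\mA_S \to \mA_T$, and this assignment is additive in $S \sqcup S' \mapsto \mA_S \oplus \mA_{S'}$ because finite coproducts in $\cA$ are also products and representables convert one to the other.

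Next I would observe that for any fixed Mackey functor $\mM$, the endofunctor $(-) \Box \mM \colon \Mack \to \Mack$ is additive.  This is immediate from the construction of the box product as a left Kan extension: $\Box$ preserves colimits separately in each variable, and in particular preserves finite direct sums.  Composing the co-Mackey functor $T \mapsto \mA_T$ with this additive endofunctor therefore yields an additive functor $\cA \to \Mack$ sending $T$ to $\mA_T \Box \mM = \mM_T$.  This is precisely the data of a co-Mackey functor object in $\Mack$, which is what we needed to produce.

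There is no real obstacle here: the statement is essentially formal once one has the functoriality of representables and the bi-cocontinuity of $\Box$.  The only point that may warrant a sentence of care is verifying that the functoriality in $T$ coming through the Kan extension agrees with applying $(-) \Box \mM$ to the evident morphisms $\mA_S \to \mA_T$; this follows from the naturality of the universal property defining the Day convolution, with Proposition~\ref{prop:boxrep} identifying $\mA_T \Box \mA_U$ with $\mA_{T \times U}$ naturally in both variables serving as the prototypical compatibility check.
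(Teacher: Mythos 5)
Your proof is correct and matches the paper's intent: the paper gives no explicit argument, merely saying the $\mM_T$ "generalize the observation above" that $T\mapsto\mA_T$ is a co-Mackey functor, and your composition of that observation with the additivity of $(-)\Box\mM$ is exactly the intended (and essentially only) argument.
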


Proposition~\ref{prop:comack} allows us to describe an internal hom
object for $\Mack$.

\begin{definition}
If $\mM$ and $\m{N}$ are Mackey functors, then let
\[
\m{\Hom}(\mM,\m{N})(T):= \Hom(\mM_T,\m{N}).
\]
\end{definition}

It is straightforward to verify that $\m{\Hom}$ satisfies the expected
adjunctions with the box product.

\begin{theorem}
The category $\Mack$ is a closed symmetric monoidal category with
$\m{\Hom}$ as the internal mapping object.
\end{theorem}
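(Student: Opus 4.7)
The plan is to establish the tensor-hom adjunction
\[
\Hom_{\Mack}(\mM \Box \mN, \mP) \cong \Hom_{\Mack}\bigl(\mM, \m{\Hom}(\mN, \mP)\bigr),
\]
natural in $\mM$, $\mN$, and $\mP$. Combined with the already-established symmetric monoidal structure of $(\Mack,\Box,\mA)$, this exhibits $\m{\Hom}$ as the internal mapping object and proves $\Mack$ is closed.

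Before verifying the adjunction, I first need to confirm that the proposed $\m{\Hom}(\mN,\mP)$ is genuinely a Mackey functor. Proposition~\ref{prop:comack} records that $T \mapsto \mN_T$ is a co-Mackey functor object in $\Mack$; postcomposing with the contravariant functor $\Hom(-,\mP)\colon \Mack \to \Ab$ and invoking the self-duality of $\cA$ converts this co-Mackey structure into an additive functor $\cA \to \Ab$, which is precisely a Mackey functor. This also makes $\m{\Hom}(-,-)$ functorial in both arguments.

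Next I verify the adjunction on representables in the first slot. For $\mM = \mA_T$, Yoneda on the target and the definitions of $\mN_T$ and $\m{\Hom}$ give the chain
\[
\Hom(\mA_T \Box \mN, \mP) \;=\; \Hom(\mN_T, \mP) \;=\; \m{\Hom}(\mN,\mP)(T) \;\cong\; \Hom\bigl(\mA_T, \m{\Hom}(\mN,\mP)\bigr),
\]
where the first equality is Definition of $\mN_T$ (via Proposition~\ref{prop:boxrep} and the box-product definition), the second is the definition of $\m{\Hom}$, and the third is the Yoneda lemma for representable Mackey functors. Naturality of this isomorphism in the variable $T$ (against morphisms of $\cA$) is exactly the statement that both sides transform under the same co-Mackey structure from Proposition~\ref{prop:comack}.

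Finally, I extend the adjunction to an arbitrary $\mM$ by cocontinuity. Since the representables $\mA_T$ form a generating set of projectives, every Mackey functor $\mM$ can be presented as a colimit $\mM = \operatorname{colim}_i \mA_{T_i}$. The box product $-\Box \mN$ is defined as a left Kan extension and hence commutes with colimits in each variable, while $\Hom(-,\mP)$ and $\Hom(-,\m{\Hom}(\mN,\mP))$ convert colimits to limits. The isomorphism established on representables therefore extends to
\[
\Hom(\mM \Box \mN, \mP) \cong \lim_i \Hom(\mA_{T_i} \Box \mN, \mP) \cong \lim_i \Hom\bigl(\mA_{T_i}, \m{\Hom}(\mN,\mP)\bigr) \cong \Hom\bigl(\mM, \m{\Hom}(\mN,\mP)\bigr),
\]
and this reassembly is natural in $\mN$ and $\mP$ by functoriality. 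The main technical obstacle I anticipate is careful bookkeeping of this naturality in all three slots simultaneously, and in particular verifying that the co-Mackey structure on $T \mapsto \mN_T$ transports correctly through $\Hom(-,\mP)$ via the self-duality of $\cA$ to produce a Mackey structure compatible with the transfer and restriction maps in $\mP$; once that duality is handled cleanly, the remainder of the argument is formal.
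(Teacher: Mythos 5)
Your proof is correct, and it is exactly the standard verification the paper alludes to when it declares the adjunction ``straightforward to verify'': check the tensor--hom adjunction on representables via Yoneda and the definition of $\mN_T$, then extend to all of $\Mack$ by cocontinuity of $-\Box\mN$, with Proposition~\ref{prop:comack} and the self-duality of $\cA$ supplying the Mackey structure on $\m{\Hom}(\mN,\mP)$ and the naturality in $T$. The paper gives no further argument, so there is nothing to contrast.
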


Throughout this paper, we will see that constructions in Mackey functors are often closely related to analogous constructions in orthogonal $G$-spectra. The category $\Sp_G$ of orthogonal $G$-spectra for a complete universe
$U$ is a symmetric monoidal category under the smash product, and
this is compatible with the symmetric monoidal structure on Mackey
functors; the following compatibility result follows
from~\cite[1.3]{LewisMandell}.

\begin{proposition}\label{prop:pizerosym}
If $E$ and $F$ are cofibrant $(-1)$-connected genuine $G$-spectra, then we have
a natural isomorphism
\[
\m{\pi}_0 E\Box\m{\pi}_0 F\cong \m{\pi}_0 (E\wedge F).
\]
\end{proposition}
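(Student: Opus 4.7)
The plan is to first construct a natural comparison map and then verify it is an isomorphism by cellular induction, using that all constructions involved are suitably cocontinuous on the $(-1)$-connected subcategory.

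To build the natural map, I would use the Day convolution universal property of $\Box$. For any two finite $G$-sets $T_1, T_2$, there is a pairing
\[
\m{\pi}_0 E(T_1) \otimes \m{\pi}_0 F(T_2) \longrightarrow \m{\pi}_0(E \wedge F)(T_1 \times T_2)
\]
obtained by identifying $\m{\pi}_0 E(T) \cong [\Sigma^\infty T_+, E]^G$ and taking external smash products of representatives. The key check is that this pairing is natural for morphisms in $\cA_{\N}$ in both variables, i.e., compatible with both restrictions and transfers; the restriction compatibility is immediate from functoriality of fixed points, while the transfer compatibility follows from the Wirthmüller-type construction of transfers via Spanier--Whitehead duality. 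By the universal property of Day convolution over $\cA_{\N}$, this data assembles into a natural Mackey functor map $\alpha_{E,F} \colon \m{\pi}_0 E \Box \m{\pi}_0 F \to \m{\pi}_0(E \wedge F)$.

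Next I would verify $\alpha_{E,F}$ is an isomorphism for free orbit spectra. When $E = \Sigma^\infty (G/H)_+$ and $F = \Sigma^\infty (G/K)_+$, we have $\m{\pi}_0 E \cong \mA_{G/H}$ and $\m{\pi}_0 F \cong \mA_{G/K}$; moreover $E \wedge F \simeq \Sigma^\infty(G/H \times G/K)_+$, so $\m{\pi}_0(E \wedge F) \cong \mA_{G/H \times G/K}$. Under these identifications $\alpha_{E,F}$ is precisely the canonical isomorphism of Proposition~\ref{prop:boxrep}. I would then induct up cell structures: for a cofibrant $(-1)$-connected $E$, built as a sequential colimit of cell attachments from $G/H_+ \wedge S^n$ with $n \geq 0$, both $\m{\pi}_0(- \wedge F)$ and $\m{\pi}_0(-) \Box \m{\pi}_0 F$ convert the cell-attachment pushouts into pushouts of Mackey functors. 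On the smash side, this uses the fact that smashing with a $(-1)$-connected cofibrant $F$ preserves $(-1)$-connectivity and cofibrations, so the long exact sequence in $\m{\pi}_*$ makes $\m{\pi}_0$ right exact on such pushouts (since $\m{\pi}_{-1}$ vanishes). On the box side, $\Box$ is cocontinuous in each variable by construction.

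The main obstacle is this right-exactness step: one must confirm that for a cofibration $A \hookrightarrow B$ of cofibrant $(-1)$-connected $G$-spectra with cofiber $C$ (also $(-1)$-connected), the sequence $\m{\pi}_0 A \to \m{\pi}_0 B \to \m{\pi}_0 C \to 0$ is exact as Mackey functors, and similarly after smashing with $F$. This reduces to evaluating on each orbit $G/H$ and using the classical long exact sequence together with $\pi_{-1}(C^H) = 0$, after checking that the connectivity is preserved on each fixed point set, which is standard for cofibrant $(-1)$-connected $G$-spectra. Once these two pushout-preservation properties are established, the inductive step goes through symmetrically in $F$, and the isomorphism propagates from the representable base case to all cofibrant $(-1)$-connected pairs, yielding the natural isomorphism claimed.
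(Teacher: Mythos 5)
Your argument is essentially correct, and it is the standard direct proof of this compatibility. Note, however, that the paper does not actually prove this proposition: it is stated as following from \cite[1.3]{LewisMandell}, so there is no in-paper argument to compare against. Your write-up supplies the content of that citation: construct the pairing via the identification $\m{\pi}_0E(T)\cong[\Sigma^\infty T_+,E]^G$ and the self-duality of $\Sigma^\infty T_+$ (which is exactly what makes the external smash pairing natural for spans, i.e.\ for transfers as well as restrictions), assemble it by the universal property of Day convolution, check it on representables where it reduces to $\mA_{T_1}\Box\mA_{T_2}\cong\mA_{T_1\times T_2}$, and propagate by cellular induction using right-exactness of $\m{\pi}_0$ on cofiber sequences of $(-1)$-connected spectra together with cocontinuity of $\Box$ in each variable.

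Two small points to tighten. First, in the right-exactness step the group that must vanish is $\pi_{-1}(A^H)$, not $\pi_{-1}(C^H)$: in the long exact sequence $\pi_0A\to\pi_0B\to\pi_0C\to\pi_{-1}A$, exactness at $\pi_0C$ requires the $(-1)$-connectivity of the subobject $A$ (which holds for the skeleta in your filtration). Second, a cofibrant $(-1)$-connected orthogonal $G$-spectrum need not literally be built from cells $G/H_+\wedge S^n$ with $n\ge 0$; you should first replace $E$ and $F$ by weakly equivalent CW spectra with cells in non-negative dimensions, which is legitimate because both $\m{\pi}_0(-\wedge F)$ (a derived smash of cofibrant objects) and $\m{\pi}_0(-)\Box\m{\pi}_0F$ are invariant under weak equivalences of cofibrant $(-1)$-connected inputs. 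With those adjustments the induction closes as you describe.
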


\begin{remark}\label{rem:Symm}
This isomorphism can be used to define the symmetric monoidal
structure on Mackey functors, using that the category of Mackey
functors is the heart of the category of genuine $G$-spectra:
\[
\mM\Box\mN\cong \m{\pi}_0 (H\mM\wedge H\mN).
\]
Here $H$ denotes an Eilenberg-Mac Lane functor
\[
H \colon \Mack \to \Sp_G
\]
that takes a Mackey functor to (a cofibrant version of) the associated Eilenberg-Mac Lane $G$-spectrum.
\end{remark}

Green functors are defined using the symmetric monoidal structure on $\Mack$. 

\begin{definition}
A (commutative) \emph{Green functor} for $G$ is an associative (commutative) monoid in the symmetric monoidal category $\Mack$.
\end{definition}

\subsection{The twisted cyclic nerve}\label{ssec:TwistedNcyc}

We now construct a cyclic nerve, which takes as input a Green
functor $\mR$, an $\mR$-bimodule $\mM$, and an element $g \in G$ which
is used to twist the action on $\mM$.

\begin{definition}
Let $\m{R}$ be a Green functor for $C_{n}$, and let $\mM$ be a left
$\mR$-module. If $g\in C_{n}$, then let $\gmM$ denote $\mM$ with the
module structure twisted by $g$: if $\mu$ is the action map for $\mM$,
then the action map ${}^{g}\!\mu$ for $\gmM$ is given by
\begin{center}
\begin{tikzcd}
\m{R}\Box \mM \arrow[rd, "{}^g\!\mu"] \ar[d, "g \Box 1"] & \\ \m{R}\Box\mM
\arrow[r, "\mu"] & \mM.
\end{tikzcd}
\end{center}
If $\mM$ is instead a right $\m{R}$-module, then we will denote the
obvious analogous structure by $\mMg$.

We call these induced module structures the {{twisted module
    structure}}.
\end{definition}

The (twisted) cyclic bar constructions we consider are simply the
evident generalization of the ordinary cyclic bar constructions where
we allow various twistings on the bimodule coordinate.  The (twisted)
cyclic cobar construction is defined analogously; see
Remark~\ref{rem:twistedcoh} for further discussion.

\begin{definition}
Let $G\subset S^{1}$ be a finite subgroup, let $g\in G$, and let $\mR$
be an associative Green functor for $G$, and let $\mM$ be an
$\mR$-bimodule. Define the {\em twisted cyclic nerve} of $\mR$ with
coefficients in $\gmM$ as the simplicial Mackey functor with
$k$-simplices given by
\[
[k] \mapsto
\HC{G}_{k}(\mR;\gmM):=\gmM\Box\mR^{\Box k}.
\]
For $1\leq i\leq k-1$, the face map $d_{i}$ is simply the
multiplication between the $i$\textsuperscript{th} and
$(i+1)$\textsuperscript{st} box factors. The $0$\textsuperscript{th}
face map $d_{0}$ is the ordinary (right) action map. The
$k$\textsuperscript{th} face map $d_{k}$ is given by the composite:
\[
\gmM\Box\mR^{\Box k}\xrightarrow{\tau} \mR\Box\gmM\Box\mR^{\Box (k-1)}\xrightarrow{{}^{g}\mu\Box Id} \gmM\Box\mR^{\Box (k-1)}.
\]
The map $\tau$ is an isomorphism rotating the last factor to the front. 
For $0\leq i\leq k-1$, the degeneracy map $s_{i}$ is induced by the
unit in the $(i+1)$\textsuperscript{st} factor.
\end{definition}

It is straightforward to verify that this is in fact a simplicial
object.  All identities that do not include the $k$th face map follow
from the standard arguments, relying only on the associativity of the
product and the definition of the unit.  The identities involving the
$k$th face map follow from the observation that the $G$-action is via
associative ring maps.

The standard homological algebra argument shows the following, since
$g$ in our definition only changed the bimodule, rather than changing
the fundamental category.
\begin{lemma}\label{lem:Tor}
For any finite $G\subset S^{1}$, any choice of $g\in G$, any
associative Green functor $\mR$ that is flat over the Burnside Mackey functor \(\mA\), and any $\mR$-bimodule $\mM$, the
homology of $\HC{G}_{\bullet}(\mR;\gmM)$ is
\[
\m{\boxtor}_{\ast}^{\mR\Box\mR^{op}}(\mR,\gmM).
\]
\end{lemma}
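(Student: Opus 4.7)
The plan is to follow the standard strategy from classical Hochschild homology: construct a flat resolution of $\mR$ as a module over the enveloping Green functor $\mR\Box\mR^{op}$, and then identify its box product with $\gmM$ as the twisted cyclic nerve. Because the twist enters only through the bimodule structure on $\gmM$ and not through the ambient categorical framework, this amounts essentially to the classical argument once the flatness inputs are secured.

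First I would introduce the two-sided bar construction $B_\bullet(\mR,\mR,\mR)$ with $B_k=\mR^{\Box(k+2)}$, face maps given by multiplication of adjacent factors, and $\mR\Box\mR^{op}$-module structure given by the outermost two factors. The unit $\eta\colon\mA\to\mR$ supplies an extra degeneracy $h_k(r_0\Box\cdots\Box r_{k+1}) = 1\Box r_0\Box\cdots\Box r_{k+1}$, so that the augmentation $B_\bullet\to\mR$ (by full multiplication) is a simplicial homotopy equivalence once one forgets the left $\mR$-action. Hence $B_\bullet\to\mR$ is a quasi-isomorphism of complexes.

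To verify flatness of each $B_k$ over $\mR\Box\mR^{op}$, I would use the canonical isomorphism
\[
\mR^{\Box(k+2)}\cong(\mR\Box\mR^{op})\Box\mR^{\Box k}
\]
of $\mR\Box\mR^{op}$-modules. Since $\mR$ is flat over $\mA$, iterating the fact that $\Box$ preserves flatness gives that $\mR^{\Box k}$ is flat over $\mA$ for every $k\ge 0$, and extension of scalars along $\mA\to\mR\Box\mR^{op}$ preserves flatness. Thus $B_\bullet\to\mR$ is a flat resolution, and $\m{\boxtor}_\ast^{\mR\Box\mR^{op}}(\mR,\gmM)$ is computed by the simplicial Mackey functor $\gmM\boxover{\mR\Box\mR^{op}} B_\bullet(\mR,\mR,\mR)$.

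The final step is to identify this simplicial object with $\HC{G}_\bullet(\mR;\gmM)$. The standard cancellation
\[
\gmM\boxover{\mR\Box\mR^{op}}\bigl(\mR\Box\mR^{\Box k}\Box\mR\bigr)\cong\gmM\Box\mR^{\Box k}
\]
absorbs the two outer $\mR$-factors into $\gmM$. Under this isomorphism the interior face maps $d_1,\dots,d_{k-1}$ become multiplications of adjacent $\mR$-factors, $d_0$ becomes the (untwisted) right action of $\mR$ on $\gmM$, and $d_k$ uses the left action of $\mR$ on $\gmM$, which by the very definition of $\gmM$ is the twisted action ${}^g\mu$; after the cyclic rotation $\tau$ that exposes the outermost factor, this matches exactly the $d_k$ of the twisted cyclic nerve. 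The main obstacle throughout is careful bookkeeping --- tracking how the twist on the bimodule translates into precisely the last face map, and verifying the flatness reduction --- rather than any conceptual difficulty, which is why the paper flags this as a \emph{standard homological algebra argument}.
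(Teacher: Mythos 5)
Your proposal is correct and is precisely the "standard homological algebra argument" the paper invokes without writing out: the two-sided bar resolution $B_\bullet(\mR,\mR,\mR)$, flatness of each $\mR^{\Box(k+2)}\cong(\mR\Box\mR^{op})\Box\mR^{\Box k}$ via extension of scalars from the $\mA$-flatness hypothesis, and the identification of $\gmM\boxover{\mR\Box\mR^{op}}B_\bullet$ with the twisted cyclic nerve, with the twist appearing only through the left action on $\gmM$ in the last face map. Nothing is missing.
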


\begin{remark}
Flatness here is a harsher condition than classically, since even localizations need not preserve flatness. For example, for \(G=C_{2}\), inverting \(2\) in the underlying ring of the Burnside Mackey functor produces a localization which is not flat, splitting into a copy of the augmentation ideal of the Burnside Mackey functor (which is not flat) and the constant Mackey functor \(\Z[\tfrac{1}{2}]\). 
\end{remark}

\begin{remark}
It is obvious that we have similar constructions wherein we perturb
both the bimodule structures on $\m{R}$ and on $\mM$; we ignore these,
since they do not seem to play a role in topological Hochschild homology.
\end{remark}

\subsection{Twisted Hochschild Homology}\label{ssec:TwistedHH}

We now turn to our main definition, the construction of the twisted
Hochschild homology for Green functors, in terms of the twisted cyclic nerve.  The definition relies on the
fact that in Mackey functors for cyclic groups (or more generally, a
group in which all subgroups are normal), the Weyl group of $H$ is
always $G/H$.  This easy observation has the consequence that
$\mM(G/H)$ is actually a $G$-module for all subgroups $H$, and the
restriction and transfer maps are actually maps of $G$-modules.  In
particular, objects of $\Mack$ have an action of $G$.

\begin{definition}\label{generator}
Let $G \subset S^1$ be a finite subgroup. Throughout, we will refer to the element $g = e^{2\pi i / |G|} \in S^1$ as the \emph{generator} of $G$.
\end{definition}

For a cyclic group $G$, a choice of generator is equivalent to a
choice of embedding $G \to S^1$.

\begin{definition}
Let $g$ be the generator $e^{2\pi i / |G|} \in S^1$ of $G \subset S^1$. Let $\mR$ be an associative Green functor for $G$. The {\em $G$-twisted Hochschild
  homology}, $\m{\HH}_i^{G}(\m{R})$, is the homology of the twisted
cyclic nerve $\HC{G}_{\bullet}(\mR;{}^{g}\mR)$.
\end{definition}

The homology of a simplicial Mackey functor $\m{M}_{\bullet}$ is defined to be the homology of the associated
normalized dg Mackey functor (see Section \ref{sec:dgandSimpMackey}). When $\mR$ is a commutative Green functor, the twisted cyclic nerve inherits that
structure.

\begin{proposition}
If $\m{R}$ is a commutative Green functor, then $\HC{G}_{\bullet}(\mR)$ is a
simplicial commutative Green functor.
\end{proposition}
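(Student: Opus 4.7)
The plan is to verify the two things required: first, that each simplicial level $\HC{G}_k(\mR)=\mR^{\Box(k+1)}$ carries a natural structure of commutative Green functor, and second, that every face and degeneracy map is a morphism of commutative Green functors. The first claim is immediate from the fact that $(\Mack,\Box,\mA)$ is a symmetric monoidal category, so iterated box products of a commutative monoid are again commutative monoids in a canonical way: the unit is the iterated unit of $\mR$ and the multiplication
\[
\mR^{\Box(k+1)}\Box\mR^{\Box(k+1)}\longrightarrow \mR^{\Box(k+1)}
\]
is built out of the symmetry and $\mu$, and the fact that this composite is associative and unital uses the commutativity of $\mR$ in an essential way.

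For the structure maps I would record the following three inputs, each of which is a map of commutative Green functors. (a) The unit $\eta\colon\mA\to\mR$ is always a map of Green functors. (b) When $\mR$ is commutative, the multiplication $\mu\colon\mR\Box\mR\to\mR$ is itself a map of commutative Green functors; this is the standard fact that multiplication on a commutative monoid in a symmetric monoidal category is a monoid homomorphism. (c) The generator $g\in G$ acts on $\mR$ by a Green functor automorphism: as recalled in the paragraph preceding Definition~\ref{generator}, for cyclic $G$ the Weyl group actions assemble to a $G$-action by endofunctors on $\Mack$, and since $\mR$ is a monoid this action is by Green functor automorphisms. Also, the cyclic rotation $\tau\colon \mR^{\Box(k+1)}\to\mR\Box\mR^{\Box k}$ is a Green functor isomorphism because it is induced from the symmetric monoidal structure.

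With these ingredients the check is essentially a bookkeeping exercise. The degeneracies $s_i$ are of the form $\mathrm{id}^{\Box i}\Box\,\eta\,\Box\,\mathrm{id}^{\Box(k-i)}$, a box product of Green functor maps. The inner face maps $d_i$ for $1\leq i\leq k-1$ are of the form $\mathrm{id}^{\Box i}\Box\,\mu\,\Box\,\mathrm{id}^{\Box(k-1-i)}$ and use (b). The zeroth face map $d_0$ coincides with multiplication between the last and first factors (using the bimodule structure of $\mR$ on itself, which when $\mR$ is commutative is the same as $\mu$), so it is again a Green functor map. Finally $d_k$ is the composite
\[
\mR^{\Box(k+1)}\xrightarrow{\ \tau\ }\mR\Box\mR^{\Box k}\xrightarrow{(g\Box\mathrm{id})} \mR\Box\mR^{\Box k}\xrightarrow{\mu\Box\mathrm{id}} \mR^{\Box k},
\]
which is a composition of Green functor maps by (b), (c), and the preceding remark on $\tau$.

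The only real subtlety — and the one point I would treat carefully rather than by hand-wave — is (c), namely that the twisting by $g$ on the new first factor is a morphism of commutative Green functors, not just a morphism of underlying Mackey functors. This uses both the hypothesis that $G\subset S^{1}$ is cyclic (so that all subgroups are normal and the Weyl group actions globalize to a $G$-action on $\Mack$) and the fact that this $G$-action is by symmetric monoidal autoequivalences, so preserves monoid objects. Once (c) is established, combining it with (a), (b), and the naturality of the simplicial identities (which we have already verified for the underlying Mackey functors) finishes the proof.
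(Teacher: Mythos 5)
Your proposal is correct and follows essentially the same argument as the paper: degeneracies are box products with the unit, inner face maps are Green functor maps because multiplication on a commutative monoid is a monoid homomorphism, and the last face map is one because the $G$-action (Weyl action) is by symmetric monoidal automorphisms, hence by Green functor maps. (One tiny slip: in the paper's convention $d_0$ is the right action map multiplying the zeroth and first factors, not the last and first, but this does not affect your argument since either way it reduces to $\mu$ when $\mR$ is commutative.)
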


\begin{proof}
Since Green functors are the commutative monoids for the box product
of Mackey functors, all of the face maps except the last one are
obviously maps of Green functors. The degeneracy maps are just boxing
with the unit, and hence are also maps of Green functors. Finally,
since all of the structure maps in a Green functor are
$G$-equivariant, the action by an element $g$ is a map of Green
functors; the last face map is also a map of Green functors.
\end{proof}

Applying Lemma~\ref{lem:Tor} gives the following immediate
proposition.

\begin{proposition}\label{Prop:tor}
Let $G \subset S^1$ be a finite subgroup and $g\in G$ the
generator. For any $G$-Green functor $\m{R}$ which can be written as a filtered colimit of projectives, we have a natural
isomorphism
\[
\m{\HH}_i^{G}(\m{R})\cong \m{\boxtor}_i^{\mR\Box
  \mR^{op}}(\m{R},{}^{g}\m{R}),
\]
where $\m{\boxtor}_i$ is the $i$th derived functor of the box product.
\end{proposition}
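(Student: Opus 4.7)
The plan is to derive this proposition as an essentially immediate corollary of Lemma~\ref{lem:Tor}. First, I would unpack definitions. By construction, $\m{\HH}_i^G(\mR)$ is the $i$-th homology of the normalized chain complex associated with the twisted cyclic nerve $\HC{G}_\bullet(\mR;{}^g\mR)$, where $g$ is the distinguished generator $e^{2\pi i/|G|}$ of $G$ and $\mR$ is viewed as an $\mR$-bimodule via its regular action. Lemma~\ref{lem:Tor} then identifies this homology precisely with $\m{\boxtor}_\ast^{\mR\Box\mR^{op}}(\mR,{}^g\mR)$ as soon as $\mR$ is flat over the Burnside Mackey functor $\mA$.

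So the remaining work is to upgrade the hypothesis ``$\mR$ is a filtered colimit of projectives'' to flatness over $\mA$. For this I would invoke two standard facts: projective Mackey functors for a finite group are flat over $\mA$, as noted in the introduction of the paper, and filtered colimits of flat modules are flat, since filtered colimits are exact in $\Mack$ and the box product, being defined as a Kan extension, commutes with colimits in each variable. Combining these, any filtered colimit of projectives is flat over $\mA$, and Lemma~\ref{lem:Tor} applies verbatim with $\mM=\mR$.

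The only conceptual check I would pause on is that the underlying simplicial Mackey functor of $\HC{G}_\bullet(\mR;{}^g\mR)$ coincides with that of the twisted Hochschild complex used to define $\m{\HH}^G$ back in Section~\ref{ssec:TwistedHH}: in both cases the $k$-simplices are $\mR^{\Box(k+1)}$, the inner faces and the degeneracies are the usual ones, and the twist by $g$ only perturbs the $k$-th face map through the twisted bimodule structure ${}^g\mR$, not the underlying objects. Once this identification is in hand, the isomorphism is natural in $\mR$ because both sides are functorially constructed from $\mR$ (on one side via the cyclic nerve construction, on the other via derived box product), and the isomorphism from Lemma~\ref{lem:Tor} is itself natural.

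I do not expect any genuine obstacle. The only mild subtlety is that the flatness condition here is genuinely needed, as emphasized in the remark following Lemma~\ref{lem:Tor}: flatness for Mackey functors is more delicate than in the classical case, which is why the proposition is phrased in terms of filtered colimits of projectives rather than some weaker finiteness condition.
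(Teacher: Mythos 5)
Your proposal is correct and matches the paper's argument, which simply invokes Lemma~\ref{lem:Tor} together with the observation that filtered colimits of projectives are flat. You spell out the flatness verification and the identification of the underlying simplicial object in more detail, but the route is the same.
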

\begin{proof}
This follows directly because filtered colimits of projectives are flat.
\end{proof}

The twisting by a choice of generator can be somewhat confusing. We
find it here most helpful to recast this in a more transparent way for
$\m{R}$ a commutative Green functor.

\begin{proposition}\label{prop:ZHomotopyOrbits}
Let $\m{R}$ be a commutative Green functor for $G$. Let $g\in G$ be the generator, and let $\mathbb Z\to G$ be the
associated homomorphism. Then the natural map 
\[
\mR\to
\HC{G}_{0}\mR
\] 
given by the inclusion of the zero cells extends
to a natural weak equivalence
\[
\m{R}_{h\mathbb Z}\xrightarrow{\simeq} \HC{G}_{\bullet}\mR
\]
between the homotopy orbits of $\m{R}$ in commutative Green functors
to the twisted Hochschild homology.
\end{proposition}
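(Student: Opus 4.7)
The plan is to realize both sides as (equivariant) Loday constructions and identify them on the nose. Write $\Gamma = \Z$ acting on $\m{R}$ through the homomorphism $\Z \to G$ carrying $1$ to the generator $g$; since $\m{R}$ is a commutative Green functor, $\Gamma$ acts by automorphisms in commutative Green functors.

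The category of commutative Green functors is symmetric monoidal under $\Box$ and is tensored over pointed simplicial sets via the Loday construction: for a pointed simplicial set $K_\bullet$ one sets $(\m{R} \otimes K)_n := \m{R}^{\Box (K_n \setminus \ast)}$, with face and degeneracy maps induced from $K$ using the multiplication and unit of $\m{R}$. The standard minimal pointed model of the circle yields $\m{R} \otimes S^1$ canonically isomorphic to the untwisted cyclic nerve $\HC{G}_\bullet(\m{R})$ with trivial twist. The twisted nerve differs only in that $d_n$ applies $g$ to the factor that is rotated to the front before multiplying, and this is exactly the structure obtained by replacing $S^1 = B\Gamma$ by its universal cover $E\Gamma$ and forming the $\Gamma$-equivariant Loday construction $\m{R} \otimes_\Gamma E\Gamma$: whenever the boundary identification in $S^1$ forces two vertices lying in distinct $\Gamma$-translates of a fundamental domain to be glued, the corresponding identification of $\m{R}$-factors is mediated by the deck transformation, which acts on $\m{R}$ by $g$. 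Choosing a small equivariant model of $E\Gamma$ covering the minimal model of $S^1$ produces a level-wise isomorphism $(\m{R} \otimes_\Gamma E\Gamma)_n \cong \m{R}^{\Box (n+1)} = \HC{G}_n(\m{R})$, natural in $\m{R}$ and compatible with all face and degeneracy maps.

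Since $E\Gamma$ is a contractible free $\Gamma$-simplicial set, the equivariant Loday construction $\m{R} \otimes_\Gamma E\Gamma$ is by definition a model for the homotopy orbits $\m{R}_{h\Gamma}$ in commutative Green functors, so the isomorphism above provides the asserted weak equivalence, whose restriction to level zero recovers the inclusion $\m{R} \to \HC{G}_0(\m{R}) = \m{R}$. The main obstacle lies in the homotopy theory of the target: in commutative Green functors the coproduct is $\Box$, so the naive two-sided bar resolution $\Gamma^{\bullet} \Box \m{R}$ has infinite box products at each simplicial level and is not useful as a model. The Loday/tensor framework gives a small explicit resolution, but one must separately justify that $\m{R} \otimes_\Gamma E\Gamma$ does compute the \emph{derived} $\Gamma$-coinvariants; this reduces to showing that $\m{R} \otimes (-)$ sends the $\Gamma$-equivariant weak equivalence $E\Gamma \to \ast$ to a weak equivalence after cofibrant replacement of $\m{R}$ in commutative Green functors, for which flatness of $\m{R}$ over $\mA$ and the fact that the Loday construction preserves levelwise weak equivalences of free $\Gamma$-simplicial sets suffice.
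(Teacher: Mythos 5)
Your identification of the twisted cyclic nerve with $\m{R}\otimes_{\mathbb{Z}}E\mathbb{Z}$ is exactly the paper's argument in different clothing: the paper's proof simply observes that the boundary maps exhibit $\HC{G}_{\bullet}(\m{R})$ as the homotopy coequalizer, in commutative Green functors (where the coproduct is $\Box$), of the identity and $g$ --- which is the same small simplicial model you construct --- and then takes this homotopy coequalizer to be the definition of $\m{R}_{h\mathbb{Z}}$. Given that reading, the concern in your final paragraph (and the flatness hypothesis you invoke there, which is not assumed in the statement) is unnecessary, since no comparison with a larger bar-construction model of the derived coinvariants is being claimed.
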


\begin{proof}
The formula for the boundary maps of the twisted Hochschild chains
shows that \(\HC{G}_{\bullet}(\mR)\) is exactly the homotopy coequalizer in
commutative Green functors of the identity with the generator \(g\) of
\(G\). By definition, this is also the homotopy orbits.
\end{proof}

\begin{remark}
The topological statement is also true, namely the same argument shows
that there is a natural equivalence between $R_{h\bZ}$ and $S^1
\otimes_G R$ for $R$ a commutative ring orthogonal
$G$-spectrum. Choosing different generators corresponds to tensoring
up over different embeddings $G \to S^1$.
\end{remark}

\begin{remark}\label{rem:twistedcoh}
In the equivariant context, what we might mean by topological
Hochschild cohomology becomes more subtle. Classically, this is the
endomorphism spectrum of $R$ as an $(R,R)$-bimodule, and the action on
$\THH$ is obvious. The equivariant structure of $\THH$, however, has an
asymmetry in the roles of $R$ in the two smash factors. In particular,
we see {\emph{both}} an action of $\End_{R^{e}}(R)$ and of
$\End_{R^{e}}(^{g}R)$.  We expect that the most relevant definition
will be guided by the applications; we leave further investigation of
this construction for future work.
\end{remark}

\section{Relative Hochschild homology for Green functors}\label{sec:Relative}

One motivation for studying Hochschild homology for Green functors will be its close relationship with relative topological Hochschild homology. In order to construct an algebraic analog of relative topological Hochschild homology, however, we need a relative version of Hochschild homology for Green functors. In other words, for $K \subset G \subset S^1$ finite subgroups, and $\m{R}$ a Green functor for $K$, we would like to define the $K$-relative $G$-twisted Hochschild homology of $\m{R}$, denoted $\m{\HH}^G_K(\m{R})_*$. 

In order to give the definition of relative twisted Hochschild homology for Green functors, we first need to discuss norms in Mackey functors.

\subsection{$G$-symmetric monoidal structure on $\Mack$}\label{ssec:GSymMackey}

Norms in Mackey functors are closely related to norms in orthogonal $G$-spectra, and we first present a definition of Mackey functor norms that illustrates this relationship. A reader who is less familiar with equivariant stable homotopy theory, however, may wish to start with Definition \ref{def:normhoy} instead.

Both the categories $\Sp_G$ and $\Mack$ have an equivariant enrichment
of the symmetric monoidal product, a $G$-symmetric monoidal category
structure.  Roughly speaking, a $G$-symmetric monoidal structure
consists of coherent multiplicative norms for all subgroups $H \subset
G$.  The $G$-symmetric monoidal structure on $\Sp_G$-spectra was
implicitly introduced by Hill-Hopkins-Ravenel~\cite{HHR} and then
later explicitly codified and studied in
Hill-Hopkins~\cite{HillHopkins}.  See also~\cite{BlumbergHill}
for a discussion for incomplete universes and~\cite{ClanBarwick} for
an $\infty$-categorical treatment.

In analogy with Remark~\ref{rem:Symm}, the $G$-symmetric monoidal
structure on $\Sp_G$ induces a $G$-symmetric monoidal category
structure on $\Mack$.  Specifically, we can define the norms in
$\Mack$ in terms of the norms in $\Sp_G$.

\begin{definition}[{\cite[Definition 5.9]{HillHopkins}}]\label{def:normhhr}
Let $K\subset G$ be a subgroup. If $\mM$ is a $K$-Mackey functor, then
let
\[
N_K^G\mM:=\m{\pi}_0^G (N_K^GH\mM).
\]
Here the superscript on $\m{\pi}_0$ is simply to help the reader keep track of the domain and range of the functor. 
\end{definition}

With this definition, the functor $\m{\pi}_0$ becomes a $G$-symmetric
monoidal functor from the category of $(-1)$-connected $G$-spectra to
Mackey functors. We can extend the norms to arbitrary finite $G$-sets
using the decomposition of a finite $G$-set into a disjoint union of
orbits:

\begin{definition}
If $H_1,\dots, H_n$ are subgroups of $G$ and $\mM$ is a $G$-Mackey
functor, then let
\[
N^{G/H_1\amalg\dots\amalg G/H_n} \mM := \big(N_{H_1}^G i_{H_1}^\ast
\mM\big)\Box\dots\Box \big(N_{H_n}^G i_{H_n}^\ast \mM\big).
\]
\end{definition}
For a finite $K$-set $T$, let $F_K(G,T)$ denote the $G$-set of $K$-equivariant maps from $G$ to $T$. The $G$-action is given by $(g f)(g') = f(g'g)$. 
\begin{proposition}\label{prop:normcalc}
For any finite $K$-set $T$, we have an isomorphism
\[
N_K^G(\mA^K_T)\cong\mA^G_{F_K(G,T)}.
\]
\end{proposition}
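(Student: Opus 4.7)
The plan is to reduce the computation to the behavior of the spectral norm on suspension spectra, using Definition~\ref{def:normhhr} and the fact that a representable Mackey functor is the zeroth homotopy Mackey functor of a suspension spectrum. Concretely, I want to show that both sides of the asserted isomorphism are canonically identified with $\m{\pi}_0^G(\Sigma^\infty_+ F_K(G,T))$.

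\textbf{Step 1: Identify representables as $\m{\pi}_0$ of suspension spectra.} I first verify that there is a natural isomorphism
\[
\mA^K_T \cong \m{\pi}_0^K(\Sigma^\infty_+ T).
\]
Since both sides are Mackey functors, it suffices to compare their values on orbits $K/H$. On the left, $\mA^K_T(K/H) = \cA^K(T,K/H)$ is the Grothendieck group of spans $T \leftarrow U \to K/H$ of finite $K$-sets; under the standard equivalence between finite $K$-sets over $K/H$ and finite $H$-sets, this is the Grothendieck group of finite $H$-sets over $i_H^\ast T$, which is precisely $\pi_0^H(\Sigma^\infty_+ T)$. Naturality in $K/H$ follows from naturality of this adjunction. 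The same identification, applied to the $G$-set $F_K(G,T)$, will give $\mA^G_{F_K(G,T)} \cong \m{\pi}_0^G(\Sigma^\infty_+ F_K(G,T))$.

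\textbf{Step 2: Pass the norm through $\m{\pi}_0$.} By Definition~\ref{def:normhhr}, $N_K^G \mA^K_T = \m{\pi}_0^G(N_K^G H\mA^K_T)$. I want to replace the Eilenberg--Mac Lane spectrum $H\mA^K_T$ by the suspension spectrum $\Sigma^\infty_+ T$, which has the same $\m{\pi}_0^K$. The Postnikov $0$-truncation gives a map $\Sigma^\infty_+ T \to H\mA^K_T$ of $(-1)$-connected cofibrant $K$-spectra inducing an isomorphism on $\m{\pi}_0^K$. Since the $G$-symmetric monoidal norm descends to a well-defined functor on the heart (this is precisely the content of the discussion preceding Definition~\ref{def:normhhr}, built from the compatibility in Proposition~\ref{prop:pizerosym}), its effect on $\m{\pi}_0$ depends only on $\m{\pi}_0$ of the input. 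Hence
\[
\m{\pi}_0^G(N_K^G H\mA^K_T) \cong \m{\pi}_0^G(N_K^G \Sigma^\infty_+ T).
\]

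\textbf{Step 3: Compute the norm of a suspension spectrum.} The spectral norm satisfies the standard formula $N_K^G(\Sigma^\infty_+ T) \simeq \Sigma^\infty_+ N_K^G T$ where the set-theoretic norm $N_K^G T$ is the $G$-set $F_K(G,T)$ of $K$-equivariant maps $G\to T$. Combining with Steps 1 and 2 gives
\[
N_K^G \mA^K_T \cong \m{\pi}_0^G(\Sigma^\infty_+ F_K(G,T)) \cong \mA^G_{F_K(G,T)},
\]
which is the desired isomorphism, and which is natural in $T$ by the naturality of each step.

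The main obstacle is Step 2: justifying rigorously that the spectral norm, which is not a homotopy functor in general, does induce a well-defined functor on Mackey functors that can be computed using any convenient $(-1)$-connected cofibrant representative. This is a foundational input from the theory of $G$-symmetric monoidal structures on $\Sp_G$ and its heart, and here one either cites the HHR/Hill--Hopkins framework or else circumvents the issue entirely by matching universal properties using the Kan-extension style definition of the Mackey functor norm given later in this section, combined with the Yoneda lemma: $N_K^G \mA^K_T$ is characterized by its mapping-out property, and one checks that $\mA^G_{F_K(G,T)}$ satisfies the same one.
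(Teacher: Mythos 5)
Your proof is correct and follows essentially the same route as the paper's: both reduce the computation to the formula $N_K^G(\Sigma^{\infty} T_+)\cong\Sigma^{\infty} F_K(G,T)_+$ for suspension spectra and then use a connectivity (Postnikov) argument to transfer this across Definition~\ref{def:normhhr}. The only cosmetic difference is that the paper writes $H\mA^K_T\simeq\Sigma^{\infty}T_+\wedge H\mA^K$ and invokes strong symmetric monoidality of the norm so that the $0$-connected-fiber comparison is needed only for $S^0\to H\mA$, whereas you apply that comparison directly to $\Sigma^{\infty}_+T\to H\mA^K_T$ after identifying $\mA^K_T$ with $\m{\pi}_0^K(\Sigma^{\infty}_+T)$.
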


\begin{proof}
We have a natural equivalence of $K$-spectra
\[
H\mA^K_T\simeq \Sigma^{\infty}T_+\wedge H\mA^K.
\]
Since the norm is strong symmetric monoidal, we have equivalences
\[
N_K^G(H\mA^K_T) \htp N_K^G(\Sigma^{\infty}T_+ \sma H\mA^K) \cong
N_K^G(\Sigma^{\infty}T_+)\wedge N_K^G H\mA^K.
\]
By construction, we have an equivariant isomorphism
\[
N_K^G(\Sigma^{\infty} T_+)\cong \Sigma^{\infty} F_K(G,T)_+.
\]
The result follows once we identify $N_K^G \mA^K$. For this, we recall
that the Burnside Mackey functor $\mA$ is the zeroth homotopy group of
the zero sphere, so the fiber of the Postnikov section $S^0\to H\mA$
is $0$-connected. In particular, we know that
\[
\m{\pi}_0 N_K^G S^0\cong \m{\pi}_0 N_K^G H\mA^K.
\]
By construction $N_K^G S^0\simeq S^0$, from which the desired result
follows.
\end{proof}

Since the norm in spectra commutes with sifted colimits,
Proposition~\ref{prop:normcalc} can be used (very inefficiently) to
compute the norm of any Mackey functor.  However, a direct expression
for the norm of Mackey functors is often more useful.  The thesis work
of Mazur and Hoyer produced purely algebraic constructions of the norm
on Mackey functors~\cite{MazurArxiv,HoyerThesis}.  Mazur's
construction is analogous to Lewis' description of the box product of
Mackey functors and gives an explicit construction of the norm for
cyclic $p$-groups. Hoyer generalized this, giving a functorial
description of the norm for any finite group $G$.

\begin{definition}[{\cite[2.3.2]{HoyerThesis}}]\label{def:normhoy}
If $\mM$ is an $H$ semi-Mackey functor, then let $N_H^G\mM$ be the left Kan
extension of $\mM$ along the coinduction functor $F_H(G,-)$ on finite \(H\)-sets: 
\[
\xymatrix{ {\cA_{\N}^H}\ar[r]^{\mM}\ar[d]_{F_H(G,-)} & {\Set.}
  \\ {\cA_{\N}^G}\ar[ur]_{N_H^G\mM} }
\]
\end{definition}

\begin{proposition}[{\cite[2.3.6]{HoyerThesis}}]
If \(\mM\) is an \(H\)-Mackey functor, then \(N_{H}^{G}\mM\) is a \(G\)-Mackey functor.
\end{proposition}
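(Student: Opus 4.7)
The plan is to reduce to representable Mackey functors and appeal to the colimit-preservation properties of left Kan extensions. First I would compute \(N_H^G\) on a representable \(\mA^H_T = \cA^H_{\N}(T,-)\). By the universal property of the left Kan extension,
\[
\Hom(N_H^G \mA^H_T, \mN) \cong \Hom\big(\mA^H_T,\ \mN \circ F_H(G,-)\big) \cong \mN\big(F_H(G,T)\big)
\]
for any \(\Set\)-valued functor \(\mN\) on \(\cA^G_{\N}\), where the last isomorphism is Yoneda. A further application of Yoneda identifies \(N_H^G \mA^H_T\) with the representable \(\mA^G_{F_H(G,T)}\), which is manifestly a \(G\)-Mackey functor. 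This is the algebraic shadow of Proposition~\ref{prop:normcalc}.

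Next I would write a general \(H\)-Mackey functor \(\mM\) as a reflexive coequalizer
\[
\coprod_j \mA^H_{T_j} \rightrightarrows \coprod_i \mA^H_{S_i} \longrightarrow \mM
\]
of representables. Since left Kan extensions preserve all colimits of \(\Set\)-valued functors, applying \(N_H^G\) yields a reflexive coequalizer diagram on \(\cA^G_{\N}\) whose first two terms are the representable \(G\)-Mackey functors \(\coprod_i \mA^G_{F_H(G,S_i)}\) and \(\coprod_j \mA^G_{F_H(G,T_j)}\).

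The main technical step, which I expect to be the crux, is to show that this pointwise reflexive coequalizer is itself a \(G\)-Mackey functor rather than merely a set-valued functor on the Burnside category. The key input is that semi-Mackey functors are the models for a finite-product Lawvere theory, namely the opposite of the Burnside category, so reflexive coequalizers of semi-Mackey functors are computed pointwise in \(\Set\); equivalently, reflexive coequalizers are sifted and commute with finite products. Applied orbitwise this shows that \(N_H^G\mM\) preserves disjoint unions of finite \(G\)-sets and is therefore a semi-Mackey functor. The abelian-group structure on its values follows by running the identical sifted-colimit argument in \(\Ab\), using that the representable Mackey functors in the resolution have abelian group values and that reflexive coequalizers of abelian groups agree with those of underlying sets. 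Hence \(N_H^G\mM\) is a \(G\)-Mackey functor.
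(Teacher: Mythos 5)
The paper offers no proof of this statement --- it is imported wholesale from Hoyer's thesis --- so your argument has to stand on its own, and as written it does not. The first problem is in your base case. Your Yoneda computation is valid for the semi-Mackey representable \(\cA^H_{\N}(T,-)\) and correctly identifies its left Kan extension with \(\cA^G_{\N}(F_H(G,T),-)\); but that functor is \emph{not} ``manifestly a \(G\)-Mackey functor.'' Its values are the span monoids \(\cA^G_{\N}(F_H(G,T),U)\), which are free commutative monoids, not groups. The genuine Mackey representable \(\mA^H_T=\cA^H(T,-)\) of Definition~\ref{defn:Representables} is the objectwise group completion of \(\cA^H_{\N}(T,-)\) and is \emph{not} a representable object of \(\Set\)-valued functors on \(\cA^H_{\N}\), so its left Kan extension is not computed by Yoneda. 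The identification \(N_H^G\mA^H_T\cong\mA^G_{F_H(G,T)}\) with the group-completed target (Proposition~\ref{prop:NormofProjectives}) --- equivalently, the fact that this Kan extension commutes with objectwise group completion --- is essentially the content of the proposition, and your proof assumes it. This same slippage sinks your last step: the free objects appearing in the canonical resolution of a semi-Mackey functor do \emph{not} ``have abelian group values,'' so there is no ``identical argument in \(\Ab\)'' to run, and the group-valuedness of \(N_H^G\mM\) --- the part of Hoyer's 2.3.6 that goes beyond the semi-Mackey statement --- is never established.

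The second problem is that the norm does not commute with coproducts of (semi-)Mackey functors, so the normed resolution does not have the terms you claim. The norm is multiplicative: already \(N_H^G(\mA^H_S\oplus\mA^H_{S'})\cong\mA^G_{F_H(G,S\sqcup S')}\), and \(F_H(G,S\sqcup S')\) decomposes by the exponential formula into a disjoint union indexed by maps \(H\backslash G\to\{1,2\}\), not into \(F_H(G,S)\sqcup F_H(G,S')\). (If your \(\coprod\) instead means the objectwise disjoint union of \(\Set\)-valued functors, the Kan extension does preserve it, but then the terms are no longer product-preserving and the sifted-colimit argument gives you nothing about the coequalizer.) The additivity half of your strategy is salvageable: write the infinite coproducts as filtered colimits of finite coproducts, each of which is a single representable, so that \(\mM\) is an objectwise sifted colimit of representables, whose norms are representables, and sifted colimits of product-preserving functors are product-preserving. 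But the group-valuedness half cannot be repaired without the missing input on a single group-completed representable, which is exactly why both the paper and Hoyer treat this as a theorem requiring proof rather than a formal consequence of Kan-extension yoga.
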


\begin{remark}
Although coinduction is the right adjoint to the forgetful functor on
finite $G$-sets, it is not so on the Burnside category. Moreover, it
is not a product preserving functor on the Burnside category. In
particular, the Hoyer norm is not a left adjoint on Mackey functors.
\end{remark}

Since the Hoyer norm is a left Kan extension, we again have a canonical identification on representable functors. Composing with group completion then gives the following.
\begin{proposition}\label{prop:NormofProjectives}
If T is a finite \(H\)-set, then we have a canonical isomorphism of Mackey functors
\[
N_H^G \mA^H_T\cong \mA^G_{F_H(G,T)}.
\]
\end{proposition}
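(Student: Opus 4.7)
The plan is to reduce the claim to the general categorical fact that a left Kan extension of a covariant representable functor along a functor $F$ is again representable, specifically at $F$ applied to the representing object. Unpacked in the present setting, this says
\[
\Lan_{F_H(G,-)} \cA_{\N}^H(T,-) \;\cong\; \cA_{\N}^G\big(F_H(G,T),-\big),
\]
which is exactly the formula we want at the level of semi-Mackey functors.

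First I would establish the general fact by invoking the universal property of $\Lan_F$ together with the Yoneda lemma twice. Given any functor $\mN$ out of the target category, we have natural bijections
\[
\Hom\big(\Lan_F \cA_{\N}^H(T,-),\,\mN\big) \cong \Hom\big(\cA_{\N}^H(T,-),\,\mN\circ F\big) \cong \mN(F(T)) \cong \Hom\big(\cA_{\N}^G(F(T),-),\,\mN\big),
\]
the first by the adjunction defining $\Lan_F$, and the second and third by Yoneda applied in the source and target respectively. Since this is natural in $\mN$, Yoneda once more yields the desired isomorphism of functors. This is the main step and the only non-formal content; the only thing to verify is that the above is natural in the ambient functor $\mN$, which is immediate.

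Next I would specialize to $F = F_H(G,-)\colon \cA_{\N}^H \to \cA_{\N}^G$ and feed this into Hoyer's definition (Definition \ref{def:normhoy}) of $N_H^G$. This produces the claimed isomorphism at the level of product-preserving $\Set$-valued functors, i.e.\ at the semi-Mackey level. Finally, I would upgrade the isomorphism from semi-Mackey functors to Mackey functors by group completion: since $\mA^H_T = \cA^H(T,-)$ is the pointwise group completion of the semi-Mackey representable $\cA_{\N}^H(T,-)$, and group completion (being a left adjoint from commutative monoids to abelian groups) commutes with the colimits defining the left Kan extension, the isomorphism above descends to an isomorphism $N_H^G \mA^H_T \cong \mA^G_{F_H(G,T)}$ in $\Mack$. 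The only potential obstacle is this compatibility between group completion and the Kan extension, and it is handled by the general principle that left adjoints commute, together with the fact (cited as \cite[2.3.6]{HoyerThesis}) that $N_H^G$ of a Mackey functor is automatically a Mackey functor.
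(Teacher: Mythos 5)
Your argument is correct and is essentially the paper's own proof: the text justifies this proposition with exactly the observation that the left Kan extension of a representable along $F_H(G,-)$ is the representable at $F_H(G,T)$, ``composing with group completion.'' The one step you flag --- compatibility of the $\Set$-valued Kan extension with levelwise group completion --- is also the step the paper delegates to Hoyer, and citing \cite[2.3.6]{HoyerThesis} there is the right move, since the ``left adjoints commute'' heuristic is not literally a proof (the Kan-extension colimits live in $\Set$, and the norm is not additive, so its interaction with group completion is genuinely a theorem rather than formal nonsense).
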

This agrees with the construction in terms of the norm on spectra, as both are expressing a distributive law.
Hoyer showed that this is true very generally, building on work of
Ullman.

\begin{theorem}[{\cite[2.3.7]{HoyerThesis}}]\label{thm:hoyer}
There is a canonical isomorphism between $N_H^G$ as defined in
Definition~\ref{def:normhhr} and Definition~\ref{def:normhoy} on
Mackey functors and on Green functors.
\end{theorem}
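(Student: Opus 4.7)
The plan is to check that the two norm functors agree on a generating family in $\Mack^H$ and then extend by a density argument. Every semi-Mackey functor $\mM$ is the canonical colimit of the diagram $\{\mA^H_T \to \mM\}$ of representables mapping to it, and in fact can be presented as a reflexive coequalizer of direct sums of representables. So it suffices to verify (i) the two definitions of $N_H^G$ agree on representables $\mA^H_T$ naturally in $T\in \cA_{\N}^H$, and (ii) both constructions preserve the colimits used to present arbitrary Mackey functors from representables.

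For (i), Proposition~\ref{prop:normcalc} computes the HHR norm of $\mA^H_T$ to be $\mA^G_{F_H(G,T)}$, and Proposition~\ref{prop:NormofProjectives} gives the same formula for the Hoyer norm. Both identifications are natural in morphisms of $\cA_{\N}^H$, so the two functors $T\mapsto N_H^G\mA^H_T$ are canonically isomorphic as functors from $\cA_{\N}^H$ to $\Mack^G$.

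For (ii), the Hoyer norm is by construction a left Kan extension of set-valued functors along $F_H(G,-)\colon \cA_{\N}^H\to \cA_{\N}^G$, so at the level of $\sMack$ it is left adjoint to restriction along coinduction. In particular it preserves arbitrary coproducts and reflexive coequalizers of semi-Mackey functors; since group completion also preserves these colimits, the same holds in $\Mack$. The HHR norm $\m{\pi}_0^G\circ N_H^G\circ H$ is less obviously colimit-preserving, but one reduces to checking that the Eilenberg-MacLane functor $H$ sends the sifted presentation of $\mM$ to the corresponding sifted homotopy colimit in $(-1)$-connected orthogonal $H$-spectra, that the spectral norm preserves sifted homotopy colimits of cofibrant connective spectra by the work of Hill--Hopkins--Ravenel, and that $\m{\pi}_0$ commutes with sifted homotopy colimits of connective $G$-spectra.

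Combining (i) and (ii), the agreement on representables extends via the universal property of the Hoyer norm as a left Kan extension to a canonical natural transformation between the two norm constructions on all of $\Mack^H$; this transformation is an isomorphism on representables by step (i) and hence on all Mackey functors by step (ii). The main obstacle will be the colimit-preservation in step (ii) for the HHR norm, since spectral norms are not left adjoints and one must verify that the requisite sifted homotopy colimits survive passage to $\m{\pi}_0$. For the Green functor refinement, both norms are strong symmetric monoidal (the Hoyer side via the distributive-law property of coinduction, the HHR side because $\m{\pi}_0$ is strong symmetric monoidal on connective spectra by Proposition~\ref{prop:pizerosym}), so the natural isomorphism constructed for Mackey functors automatically intertwines the monoid structures and descends to Green functors.
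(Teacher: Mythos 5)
The paper does not actually prove this statement --- it is quoted from Hoyer's thesis, and the surrounding text (``Since the norm in spectra commutes with sifted colimits, Proposition~\ref{prop:normcalc} can be used\dots to compute the norm of any Mackey functor'') indicates that the intended argument is essentially your strategy of checking representables and extending along colimits. However, your write-up has two genuine gaps. First, in step (i) the crucial point is not the object-level identification $N_H^G\mA^H_T\cong\mA^G_{F_H(G,T)}$, which Propositions~\ref{prop:normcalc} and~\ref{prop:NormofProjectives} do supply, but its naturality with respect to \emph{all} morphisms of $\cA_{\N}^H$, in particular the transfer (wrong-way) legs of spans. On the Hoyer side this naturality is automatic from the Kan-extension description; on the spectral side it amounts to computing the norm of a transfer map, which involves a distributivity/double-coset formula and is exactly the content of Ullman's work that Hoyer's proof builds on. Asserting that ``both identifications are natural'' skips the substance of the theorem: two colimit-preserving functors that agree objectwise on a dense family need not be isomorphic unless they agree on the morphisms appearing in the canonical presentation, and those morphisms include transfers.

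Second, step (ii) is wrong as stated for the Hoyer norm. The paper explicitly remarks after Definition~\ref{def:normhoy} that coinduction is not product-preserving on the Burnside category and that the Hoyer norm is \emph{not} a left adjoint on Mackey functors; correspondingly neither norm preserves direct sums (norms are multiplicative rather than additive, and $N_H^G(\mM\oplus\mN)$ contains cross terms). So a presentation of $\mM$ as a reflexive coequalizer of direct sums of representables does not transport along either functor. The fix is to use the canonical density colimit of $\mM$ over the category of representables mapping to it, which is a \emph{sifted} colimit because the representables are closed under finite direct sums ($\mA_T\oplus\mA_{T'}\cong\mA_{T\sqcup T'}$); both norms do preserve sifted colimits, the spectral one after the homotopical bookkeeping you already flag. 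With that substitution, and with the transfer-naturality in step (i) actually supplied, your outline becomes the standard proof.
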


From this perspective, it is straightforward to verify the properties
of the $G$-symmetric monoidal structure on $\Mack$.

\begin{proposition}\label{prop:Composite}
If $K\subset H\subset G$, then we have a natural isomorphism of
functors
\[
N_H^G\circ N_K^H\cong N_K^G.
\]
\end{proposition}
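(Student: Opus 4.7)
The plan is to use the Hoyer description (Definition~\ref{def:normhoy}) of the norm as a left Kan extension along coinduction, $N_K^G\mM = \Lan_{F_K(G,-)}\mM$. Once set up this way, the statement reduces to two inputs: left Kan extensions compose, and coinduction on equivariant sets is transitive.

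Given those two inputs, the proof is formal:
\[
N_H^G N_K^H\mM = \Lan_{F_H(G,-)}\Lan_{F_K(H,-)}\mM \cong \Lan_{F_H(G,-)\circ F_K(H,-)}\mM \cong \Lan_{F_K(G,-)}\mM = N_K^G\mM.
\]
The first isomorphism is the standard composition rule $\Lan_{g\circ f}\cong \Lan_g\circ\Lan_f$. The second rests on a natural isomorphism of functors $F_H(G,-)\circ F_K(H,-)\cong F_K(G,-)$ from $\cA_\N^K$ to $\cA_\N^G$. At the level of underlying finite equivariant sets, this is the classical transitivity of coinduction: the natural bijection sends an $H$-equivariant map $\varphi\colon G\to F_K(H,T)$ to the $K$-equivariant map $g\mapsto \varphi(g)(e)\in T$.

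The main obstacle is upgrading this transitivity from the underlying categories of finite equivariant sets to the Burnside categories $\cA_\N^?$. The coinduction functors on Burnside categories are not the naive span-wise extensions; instead they encode the exponential (distributive) law developed by Tambara and systematized in Ullman's and Hoyer's work, and compatibility with span composition must be checked. I expect this to reduce to observing that iterated dependent-product diagrams compose, which in turn should follow from the universal property of the distributive extension plus the pullback stability of coinduction on finite sets.

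An alternative route, if the Burnside-level verification proves awkward, is to fall back on the spectrum-level definition (Definition~\ref{def:normhhr}) and invoke Theorem~\ref{thm:hoyer}: the analogous composition law $N_H^G\circ N_K^H\simeq N_K^G$ for norms of orthogonal $G$-spectra is established in Hill--Hopkins--Ravenel, and since $H\mM$ is $(-1)$-connected, $N_K^H H\mM$ is again $(-1)$-connected with $\m{\pi}_0^H = N_K^H\mM$, so applying $N_H^G$ and taking $\m{\pi}_0^G$ yields $N_H^G N_K^H\mM \cong N_K^G\mM$. This topological route trades the combinatorial check on spans for a standard fact about equivariant spectra.
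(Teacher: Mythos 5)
Your primary route is exactly the paper's proof: the paper defines $N_K^G$ as the left Kan extension along $F_K(G,-)$ and deduces the result from the natural isomorphism $F_H\bigl(G,F_K(H,-)\bigr)\cong F_K(G,-)$ together with the composability of left Kan extensions. The Burnside-category compatibility you flag as a potential obstacle is simply asserted (not verified) in the paper, so your write-up is, if anything, more careful about where the real content lies.
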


\begin{proof}
Since we have a natural isomorphism of functors
\[
F_H\big(G,F_K(H,-)\big)\cong F_K(G,-),
\]
we have a natural isomorphism of the corresponding left Kan
extensions.
\end{proof}

We can similarly understand the composition of the restriction functor
with the norm.  This is closely connected with the external
description of a Tambara functor as a $G$-commutative monoid in the
category of Mackey functors \cite{MazurArxiv, HoyerThesis}, as it describes the
externalized Weyl action. First we need a standard, categorical definition.

\begin{definition}
If \(K\) is a finite group, then let \(BK\) be the category with one object \(\ast\) and with the morphisms from \(\ast\) to itself given by \(K\), with composition the multiplication in \(K\).

If \(\cC\) is a category, then a \(K\)-object in \(\cC\) is a functor \(BK\to\cC\).
\end{definition}
In particular, a \(K\)-object in \(G\)-Mackey functors is a \(G\)-Mackey functor \(\mM\) such that for each subgroup \(H\), \(\mM(G/H)\) is a \(K\)-module, and all restriction, transfer, and conjugation maps are maps of \(K\)-modules. A \(K\)-object in \(G\)-Green functors moreover requires the multiplication be a \(K\)-equivariant map.

\begin{proposition}\label{prop:LanGmodH}
If $\m{M}$ is a $G$-Mackey functor, then for all $H\subset G$,
$N_H^Gi_H^\ast\mM$ is a $W_G(H)$-object in $G$-Mackey functors.

If $\m{R}$ is a $G$-Green functor, then $N_H^G i_H^\ast \m{R}$ is a
$W_G(H)$-object in $G$-Green functors.
\end{proposition}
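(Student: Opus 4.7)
The plan is to construct the $W_G(H)$-action on $N_H^G i_H^* \mM$ by functoriality of the norm, starting from canonical twist data on $i_H^* \mM$ in $H$-Mackey functors. For each $n \in N_G(H)$, conjugation by $n$ gives an automorphism $c_n\colon H \to H$, hence an autoequivalence $c_n^*$ of $\cA_\N^H$ and (by precomposition) of $\Mack_H$. Using the ambient $G$-Mackey structure of $\mM$, the conjugation maps $c_n\colon \mM(G/K) \to \mM(G/nKn^{-1})$ assemble into a natural isomorphism $\alpha_n\colon i_H^* \mM \xrightarrow{\cong} c_n^* i_H^* \mM$ in $\Mack_H$. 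Separately, the coinduction $F_H(G,-)$ is invariant under replacing an $H$-set $T$ by its $c_n$-twist $c_n^* T$, via the $G$-set map $f \mapsto \bigl(g \mapsto f(gn)\bigr)$; applied to Hoyer's formula (Definition~\ref{def:normhoy}) this gives a natural iso $\beta_n\colon N_H^G c_n^* \mN' \cong N_H^G \mN'$ for $\mN' \in \Mack_H$. Setting $\phi_n := \beta_n \circ N_H^G(\alpha_n)$ produces the desired automorphism of $N_H^G i_H^* \mM$ in $\Mack_G$.

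Next I would verify two things: (i) $\phi_n \phi_m = \phi_{nm}$ as $G$-Mackey functor automorphisms, and (ii) $\phi_n = \textrm{id}$ for $n \in H$, so the action descends to $W_G(H)$. Point (i) reduces to the compositional compatibility of the conjugation maps in $\mM$ (for $\alpha$) and of the coinduction invariance isos (for $\beta$). For (ii), when $n \in H$ the conjugation $c_n$ is inner, and $c_n^*$ is canonically naturally isomorphic to the identity on $\cA_\N^H$ via left multiplication by $n$ on $H$-sets; under this identification both $\alpha_n$ and $\beta_n$ become identities, so $\phi_n$ is the identity endomorphism of the underlying $G$-Mackey functor.

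For the Green functor case, I would note that $\alpha_n$ is multiplicative (the $G$-action on $\mR$ is by ring maps, so the conjugation maps $c_n$ respect the box-product multiplication) and $\beta_n$ is monoidal (it is built from $G$-set relabelings, and $N_H^G$ is strong symmetric monoidal by Theorem~\ref{thm:hoyer} and Proposition~\ref{prop:normcalc}). Hence each $\phi_n$ is a morphism of $G$-Green functors.

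The main obstacle is the coherence bookkeeping in (i) and (ii): although essentially formal, it requires carefully juggling several natural isomorphisms and the canonical identifications $c_n^* \cong \textrm{id}$ for inner $c_n$. A cleaner alternative is to factor $N_H^G = N_{N_G(H)}^G \circ N_H^{N_G(H)}$ using Proposition~\ref{prop:Composite} and first treat the case $H \trianglelefteq N_G(H)$, where a $(N_G(H)/H)$-action on $N_H^{N_G(H)} i_H^{N_G(H)}(-)$ is manifest from the $N_G(H)$-Mackey structure of $i_{N_G(H)}^* \mM$ (elements of $N_G(H)/H$ really do act by honest automorphisms of $H$-Mackey functors in this case), and then transport the action through the outer norm $N_{N_G(H)}^G$ by functoriality.
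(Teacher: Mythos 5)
Your plan is correct, but it takes a genuinely different route from the paper. The paper's proof is essentially a one-liner: since $i_H^\ast$, $N_H^G$, and $\Box$ are all left Kan extensions, the composite $N_H^G i_H^\ast$ is canonically identified with $\Lan_{F(G/H,-)}$, the left Kan extension along the single endofunctor $T\mapsto F(G/H,T)$ of the $G$-Burnside category; the group $\mathrm{Aut}_G(G/H)\cong W_G(H)^{\op}$ then acts by precomposition in the $G/H$ slot, and the compatibility with the strong symmetric monoidal structure of $F(G/H,-)$ handles the Green functor case. That identification is precisely the device that makes all of your coherence bookkeeping automatic: your $\phi_n=\beta_n\circ N_H^G(\alpha_n)$ is what you get by unwinding the action of an automorphism of $G/H$ through the factorization $F_H(G,-)\circ i_H^\ast$, with $\alpha_n$ carrying the conjugation data of $\mM$ and $\beta_n$ the relabeling of coinduction. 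Your points (i) and (ii) — the cocycle condition and triviality of inner conjugations, using $c_n^\ast\cong\mathrm{id}$ for $n\in H$ — are exactly the checks the paper avoids, and while they are formal and will go through, they are the real content of the generator-by-generator approach, so a complete write-up would need to carry them out (being careful about whether the conventions produce a left or a right $W_G(H)$-action; either yields a $W_G(H)$-object). Your proposed alternative via Proposition~\ref{prop:Composite}, factoring $N_H^G=N_{N_G(H)}^G\circ N_H^{N_G(H)}$ and transporting the manifest $N_G(H)/H$-action through the outer norm, is a sensible middle ground, though for the cyclic groups that dominate this paper $N_G(H)=G$ and it collapses to the case you already treated. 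In short: same action, different construction; the paper's Kan-extension packaging buys freedom from coherence checks, while yours makes the action concrete on elements of $N_G(H)$.
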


\begin{proof}
By definition, the functors $N_H^G$ and $-\Box-$ are left Kan
extensions. The functor $i_H^\ast$ is so too: $i_H^\ast$ is the left
Kan extension of the restriction functor $i_H^\ast$ on the Burnside
category. This means we have a natural isomorphism of functors
\[
N_H^Gi_H^\ast(-)=\Lan_{F_H(G,-)}\Lan_{i_H^\ast}(-)\cong
\Lan_{F_H(G,i_H^\ast(-))}(-).
\]
We have a natural isomorphism of functors
\[
F_H(G,i_H^\ast(-))\cong F(G/H,-),
\]
and this isomorphism is compatible with the isomorphisms showing that
$F_H(G,-)$ and $F(G/H,-)$ are strong symmetric monoidal functors. In
particular, the isomorphism of left Kan extensions on Mackey functors
gives an isomorphism of left Kan extensions on Green functors.

The automorphism group $Aut_G(G/H)\cong W_G(H)^{op}$ acts on the right
on the functor $\Lan_{F(G/H,-)}(-)$ via precomposition, and this gives
us a natural action of $W_G(H)$ on the composite functor, as desired.
\end{proof}

Combining Proposition~\ref{prop:Composite} and
Proposition~\ref{prop:LanGmodH} yields the following corollary.

\begin{corollary}
If $\mM$ is a $G$-Mackey functor and we have a chain of groups
$H\subset G\subset G'$, then the $G'$-Mackey functor
\[
N_H^{G'} i_H^\ast\mM
\]
is naturally a $W_G(H)$-object. If $\mM$ is a Green functor, then this
is again a $W_G(H)$-object in Green functors.
\end{corollary}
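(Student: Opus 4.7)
The plan is to combine the two preceding results. By Proposition~\ref{prop:Composite}, we have a canonical natural isomorphism
\[
N_H^{G'} i_H^\ast \mM \cong N_G^{G'}\bigl(N_H^G i_H^\ast \mM\bigr),
\]
so it suffices to produce a $W_G(H)$-action on the $G'$-Mackey functor obtained by applying $N_G^{G'}$ to $N_H^G i_H^\ast \mM$.

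By Proposition~\ref{prop:LanGmodH}, the $G$-Mackey functor $N_H^G i_H^\ast \mM$ is naturally a $W_G(H)$-object in $G$-Mackey functors, i.e., it comes equipped with a functor $BW_G(H) \to \Mack_G$. The next step is simply to postcompose this functor with $N_G^{G'} \colon \Mack_G \to \Mack_{G'}$ to obtain a functor $BW_G(H) \to \Mack_{G'}$, which is by definition a $W_G(H)$-object in $G'$-Mackey functors. Since the isomorphism of Proposition~\ref{prop:Composite} is natural, this is the desired $W_G(H)$-structure on $N_H^{G'} i_H^\ast \mM$.

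For the Green functor case, the same argument applies, using that Proposition~\ref{prop:LanGmodH} produces a $W_G(H)$-object in $G$-Green functors when $\mM$ is a Green functor, and that the norm $N_G^{G'}$ is a strong symmetric monoidal functor (either by Theorem~\ref{thm:hoyer} together with the symmetric monoidal property of the spectrum-level norm, or directly since the left Kan extension defining $N_G^{G'}$ is along a strong symmetric monoidal functor $F_G(G',-)$). Thus $N_G^{G'}$ sends the $BW_G(H)$-diagram in $G$-Green functors to a $BW_G(H)$-diagram in $G'$-Green functors, exhibiting $N_H^{G'} i_H^\ast \mM$ as a $W_G(H)$-object in Green functors.

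There is no real obstacle here: the statement is a direct formal consequence, with the only point requiring mild care being the verification that the functoriality of $N_G^{G'}$ in the symmetric monoidal setting is available, which is precisely the content of Hoyer's identification recorded in Theorem~\ref{thm:hoyer}.
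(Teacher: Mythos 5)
Your proposal is correct and is exactly the argument the paper intends: the corollary is stated as an immediate consequence of combining Proposition~\ref{prop:Composite} (to rewrite $N_H^{G'} i_H^\ast\mM$ as $N_G^{G'}$ applied to $N_H^G i_H^\ast\mM$) with Proposition~\ref{prop:LanGmodH} (to get the $W_G(H)$-action), and the paper gives no further proof. Your spelled-out version, including the observation that the strong symmetric monoidality of $N_G^{G'}$ handles the Green functor case, fills in precisely what the paper leaves implicit.
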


This is conceptually connected to our later construction of Hochschild
homology in this context. The following proposition is immediate.

\begin{proposition}\label{prop:Restriction}
For any ring $R$ and for any group $G$, there is an associative
$G$-Green functor $\m{R}$ for which $i_e^{\ast}\m{R}\cong R$.
\end{proposition}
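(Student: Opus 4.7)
The plan is to give a direct algebraic construction of the desired $G$-Green functor. I would set $\mR(G/H) := R$ for every subgroup $H \subseteq G$, extend to arbitrary finite $G$-sets by additivity, and specify the structure maps as follows: the restrictions $\res^H_K$ and conjugations $c_g$ are the identity on $R$, while the transfer $\tr^H_K \colon \mR(G/K) \to \mR(G/H)$ for $K \subseteq H$ is multiplication by the index $[H:K]$. This is the classical ``constant Mackey functor at $R$,'' and the construction works equally well for non-commutative $R$.

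First, I would verify the Mackey functor axioms. Functoriality of restrictions and of conjugations is trivial since those maps are all identities, while functoriality of transfers reduces to the identity $[H:K]\cdot[K:L]=[H:L]$. The only nontrivial axiom is the double coset formula, which for these structure maps reduces to the orbit-counting identity
\[
[G:H] = \sum_{x \in K \backslash G / H} [K : K \cap xHx^{-1}],
\]
a standard fact about finite $G$-sets.

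Next, I would promote $\mR$ to an associative Green functor by using the multiplication of $R$ in each orbit. By the universal property of the box product, these orbitwise multiplications assemble into a single map $\mR \Box \mR \to \mR$ of Mackey functors precisely when Frobenius reciprocity holds, i.e., $\tr^H_K\bigl(a \cdot \res^H_K(b)\bigr) = \tr^H_K(a) \cdot b$ for $a,b \in R$. With the definitions above both sides equal $[H:K]\cdot ab$, so this is automatic. Associativity and unitality of the Green functor multiplication are inherited orbitwise from $R$; crucially, no commutativity hypothesis on $R$ is used.

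Finally, since $i_e^{\ast}$ sends a $G$-Mackey functor $\mM$ to the abelian group $\mM(G/e)$, we have $i_e^{\ast}\mR \cong R$ as rings by construction. There is no real obstacle: the entire argument is bookkeeping once the conventions for the directions of $\tr$ and $\res$ are fixed, and the only place any care is needed is the Frobenius reciprocity check, which is a one-line calculation. A more conceptual alternative would be to inflate the Eilenberg--Mac Lane spectrum $HR$ along $G \to e$ and set $\mR := \m{\pi}_0^G$ of the resulting $G$-spectrum, but the algebraic construction above is self-contained and avoids invoking equivariant stable homotopy.
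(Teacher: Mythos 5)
Your construction is correct and is the same object the paper uses: the constant Mackey functor you describe (identity restrictions and conjugations, transfers given by multiplication by the index) is precisely the fixed-point Mackey functor of $R$ equipped with the trivial $G$-action, which is what the paper's one-line proof invokes. You have simply unwound that construction and verified the double coset formula and Frobenius reciprocity by hand, correctly noting that no commutativity of $R$ is needed.
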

\begin{proof}
Endow $R$ with the trivial $G$-action and let $\m{R}$ be the
fixed-point Mackey functor. Since $R$ is an associative ring, this is
an associative Green functor.
\end{proof}

\begin{corollary}
For any associative ring $R$, the associative Green functor $N_e^G R$
is a $G$-object in Green functors.
\end{corollary}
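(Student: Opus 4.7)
The plan is to combine the two results immediately preceding the corollary. First I would invoke Proposition~\ref{prop:Restriction} to promote the ordinary associative ring $R$ to an associative $G$-Green functor $\m{R}$ with the property that $i_e^{\ast}\m{R}\cong R$ (concretely, $\m{R}$ is the fixed-point Green functor of $R$ with trivial $G$-action). This is the step that lets us view the input $R$ as the restriction of a genuinely $G$-equivariant object, so that the preceding corollary applies.

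Next I would apply the unlabeled corollary above (the combination of Proposition~\ref{prop:Composite} and Proposition~\ref{prop:LanGmodH}) in the special case $H=e$ and $G=G'$ equal to our ambient finite group. Since $H=e$ is normal in $G$, the Weyl group is $W_G(e)=N_G(e)/e=G$. The corollary then asserts that $N_e^G i_e^{\ast}\m{R}$ carries the structure of a $G$-object in $G$-Green functors. Under the identification $i_e^{\ast}\m{R}\cong R$ provided in the previous paragraph, this reads
\[
N_e^G R \cong N_e^G i_e^{\ast}\m{R},
\]
and the right-hand side is a $G$-object in Green functors, which is exactly what was to be shown.

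The only point requiring a moment of care is verifying that the $G$-action produced by this route is a well-defined action on the Green functor $N_e^G R$ intrinsically, independent of the choice of lift $\m{R}$. This follows from the proof of Proposition~\ref{prop:LanGmodH}, which identifies the action as coming from the natural right action of $Aut_G(G/e)^{\op}\cong W_G(e)\cong G$ on the coinduction functor $F_e(G,-)\cong F(G,-)$ by precomposition; since this action depends only on the functor $F(G,-)$ and not on the particular Mackey functor being Kan extended, it is intrinsic. I do not anticipate any technical obstacle beyond bookkeeping, as the substantive work has already been done in Propositions~\ref{prop:Composite} and~\ref{prop:LanGmodH}; the corollary is essentially an unpacking of these results in the case $H=e$.
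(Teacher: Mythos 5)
Your proposal is correct and follows essentially the same route as the paper: lift $R$ to a $G$-Green functor $\m{R}$ via Proposition~\ref{prop:Restriction} so that $N_e^G R \cong N_e^G i_e^\ast \m{R}$, then apply Proposition~\ref{prop:LanGmodH} with $W_G(e)=G$. Your added remark on independence of the choice of lift is a harmless elaboration beyond what the paper records.
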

\begin{proof}
Proposition~\ref{prop:Restriction} shows that there is an associative
Green functor $\m{R}$ such that $i_e^{\ast}\mR\cong R$ as associative
rings (which are the same as associative Green functors for the
trivial groups). In particular, we know that
\[
N_e^G R\cong N_e^G i_e^\ast \m{R},
\]
so by Proposition~\ref{prop:LanGmodH}, this is a $W_G(e)=G$-object in
$G$-Green functors.
\end{proof}

\begin{remark}
If $R$ is an associative ring orthogonal spectrum, then $N_e^G R$ is
naturally an associative ring orthogonal $G$-spectrum. The proof is
essentially the same: the role of the fixed point Mackey functor is
played here by the push-forward functor $\Sp\to\Sp^G$.
\end{remark}

The other composite of the restriction and norm is also easy to
understand.  For simplicity, we restrict attention here to abelian
groups, where all subgroups are normal. In this case, if \(J,H\subset G\) are subgroups, then the double cosets are the same as the cosets of \(JH\).

\begin{theorem}\label{thm:ResofNorm}
Let $G$ be a finite, abelian group and let \(H\) and \(J\) be subgroups. Then we have a natural isomorphism
\[
i_{J}^{\ast}N_{H}^{G}(\mhyphen)\cong\big(N_{H\cap J}^{J}i_{H\cap J}^{\ast}(\mhyphen)\big)^{\Box [G:JH]}.
\]
The action of \(G\) on this is via tensor induction.
\end{theorem}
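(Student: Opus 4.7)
The plan is to reduce the claim, via Hoyer's description of the norm as a left Kan extension along coinduction (Theorem~\ref{thm:hoyer}), to an elementary double-coset decomposition at the level of finite sets. The combinatorial heart of the argument is the following natural $J$-equivariant isomorphism, valid for any finite $H$-set $T$:
\[
i_J^*\, F_H(G, T)\;\cong\;\coprod_{i=1}^{[G:JH]} F_{H\cap J}\bigl(J,\, i_{H\cap J}^H T\bigr).
\]
To establish it I would choose coset representatives $g_1,\dots,g_k$ for $G/(JH)$, where $k=[G:JH]$. Since $G$ is abelian, right multiplication by $J$ on $G$ preserves each coset $g_i JH$, so $F_H(G,T)$ factors $J$-equivariantly as $\prod_i F_H(g_i JH, T)$. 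For each $i$, the map $f\mapsto \bigl(j\mapsto f(g_i j)\bigr)$ is a $J$-equivariant bijection $F_H(g_i JH, T)\cong F_{H\cap J}(J,T)$: commutativity of $G$ converts the $H$-equivariance condition on $g_i JH$ precisely into $(H\cap J)$-equivariance on $J$, with inverse sending $\tilde f$ to the map $g_i j h\mapsto h\tilde f(j)$ (well-definedness again using abelianness of $G$).

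Next I would transport this identity through left Kan extensions. Both functors in play are defined as Set-level left Kan extensions on the Burnside category: $N_H^G$ along $F_H(G,-)$ by Definition~\ref{def:normhoy}, and $i_J^*$ along the forgetful functor from finite $G$-sets to finite $J$-sets. Composing, $i_J^*\circ N_H^G(-)\cong \Lan_{\Phi}(-)$ where $\Phi=i_J^*\,F_H(G,-)$, which by the previous step is the functor
\[
T\;\longmapsto\;\coprod_{i=1}^{k} F_{H\cap J}\bigl(J,\, i_{H\cap J}^H T\bigr).
\]
Since the box product is itself a left Kan extension along Cartesian product (Proposition~\ref{prop:boxrep}), and left Kan extensions commute with coproducts in the indexing category, a left Kan extension along a coproduct-valued functor decomposes as the box product of the individual left Kan extensions. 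Applying this yields
\[
i_J^*\,N_H^G(\mM)\;\cong\;\bigl(N_{H\cap J}^J\,i_{H\cap J}^*\,\mM\bigr)^{\Box [G:JH]},
\]
as required.

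Finally, for the $G$-action: $G$ acts on the indexing set of $JH$-cosets by left multiplication, and under the identification above this induces a permutation action on the box factors; the residual action of stabilizers on each factor is precisely the standard tensor induction formula. Naturality follows by tracing through the identifications in Step 1.

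The main obstacle I anticipate is the formal step where coproducts in the indexing Burnside category are converted into iterated box products inside the Kan extension. The set-theoretic decomposition is elementary once the coset representatives are fixed, but care is needed to verify that the equivalence respects both the $J$-action on each factor (coming from the Step 1 bijection) and the $G$-action by permutation of factors (tensor induction); this relies crucially on treating the box product as a Kan extension along $\times$ and invoking functoriality of left Kan extensions in both slots.
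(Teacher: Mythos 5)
Your approach is essentially the paper's: both arguments model the norm, the restriction, and the box product as left Kan extensions on the Burnside category, reduce the claim to a set-level decomposition of $i_J^{\ast}F_H(G,-)$ indexed by $J\backslash G/H$ (which for abelian $G$ is $G/JH$), and then invoke a distributivity statement to convert that decomposition into an iterated box product. Your explicit bijection $f\mapsto(j\mapsto f(g_ij))$ is a hands-on version of the paper's observation that $i_J^{\ast}G/H\cong\coprod_{J\backslash G/H}J/(J\cap H)$.

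One point does need correcting. The decomposition of $i_J^{\ast}F_H(G,T)$ is a \emph{Cartesian product} $\prod_{i}F_{H\cap J}(J,i_{H\cap J}^{\ast}T)$ of $J$-sets --- exactly as your own prose says (``factors $J$-equivariantly as $\prod_i F_H(g_iJH,T)$'') --- not a coproduct. The $\coprod$ appearing in your two displayed formulas, and the justification ``a left Kan extension along a coproduct-valued functor decomposes as the box product,'' are wrong as written: in the Burnside category disjoint union is both the product and the coproduct, and a Kan extension along a disjoint-union-valued functor yields a \emph{direct sum} of Mackey functors, not a box product. The box product is Day convolution for the Cartesian product of finite $G$-sets, so the step you actually need is that the Kan extension distributes over a Cartesian-product-valued functor; this is precisely the indexed product/distributive law of \cite[\S A.3]{HHR} that the paper invokes at this point. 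With $\prod$ in place of $\coprod$ and that justification substituted, your argument coincides with the paper's, and your closing remarks on the permutation of box factors giving the tensor-induced $G$-action are at the same level of detail as the paper's own treatment.
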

\begin{proof}
The restriction in Mackey functors can also be modeled by a left Kan extension: it is the left Kan extension along the ordinary restriction functor from \(\cA^{G}\) to \(\cA^{J}\). It therefore suffices to show that we have a natural isomorphism of functors
\[
i_{J}^{\ast} F_{H}(G,\mhyphen)\cong \prod_{g\in J\backslash G/H} F_{H\cap J}(J,i_{H\cap J}^{\ast}(\mhyphen)\big).
\]
Here we observe that we are forming the indexed product in the sense of \cite[\S A.3]{HHR}, so it suffices to show this at the level of diagrams. The result follows from the observation that
\[
i_{J}^{\ast}G/H\cong \coprod_{g\in J\backslash G/H} J/(J\cap H).
\]
\end{proof}

\begin{corollary}
Let $G$ be an abelian group.
\begin{enumerate}
\item For $H\subset J\subset G$, we have a natural isomorphism of
  functors
\[
i_J^\ast N_H^G\cong \big(N_H^J\big)^{\Box [G:J]}.
\]
\item For $J\subset H\subset G$, we have a natural isomorphism of
  functors
\[
i_J^\ast N_H^G\cong \big(i_J^\ast\big)^{\Box [G:H]}.
\]
\end{enumerate}

\end{corollary}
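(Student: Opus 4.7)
The plan is to derive both parts directly from Theorem~\ref{thm:ResofNorm} by specializing to the two cases in which one of $H$ or $J$ contains the other. The only work required is to identify $H\cap J$, the product subgroup $JH$, and the index $[G:JH]$ in each case, and then verify that the functors $N_{H\cap J}^{J}$ and $i_{H\cap J}^{\ast}$ appearing on the right-hand side of Theorem~\ref{thm:ResofNorm} degenerate to the functors named in the statement.

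For part~(1), I would first observe that the hypothesis $H\subset J$ forces $H\cap J=H$ and $JH=J$, so $[G:JH]=[G:J]$. On an $H$-Mackey functor, $i_{H\cap J}^{\ast}=i_{H}^{\ast}$ is the identity, and $N_{H\cap J}^{J}=N_{H}^{J}$, so Theorem~\ref{thm:ResofNorm} immediately produces $\bigl(N_{H}^{J}(-)\bigr)^{\Box[G:J]}$, as claimed.

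For part~(2), the hypothesis $J\subset H$ instead gives $H\cap J=J$ and $JH=H$, so $[G:JH]=[G:H]$. Now $N_{H\cap J}^{J}=N_{J}^{J}$ is the identity on $J$-Mackey functors, while $i_{H\cap J}^{\ast}=i_{J}^{\ast}$ is the ordinary restriction from $H$-Mackey functors to $J$-Mackey functors, so Theorem~\ref{thm:ResofNorm} produces $\bigl(i_{J}^{\ast}(-)\bigr)^{\Box[G:H]}$, again as claimed.

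There is no substantive obstacle here: both statements are pure bookkeeping specializations of Theorem~\ref{thm:ResofNorm}. The only point worth double-checking is the identification of the indexing set, namely that for $G$ abelian the double coset space $J\backslash G/H$ equals $G/JH$, so that the indexed product is genuinely over a set of cardinality $[G:JH]$ and the tensor-induction description of the $G$-action makes sense in each degenerate case.
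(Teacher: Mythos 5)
Your proposal is correct and matches the paper's intent exactly: the corollary is stated as an immediate specialization of Theorem~\ref{thm:ResofNorm}, and your identifications of $H\cap J$, $JH$, and $[G:JH]$ in the two cases are precisely the bookkeeping required. Nothing further is needed.
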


We include a small computation that is of independent interest.  We
point the reader to Section~\ref{sec:Example} for further discussion
of the computation here.

\begin{proposition}
Let $G=C_{p^{n}}$ and let $M$ be a module over an $\mathbb
F_{p}$-algebra $R$. Then the Mackey functor $N_{e}^{C_{p^{n}}}M$ is a
module over the constant Mackey functor $\m{\Z}$: for any $1\leq k\leq
n$, we have
\[
tr_{C_{p^{k-1}}}^{C_{p^{k}}}\circ res_{C_{p^{k-1}}}^{C_{p^{k}}}=p.
\]
\end{proposition}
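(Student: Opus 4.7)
The plan is first to reduce from $M$ to the ambient $\mathbb F_p$-algebra $R$, then use the restriction formula from Theorem~\ref{thm:ResofNorm} to reduce further to a single top-level identity, which will be recognized as a classical Witt-vector relation.

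Since the norm is strong symmetric monoidal, $N_{e}^{C_{p^n}}M$ is a module over the commutative Green functor $N_{e}^{C_{p^n}}R$. Whenever a commutative Green functor $\mathcal{R}$ is itself a $\m{\Z}$-module as a Mackey functor, the unit map $\m{\Z}\to\mathcal{R}$ is a map of Green functors (the $tr\circ res$ identity at each subgroup chain is exactly what is needed to check), and every $\mathcal{R}$-module inherits a $\m{\Z}$-module structure. So it suffices to prove the proposition when $M=R$.

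Next, I would apply the first part of the corollary of Theorem~\ref{thm:ResofNorm}, with $H=e$, $J=C_{p^k}$, and $G=C_{p^n}$, to obtain
\[
i_{C_{p^k}}^{\ast}\,N_{e}^{C_{p^n}}R \cong \bigl(N_{e}^{C_{p^k}}R\bigr)^{\Box p^{n-k}}.
\]
Since transfers and restrictions between $C_{p^{k-1}}$ and $C_{p^k}$ commute with $i_{C_{p^k}}^{\ast}$, and since $\m{\Z}$-module Mackey functors are closed under $\Box$ (as $\m{\Z}$ is a commutative Green functor), this reduces the statement to showing that $N_{e}^{C_{p^k}}R$ is a $\m{\Z}$-module for each $k\leq n$. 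Applying the same restriction formula internally shows that $N_{e}^{C_{p^k}}R$ is automatically a $\m{\Z}$-module at all levels below the top, so only the single top-level identity
\[
tr_{C_{p^{k-1}}}^{C_{p^k}}\circ res_{C_{p^{k-1}}}^{C_{p^k}} = p \quad\text{on}\quad N_{e}^{C_{p^k}}R\bigl(C_{p^k}/C_{p^k}\bigr)
\]
requires verification, which I would handle by induction on $k$.

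For this identity, I would use the Tambara structure: the top level is generated as an abelian group by transfer elements $tr_{e}^{C_{p^k}}(\xi)$ with $\xi\in R^{\otimes p^k}$ and by norm elements $N(r)$ with $r\in R$, subject to the Tambara relations. On a transfer element, the Mackey double-coset formula gives $(tr\circ res\circ tr)(\xi)=p\cdot tr(\xi)$ directly. On a norm element, the identity becomes $tr_{e}^{C_{p^k}}(r^{\otimes p^k})=p\cdot N(r)$, which is the classical Witt-vector relation $V(F(x))=p\,x$ in $W_{k+1}(R)$; for $R$ of characteristic $p$ the Frobenius is a ring endomorphism and the additive corrections involving $\binom{p}{i}$ with $0<i<p$ vanish, so the identity holds on norms. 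The main obstacle will be making the last step fully rigorous, namely confirming that the top level is additively generated by such Tambara expressions and that the Witt-vector relation propagates through the additive relations; this will rest on the Hesselholt--Madsen identification $\pi_{0}^{C_{p^k}}\THH(HR)\cong W_{k+1}(R)$ combined with Hoyer's algebraic presentation of the norm.
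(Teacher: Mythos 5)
Your overall strategy---reduce from $M$ to $R$, restrict to $C_{p^k}$, and verify the identity on additive generators of the top level---is sound in outline, but it takes a genuinely harder road than the paper does and, as written, does not close. The first issue is that your reduction stops too early. Since $R$ is an $\mathbb{F}_p$-algebra, $M$ is in particular an $\mathbb{F}_p$-module, so strong symmetric monoidality of the norm makes $N_e^{C_{p^n}}M$ a module over $N_e^{C_{p^n}}\mathbb{F}_p$; the paper therefore reduces at once to the single case $M=R=\mathbb{F}_p$ and reads the answer off Mazur's explicit computation of $N_e^{C_{p^n}}\mathbb{F}_p$ (levels $\Z/p^{k+1}$, restrictions the surjections, transfers the evident inclusions), where $tr\circ res=p$ is visible by inspection. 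By reducing only to a general $\mathbb{F}_p$-algebra $R$ you are forced into the Witt-vector analysis that this further reduction avoids entirely.

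The second issue is that the step you flag as the ``main obstacle'' is a real gap, and one of your displayed identities is incorrect. The top level of $N_e^{C_{p^k}}R$ is additively generated not by $tr_e^{C_{p^k}}(\xi)$ and $N(r)$ alone, but by transfers $tr_{C_{p^j}}^{C_{p^k}}$ of norm classes from all intermediate subgroups $0\le j\le k$; this generation statement is what needs proof (it does follow from Hoyer's construction, but you have not established it), whereas the worry about ``propagating through the additive relations'' is vacuous once you have generators, since both sides of the identity are additive. More seriously, on a norm class the relevant composite is
\[
tr_{C_{p^{k-1}}}^{C_{p^k}}\big(res_{C_{p^{k-1}}}^{C_{p^k}}N_e^{C_{p^k}}(r)\big)
= tr_{C_{p^{k-1}}}^{C_{p^k}}\big(N_e^{C_{p^{k-1}}}(r)^{p}\big),
\]
i.e.\ $V(F([r]))$ for the one-step Verschiebung and Frobenius, not $tr_e^{C_{p^k}}(r^{\otimes p^k})=V^kF^k([r])$ as you wrote; the latter does not equal $p\cdot N(r)$ for $k\ge 2$. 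The relation you actually need, $VF=p$ on $W_{k+1}(R)$ in characteristic $p$, is correct, but resting it on the Hesselholt--Madsen identification of $\pi_0\THH(HR)^{C_{p^k}}$ to prove a purely algebraic statement about the Mackey-functor norm is both heavier and less direct than the explicit computation the paper uses.
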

\begin{proof}
Since the norm functor is strong symmetric monoidal, it suffices to
show this for $M=R=\mathbb F_{p}$. Here Mazur's original construction
and her example, \cite[Example 3.12]{MazurArxiv}, show that the norm to $C_{p^{n}}$ of
$\mathbb F_{p}$ has
\[
(N_{e}^{C_{p^{n}}}\mathbb F_{p})(C_{p^{n}}/C_{p^{k}})=\mathbb
Z/p^{k+1},
\]
the restriction maps are all surjective, and the transfer maps are the
obvious inclusions.
\end{proof}

\subsection{Relative Twisted Hochschild homology}
We are now ready to define relative versions of the twisted cyclic nerve and the Hochschild homology of Green functors.

\begin{definition}
Let $K\subset G\subset S^1$ be finite subgroups, let $g$ be the
generator $e^{2\pi i / |G|} \in S^1$ of $G \subset S^1$, and let $\mR$ be an associative Green functor for
$K$. Define the $G$-twisted cyclic nerve relative to $K$ as
\[
\HC{G}_{K}(\mR)_{\bullet}:=\HC{G}_{\bullet}\big(N_K^{G}\mR;{}^{g}(N_{K}^{G}\mR)\big).
\]
The $G$-twisted Hochschild homology relative to $K$,  $\m{\HH}^{G}_K(\m{R})_{\ast}$, is the homology
of $\HC{G}_K(\mR)_{\bullet}$.
\end{definition}

Applying Lemma~\ref{lem:Tor} gives the following immediate
proposition, the relative version of Proposition \ref{Prop:tor}.

\begin{proposition}
Let $K \subset G \subset S^1$ be finite subgroups and $g\in G$ the
generator. For any $K$-Green functor $\m{R}$ which can be written as a filtered colimit of projectives, we have a natural
isomorphism
\[
\m{\HH}_K^{G}(\m{R})_i\cong \m{\boxtor}_i^{N_K^G\mR\Box
  N_K^G\mR^{op}}(N_K^G\m{R},{}^{g}(N_K^G\m{R})),
\]
where $\m{\boxtor}_i$ is the $i$th derived functor of the box product.
\end{proposition}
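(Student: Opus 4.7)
The plan is to apply Lemma~\ref{lem:Tor} directly. Unwinding the definition of the relative twisted cyclic nerve gives
\[
\m{\HH}_K^G(\mR)_i = H_i\big(\HC{G}_\bullet(N_K^G\mR;\, {}^g(N_K^G\mR))\big),
\]
so it suffices to verify that the $G$-Green functor $N_K^G\mR$ is flat over the Burnside Mackey functor $\mA$; then Lemma~\ref{lem:Tor} identifies the homology of the twisted cyclic nerve with the claimed derived functor expression, applied to the associative Green functor $N_K^G\mR$ and its twisted bimodule ${}^g(N_K^G\mR)$.

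To establish this flatness, the strategy is to show that the norm carries a presentation of $\mR$ as a filtered colimit of projective $K$-Mackey functors to a filtered colimit of flat $G$-Mackey functors. The two inputs are: first, by Proposition~\ref{prop:NormofProjectives}, $N_K^G$ carries each representable $\mA^K_T$ to the representable $\mA^G_{F_K(G,T)}$, which is projective (hence flat) by the Yoneda lemma; second, by the description of the norm as a left Kan extension in Definition~\ref{def:normhoy}, $N_K^G$ commutes with filtered colimits, since left Kan extensions preserve all colimits in the source variable. Combining these, $N_K^G$ applied to a filtered colimit of projectives is a filtered colimit of flat Mackey functors, and filtered colimits of flat Mackey functors are themselves flat.

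The step that requires the most care, and the place where one must be a bit cautious, is the passage from representables to arbitrary projective Mackey functors. A general projective is a retract of a sum of representables, but the norm is strong symmetric monoidal with respect to $\Box$, not with respect to direct sums; nevertheless, one reduces to the case of representables using the explicit formula $N_K^G\mA^K_T \cong \mA^G_{F_K(G,T)}$, and concludes using that retracts and filtered colimits of flat Mackey functors are flat. Once flatness of $N_K^G\mR$ is in hand, the proposition follows at once from Lemma~\ref{lem:Tor}.
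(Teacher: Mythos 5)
Your argument is correct and takes essentially the same route as the paper, whose proof likewise reduces to Lemma~\ref{lem:Tor} by observing that the norm preserves projective objects and filtered colimits, so that $N_K^G\mR$ is again a filtered colimit of projectives and hence flat. One minor caution: your blanket claim that ``left Kan extensions preserve all colimits in the source variable'' should be restricted to filtered (sifted) colimits here, since the paper notes the Hoyer norm is \emph{not} a left adjoint on Mackey functors; the argument survives because filtered colimits are computed levelwise and agree in the ambient functor category and in Mackey functors, which is all you use.
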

\begin{proof}
The norm preserves projective objects and filtered colimits, so in particular, the norm of \(\m{R}\) is again a filtered colimit of projectives. These are flat.
\end{proof}

\section{Differential graded and simplicial Mackey functors}\label{sec:dgandSimpMackey}

When discussing the homotopical nature of our various functors, we use
model structures on the categories of dg and simplicial Mackey
functors.  Regarding the category of Mackey functors as modules over a
ring, establishing the existence of such model structures is standard;
we take as a convenient reference~\cite{GoerssSchemmerhorn}, although
of course these results go back to Quillen~\cite[\S 2]{Quillen}.

\begin{theorem}\label{thm:dgmackmod}
The category $\Ch_*^{+}(\Mack)$ of non-negatively graded dg Mackey functors has
a model structure in which a map $f \colon \m{B}_{\bullet} \to
\m{C}_{\bullet}$ is
\begin{enumerate}
\item a weak equivalence if it induces a quasi-isomorphism (i.e., an
  isomorphism of homology Mackey functors),
\item a fibration if it is a surjection in positive degrees, and
\item a cofibration if it is an injection with projective levelwise
  cokernel.
\end{enumerate}
\end{theorem}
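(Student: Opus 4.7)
The plan is to reduce this to the standard projective model structure on non-negatively graded chain complexes of modules over a ring, which is precisely the content of the cited reference \cite{GoerssSchemmerhorn}. The essential point is that $\Mack$ is an abelian category with enough projectives, and in fact is equivalent to a category of modules.

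First, I would recall the identification of $\Mack$ with the category of left modules over the Mackey algebra $\mu_G(\bZ)$, a point already noted in Section~\ref{sec:HomConst}. Since $\mu_G(\bZ)$ is an ordinary associative unital ring, the category $\mu_G(\bZ)\text{-Mod}$ carries the standard projective model structure on its non-negatively graded chain complexes, as developed in Quillen~\cite[\S 2]{Quillen} and treated explicitly in \cite{GoerssSchemmerhorn}. This model structure is cofibrantly generated, with generating cofibrations given by the inclusions of boundaries of disk complexes $D^n(\mA_T) \hookrightarrow D^{n+1}(\mA_T)$ built from the representable projectives $\mA_T$ of Definition~\ref{defn:Representables}, and generating acyclic cofibrations $0 \hookrightarrow D^n(\mA_T)$. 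The small object argument applies because $\Mack$ is locally presentable.

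Second, I would verify that the three classes have the descriptions claimed. Weak equivalences being quasi-isomorphisms is tautological. Fibrations being surjections in positive degrees is the standard characterization for the projective model structure on $\Ch_{*}^{+}$ (the degree-zero part is unconstrained so that the cofibrant replacement of an object concentrated in degree zero can be a projective resolution). The characterization of cofibrations as degreewise injections with projective levelwise cokernel is the usual consequence: a cofibration is a retract of a relative cell complex built from the generating cofibrations above, and such retracts are precisely the degreewise split injections with levelwise projective cokernel; in an abelian category with enough projectives this coincides with ``injective with projective cokernel in each degree.''

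The only mild subtlety, and the step I would be most careful about, is the characterization of cofibrations: one must check that any degreewise injection with levelwise projective cokernel has the left lifting property against acyclic fibrations. This follows by an induction up the degree, lifting degree by degree using projectivity of the cokernels together with the surjectivity of the fibration in positive degrees; the degree-zero case is automatic since no condition is imposed there on the fibration. With these verifications, the three axioms of a model category (two-out-of-three, retract closure, and the two lifting/factorization axioms) follow directly from the corresponding statements for $\mu_G(\bZ)\text{-Mod}$, completing the proof.
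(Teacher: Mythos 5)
Your proposal is correct and follows exactly the route the paper takes: the paper gives no proof of this theorem, instead observing that $\Mack$ is a category of modules over the Mackey algebra $\mu_G(\bZ)$ and citing \cite{GoerssSchemmerhorn} (and Quillen) for the standard projective model structure on non-negatively graded chain complexes of modules. Your filling-in of the generating (acyclic) cofibrations and the lifting argument for cofibrations is the standard verification implicit in that citation (modulo a small notational slip: the generating cofibrations should be the sphere-to-disk inclusions $S^{n-1}(\mA_T)\hookrightarrow D^{n}(\mA_T)$, not $D^{n}\hookrightarrow D^{n+1}$).
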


We can still obtain a model structure on the category $\Ch_*(\Mack)$
of unbounded complexes of Mackey functors~\cite[2.3.11]{Hovey}, but in
that case the description of the cofibrations becomes more
complicated.  For our purposes, however, it suffices that cofibrant
objects in $\Ch_*(\Mack)$ are in particular levelwise
projective~\cite[2.3.9]{Hovey}.

\begin{remark}
Generalizing the comparison between $H\bZ$-module spectra and dg
modules over $\bZ$ (e.g., see~\cite{SchwedeShipley, Shipley}), there
is a Quillen equivalence between the category $\Ch_*(\Mack)$ and the
category of modules in $\Sp^G$ over $H\m{A}$.  However, a careful
proof of this result has not yet appeared in the literature.  Note 
also that there is some subtlety to the multiplicative story, as
recorded in~\cite{Ullman}: if \(\mR\) is a commutative Green functor
that has no Tambara functor structure, then \(H\mR\) cannot be a
commutative ring spectrum.
\end{remark}

We now turn to the category $\sMack$ of simplicial Mackey functors.
The Dold-Kan correspondence shows that the category $\sMack$ is
equivalent to the category $\Ch_*^{+}(\Mack)$ via the normalized chain
complex construction; this provides a lifted model structure on
$\sMack$~\cite[4.4.2]{GoerssSchemmerhorn}.

\begin{theorem}\label{thm:simpmackmod}
The category $\sMack$ of simplicial Mackey functors has a simplicial
model structure in which a map $f \colon \m{M}_{\bullet} \to
\m{N}_{\bullet}$ is
\begin{enumerate}
\item a weak equivalence if it induces a quasi-isomorphism of
  associated normalized chain complexes,
\item a fibration if it is a fibration of simplicial sets, and
\item a cofibration if the cokernel of the underlying degeneracy
  diagrams (i.e., the objects obtained by forgetting the face maps) is
  of the form
\[
\bigoplus_k \bigoplus_{\phi \colon [n] \to [k]} \phi^* \m{P}_k
\]
in degree $n$, where the second sum runs over the surjections and each $\m{P}_k$ is projective.
\end{enumerate}
\end{theorem}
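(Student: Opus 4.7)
The plan is to transport the model structure of Theorem~\ref{thm:dgmackmod} across the Dold--Kan correspondence. Since $\Mack$ is abelian, the normalized chains functor $N_{\bullet}\colon\sMack\to\Ch_{\ast}^{+}(\Mack)$ is an equivalence of categories with inverse $\Gamma$, so I would declare a map $f$ in $\sMack$ to be a weak equivalence, fibration, or cofibration exactly when $N_{\bullet}(f)$ is one in $\Ch_{\ast}^{+}(\Mack)$. This produces a model structure for free, and the remaining work is to recognize each of the three classes in simplicial language.

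For (1) there is nothing to check: weak equivalences are by definition quasi-isomorphisms of normalized chains. For (2), I would invoke the classical theorem of Dold (cf.~\cite[\S 2]{Quillen}) that a morphism of simplicial objects in an abelian category is a Kan fibration of underlying simplicial sets precisely when its normalization is surjective in positive degrees; applied to $N_{\bullet}(f)$ this identifies the transferred fibrations with the Kan fibrations of the underlying simplicial sets.

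The substantive step is (3). The Eilenberg--Zilber lemma in the abelian category $\Mack$ gives, for any $\m{M}_{\bullet}\in\sMack$, a natural splitting of the underlying degeneracy diagram
\[
\m{M}_n\;\cong\;\bigoplus_{\phi\colon [n]\twoheadrightarrow [k]} \phi^{\ast}\,N_k(\m{M}),
\]
indexed by surjections $\phi$. A map $f\colon\m{M}_{\bullet}\to\m{N}_{\bullet}$ is a cofibration in $\sMack$ iff $N_{\bullet}(f)$ is a monomorphism with levelwise projective cokernel $\m{P}_{\bullet}$; applying the splitting above to the quotient $\m{N}_{\bullet}/\m{M}_{\bullet}$ recovers exactly the description in the statement, and the converse runs the same identification in reverse. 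Finally, I would equip $\sMack$ with the standard simplicial enrichment $(K\otimes\m{M})_n := \bigoplus_{K_n}\m{M}_n$; the pushout--product axiom SM7 then follows through Dold--Kan from the analogous fact for simplicial modules over the Mackey algebra $\mu_G(\bZ)$, as recorded in~\cite[4.4.2]{GoerssSchemmerhorn}. The main obstacle is exactly this cofibration description, since the other classes of maps and the simplicial enrichment are formal consequences of the equivalence of categories and standard transfer arguments.
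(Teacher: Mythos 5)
Your proposal is correct and follows the same route the paper takes: the paper obtains this model structure by transporting Theorem~\ref{thm:dgmackmod} across the Dold--Kan equivalence, citing \cite[4.4.2]{GoerssSchemmerhorn} and the view of Mackey functors as modules over the Mackey algebra. Your identifications of the fibrations (via Moore's criterion) and of the cofibrations (via the Eilenberg--Zilber splitting of the degeneracy diagram) are exactly the standard details the paper leaves implicit.
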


We also have several easy but very useful observations about this
model structure; the first is immediate, and the second follows from
the usual Dold-Kan analysis.

\begin{proposition}\label{prop:cofibrant}
\mbox{}
\begin{enumerate}
\item Every simplicial Mackey functor is fibrant.
\item A simplicial Mackey functor is cofibrant if and only if it is levelwise projective.
\end{enumerate}
\end{proposition}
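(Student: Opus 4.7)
The plan is to handle the two parts separately: part~(1) follows from Moore's theorem that every simplicial abelian group is a Kan complex, while part~(2) reduces to the Dold--Kan degeneracy splitting applied level-wise in $\Mack$. For part~(1), by Theorem~\ref{thm:simpmackmod} a fibration in $\sMack$ is detected on underlying simplicial sets; evaluating a simplicial Mackey functor $\m{M}_\bullet$ at any finite $G$-set yields a simplicial abelian group, which is automatically a Kan complex by Moore's theorem, so the unique map $0 \to \m{M}_\bullet$ has the required horn-filling property at each object of the Burnside category and is therefore a fibration.

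For the forward direction of part~(2), I apply the cofibration description of Theorem~\ref{thm:simpmackmod} to the map $0 \to \m{M}_\bullet$. The cokernel (in the category of degeneracy diagrams) is $\m{M}_\bullet$ itself, so the hypothesis gives an isomorphism
\[
\m{M}_n \cong \bigoplus_k \bigoplus_{\phi \colon [n]\twoheadrightarrow [k]} \phi^{\ast}\m{P}_k
\]
with each $\m{P}_k$ projective. Since $\phi^{\ast}\m{P}_k \cong \m{P}_k$ as a Mackey functor, $\m{M}_n$ is a direct sum of projectives and hence itself projective.

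For the converse, assume $\m{M}_\bullet$ is levelwise projective and set $N\m{M}_k := \bigcap_{i=0}^{k-1}\ker(d_i)\subset \m{M}_k$. The Dold--Kan (Moore) degeneracy decomposition, applied level-wise in the abelian category $\Mack$, gives a natural splitting
\[
\m{M}_n \cong \bigoplus_{\phi \colon [n]\twoheadrightarrow [k]} N\m{M}_k.
\]
Taking $n=k$ exhibits $N\m{M}_k$ as a direct summand of the projective Mackey functor $\m{M}_k$, so each $N\m{M}_k$ is projective; setting $\m{P}_k := N\m{M}_k$ then matches the cofibrancy criterion of Theorem~\ref{thm:simpmackmod}. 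The only point requiring care is that this decomposition is natural for the full Mackey-functor structure, so that it really is a decomposition of simplicial Mackey functors and not merely of the underlying simplicial abelian groups at each finite $G$-set; this is automatic, since the splitting depends only on the degeneracy maps and the abelian group structure, both of which are preserved by restrictions, transfers, and conjugations. I do not anticipate a genuine obstacle: once the Dold--Kan decomposition is recognized as a statement internal to any abelian category, the entire argument is essentially formal.
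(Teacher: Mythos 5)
Your proposal is correct and matches the paper's intent exactly: the paper offers no written proof, stating only that part (1) is immediate and part (2) ``follows from the usual Dold--Kan analysis,'' which is precisely the levelwise Moore/degeneracy splitting you carry out. One small slip: in part (1) fibrancy of $\m{M}_\bullet$ means the map $\m{M}_\bullet \to 0$ (to the terminal object) is a fibration, not $0 \to \m{M}_\bullet$; your actual argument (Kan-ness of the underlying simplicial abelian groups) is the right one, so this is only a typo in the direction of the map.
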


As suggested by the description of the weak equivalences in
Theorem~\ref{thm:simpmackmod}, we make the following definition.

\begin{definition}
Let $\m{M}_{\bullet}$ be a simplicial Mackey functor.  The homology
$H_*(\m{M}_{\bullet})$ is defined to be the homology of the associated
normalized dg Mackey functor.
\end{definition}

\begin{remark}
Note that in contrast to the notion of dg Mackey functor that is
relevant in the context of Kaledin's work~\cite{KaledinMackey,
  KaledinICM}, here we are simply using dg objects in the category of
Mackey functors.  Kaledin instead studies dg functors out of a dg
model of the Burnside category, i.e., the algebraic analogue of the Guillou--May and Barwick
``spectral presheaves'' approach to the equivariant stable category \cite{GuillouMay} \cite{Barwick}.
\end{remark}

We now turn to investigate the interaction of the box product and the norms with
these model structures.
\subsubsection{The derived box product}
\begin{definition}
If \(\mM_\bullet\) and \(\m{N}_\bullet\) are simplicial Mackey functors, then let \(\mM_\bullet\Box\m{N}_\bullet\) be the simplicial Mackey functor with 
\[
(\mM_\bullet\Box\m{N}_\bullet)_k=\mM_k\Box\m{N}_k,
\]
and where the structure maps are just the box products of the corresponding structure maps.
\end{definition}

Since cofibrant objects in the model structure of
Theorem~\ref{thm:simpmackmod} are levelwise projective, the following
proposition is immediate.

\begin{proposition}
Let $\mP$ be a cofibrant simplicial Mackey functor.  Then the functor
$\mP \Box (-)$ preserves weak equivalences and cofibrant objects.
\end{proposition}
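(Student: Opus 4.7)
The plan is to address the two assertions separately, invoking Proposition~\ref{prop:cofibrant} to characterize cofibrant simplicial Mackey functors as precisely the levelwise projective ones.

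For preservation of cofibrant objects, suppose $\mP$ and $\m{Q}$ are cofibrant, hence levelwise projective. Since the box product of simplicial Mackey functors is defined degreewise, it suffices to show that $\mP_n \Box \m{Q}_n$ is projective whenever $\mP_n$ and $\m{Q}_n$ are. Every projective Mackey functor is a retract of a direct sum of representables $\mA_T$, and Proposition~\ref{prop:boxrep} identifies $\mA_{T_1} \Box \mA_{T_2} \cong \mA_{T_1\times T_2}$, which is again representable, hence projective. Since $\Box$ commutes with direct sums and retracts in each variable, this handles claim (2).

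For preservation of weak equivalences, the plan is to pass through the Dold--Kan correspondence to $\Ch_*^+(\Mack)$. The first step is to establish a Mackey functor version of the Eilenberg--Zilber theorem: namely, that the natural shuffle map
\[
N(\mP_\bullet)\tensover{\Box} N(\m{Q}_\bullet)\longrightarrow N(\mP_\bullet\Box \m{Q}_\bullet)
\]
is a quasi-isomorphism of chain complexes of Mackey functors, where the left-hand side denotes the total complex of the bicomplex formed using $\Box$ in each bidegree. The classical Alexander--Whitney and shuffle maps, together with the acyclic models argument, depend only on the additive symmetric monoidal structure of the target abelian category and apply verbatim to $\Mack$. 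Granted this equivalence, the problem reduces to the following dg statement: if $\m{C}_\bullet$ is a bounded-below complex of projective Mackey functors (as is $N(\mP)$, by Proposition~\ref{prop:cofibrant}), then $\m{C}_\bullet\tensover{\Box}(-)$ preserves quasi-isomorphisms of bounded-below complexes in $\Mack$. This is the standard argument via mapping cones: boxing with a projective is exact, and boxing a bounded-below complex of projectives with an acyclic complex gives an acyclic total complex, as one sees from the two spectral sequences of the bicomplex.

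The principal obstacle is the Eilenberg--Zilber equivalence in the Mackey setting. Because $\Box$ is defined by Day convolution rather than pointwise, evaluation at an orbit $G/H$ does not immediately reduce the shuffle map to the classical one on simplicial abelian groups, so the cleanest route is to run the acyclic models argument internally in $\Mack$: the simplicial objects $n\mapsto \mA_{T_1}^{\otimes n}$ and $n\mapsto \mA_{T_2}^{\otimes n}$ are representable, and their normalized chains are homotopy equivalent to chains of a product in a unique way up to homotopy, after which the usual naturality argument upgrades the shuffle map to a quasi-isomorphism on all simplicial Mackey functors. Once this is verified, the rest of the proof is formal.
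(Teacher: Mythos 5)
Your proof is correct and is essentially the detailed version of the paper's argument, which simply observes that cofibrant objects are levelwise projective (Proposition~\ref{prop:cofibrant}) and declares the statement immediate from that. The one step you flag as delicate --- the Eilenberg--Zilber equivalence for the degreewise box product --- is the standard way to fill in the details, and it does hold: the shuffle and Alexander--Whitney maps and the homotopies witnessing that they are mutually inverse are given by universal formulas in the face and degeneracy maps, valid in any additive symmetric monoidal category with a biadditive product, so your internal acyclic-models argument (or a direct citation of those formulas) goes through.
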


Similarly, the box product induces a symmetric monoidal product on
$\Ch_*^{+}(\Mack)$ and $\Ch_*(\Mack)$ in the usual fashion; again we
use the description of the cofibrant objects in the model structures
to prove the following result.

\begin{proposition}
Let $\mP_{\cdot}$ be a cofibrant dg Mackey functor.  Then the functor
$\mP_{\cdot} \Box (-)$ preserves weak equivalences.
\end{proposition}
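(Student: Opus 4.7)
The plan is to reduce everything to the fact, emphasized in the introduction, that projective Mackey functors over a finite group are flat, so that boxing with any single projective Mackey functor is an exact functor on $\Mack$. My input will be that cofibrant dg Mackey functors are levelwise projective: in the bounded-below case this is immediate from Theorem~\ref{thm:dgmackmod} (apply the cokernel condition to $0 \hookrightarrow \mP_{\cdot}$), and in the unbounded case it is the observation from~\cite[2.3.9]{Hovey} cited above.

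Given a quasi-isomorphism $f\colon\mM_{\cdot}\to\mN_{\cdot}$, I would form the mapping cone $\mathrm{cone}(f)$, which is acyclic. Since $\mP_{\cdot}\Box(-)$ is additive and respects the mapping cone construction, it suffices to show that $\mP_{\cdot}\Box\mC_{\cdot}$ is acyclic whenever $\mC_{\cdot}$ is acyclic and $\mP_{\cdot}$ is cofibrant.

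To prove this, I would filter $\mP_{\cdot}$ by the stupid filtration $F_n \mP_{\cdot}$, keeping only degrees $\leq n$. This yields a filtration of $\mP_{\cdot}\Box\mC_{\cdot}$ whose associated graded pieces are shifted complexes of the form $(\mP_n\Box\mC_{\cdot})[n]$. Flatness of $\mP_n$ makes each of these acyclic, so the $E_1$-page of the induced spectral sequence vanishes. For $\mP_{\cdot}$ bounded below, the filtration is bounded below in each total degree; the spectral sequence converges strongly and we conclude $H_*(\mP_{\cdot}\Box\mC_{\cdot})=0$.

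The genuine obstacle is the unbounded case, where the stupid filtration is no longer bounded below in each total degree and levelwise projectivity is strictly weaker than what is needed to respect acyclicity. I would handle this by instead using the characterization of cofibrant objects in $\Ch_*(\Mack)$ as retracts of transfinite cellular composites of generating cofibrations of the form $S^{n-1}\mA_T\hookrightarrow D^n\mA_T$; since each generating cofibration is a degreewise split inclusion of projective Mackey functors, boxing with $\mC_{\cdot}$ preserves acyclicity on each cell attachment, and the property is closed under retracts, pushouts along cofibrations, and transfinite composition (equivalently, one invokes that such cofibrant complexes are $K$-flat in the sense of Spaltenstein). The single algebraic ingredient in every case is the flatness of projectives already cited.
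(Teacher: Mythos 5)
Your argument is correct, and in the bounded-below case it is essentially the argument the paper intends: the paper's entire justification is that cofibrant objects are levelwise projective, hence levelwise flat, and your mapping-cone-plus-stupid-filtration reduction is the standard way to convert that observation into preservation of quasi-isomorphisms. Where you go beyond the paper is the unbounded case. The paper only records (via~\cite[2.3.9]{Hovey}) that cofibrant objects of $\Ch_*(\Mack)$ are levelwise projective, and you are right that this alone is not enough there: the stupid filtration fails to be bounded below in each total degree, and an acyclic unbounded complex boxed with a merely levelwise-projective complex need not remain acyclic. Your replacement --- writing $\mP_{\cdot}$ as a retract of a transfinite composite of pushouts of the generating cofibrations $S^{n-1}\mA_T \hookrightarrow D^n\mA_T$, whose cokernels are single projectives concentrated in one degree, and then propagating acyclicity through the cell filtration by the long exact sequence --- is the correct Spaltenstein-style $K$-flatness argument, and it is what a complete proof of the unbounded statement actually requires. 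The one step worth making explicit is that the box product commutes with filtered colimits in each variable (being a Day convolution, i.e.\ a left Kan extension), since that is what allows acyclicity to pass to the colimit at limit ordinals; with that noted, the proof is complete.
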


\begin{remark}
One might wonder about the multiplicative comparison between dg Mackey
functors and simplicial Mackey functors.  Classically, only one of the
two maps connecting simplicial abelian groups and non-negatively
graded chain complexes is appropriately monoidal: the normalized chain
functor is lax symmetric monoidal while the inverse equivalence is only
\(E_{\infty}\), as described by Richter \cite{RichterChZsAb}.  We
believe that a similar situation holds here, with additional complexity
introduced by the subtlety of equivariant multiplicative structure.
If $\mP$ is a simplicial Green functor, a suitable generalization of
Richter's argument should show that the associated dg Mackey functor
is an $E_\infty$ monoid for the trivial $E_\infty$ operad. We believe
that extending this result to simplicial Tambara functors, or general
$N_\infty$ algebra objects, will require additional structure to
encode the norm operation on chain complexes of Mackey functors.

\end{remark}

\subsubsection{The derived norm}
We can similarly define the norm levelwise and check that it is derivable. We will use this in \S~\ref{sec:Shukla} to build a homotopy invariant form of our complexes. Of course, if we step through spectra, then we know that the norm is derivable, since the norm in spectra is a left Quillen functor. We give a more self-contained treatment here.

\begin{definition}
If \(\mM_\bullet\) is a simplicial \(H\)-Mackey functor, then let \(N_H^{G}\mM_\bullet\) be the simplicial Mackey functor for \(G\) defined by applying the norm functor levelwise.
\end{definition}

We being with a definition motivated by Bredon homology. Using the transfer, the maps
\[
\mA_{(\mhyphen)}\colon T\mapsto \mA_T
\]
extend to a functor from the category of finite \(G\)-sets to Mackey functors. This gives a functor from simplicial \(G\)-sets with finitely many \(k\)-simplices for all \(k\) to simplicial Mackey functors.

\begin{definition}
Let \(T_\bullet\) be a simplicial \(G\)-set such that for each \(k\), the set \(T_k\) is finite. Composing with \(\mA_{(\mhyphen)}\) then gives a simplicial Mackey functor, which we will denote \(\mA\cdot T_\bullet\). This gives a functor from the full subcategory of finite simplicial \(G\)-sets to \(\sMack\).
\end{definition}

\begin{remark}
For \(T_\bullet\) as above, the complex associated to \(\mA\cdot T_\bullet\) is the ordinary cellular chain complex computing Bredon homology of the geometric realization of \(T_\bullet\) with coefficients in $\mA$.
\end{remark}

Coinduction provides a kind of norm from simplicial \(H\)-sets to simplicial \(G\)-sets.

\begin{definition}
If \(T_\bullet\) is a finite simplicial \(H\)-set, then let 
\[
\Map_H(G,T_\bullet)
\]
be the finite simplicial \(G\)-set which arises by composing with the coinduction functor.
\end{definition}

Since the norm on a representable Mackey functor is easy to compute, the following is immediate.
\begin{proposition}\label{prop:SimplicialNormofFree}
For any finite simplicial \(H\)-set \(T_\bullet\), we have a natural isomorphism of simplicial \(G\)-Mackey functors
\[
N_H^G\big(\mA^H\cdot T_\bullet\big)\cong \mA^G\cdot \Map_H(G,T_\bullet).
\]
\end{proposition}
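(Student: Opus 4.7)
The plan is to reduce the statement to the already-established isomorphism in Proposition~\ref{prop:NormofProjectives} by working one simplicial degree at a time and then checking compatibility with face and degeneracy maps.

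First I would unpack the definitions at a fixed simplicial degree $k$. By the levelwise definition of the norm on simplicial Mackey functors and of $\mA\cdot(\mhyphen)$, we have
\[
\bigl(N_H^G(\mA^H\cdot T_\bullet)\bigr)_k = N_H^G\bigl(\mA^H_{T_k}\bigr),
\qquad
\bigl(\mA^G\cdot \Map_H(G,T_\bullet)\bigr)_k = \mA^G_{F_H(G,T_k)},
\]
since the levelwise value of $\Map_H(G,T_\bullet)$ is $F_H(G,T_k)$ by construction. Proposition~\ref{prop:NormofProjectives} supplies a natural isomorphism
\[
\varphi_{T_k}\colon N_H^G\bigl(\mA^H_{T_k}\bigr)\xrightarrow{\cong} \mA^G_{F_H(G,T_k)}
\]
for each finite $H$-set $T_k$, and these assemble into the desired levelwise isomorphism.

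The remaining point is to verify that the family $\{\varphi_{T_k}\}_{k\geq 0}$ intertwines the simplicial operators. Since the face and degeneracy maps of $\mA^H\cdot T_\bullet$ are obtained by applying the covariant functor $\mA^H_{(\mhyphen)}\colon T\mapsto \mA^H_T$ to the structure maps of $T_\bullet$, and those of $\mA^G\cdot \Map_H(G,T_\bullet)$ are obtained by applying $\mA^G_{(\mhyphen)}$ to the structure maps of the coinduced simplicial $G$-set $\Map_H(G,T_\bullet)$, it suffices to observe that $\varphi$ is natural in the variable $T$. This naturality is exactly the content of Proposition~\ref{prop:NormofProjectives} combined with the fact that $F_H(G,\mhyphen)$ is functorial in $H$-set maps; concretely, for an $H$-equivariant map $f\colon T\to T'$, both $\varphi_{T'}\circ N_H^G(\mA^H_f)$ and $\mA^G_{F_H(G,f)}\circ\varphi_T$ agree, because under the identifications in the proof of Proposition~\ref{prop:NormofProjectives} (passing through Hoyer's left Kan extension description, Definition~\ref{def:normhoy}) the norm of a representable morphism is computed by applying $F_H(G,\mhyphen)$ and then taking the induced map of representables.

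The only point requiring real care is this naturality verification, and it is where one might expect the main obstacle. However, since the norm on Mackey functors is itself defined by a left Kan extension along $F_H(G,\mhyphen)$, the isomorphism $\varphi_T$ is produced by applying the universal property of that Kan extension to representables and is therefore tautologically natural in $T$. Once naturality is in hand, the levelwise isomorphisms automatically commute with the structure maps of the two simplicial Mackey functors, yielding the claimed isomorphism of simplicial $G$-Mackey functors.
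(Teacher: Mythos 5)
Your proposal is correct and is essentially the paper's own argument: the paper simply declares the result immediate from the levelwise identification $N_H^G\mA^H_{T_k}\cong\mA^G_{F_H(G,T_k)}$ of Proposition~\ref{prop:NormofProjectives}, and your write-up spells out the naturality in $T$ (via the left Kan extension description along $F_H(G,-)$) that makes the levelwise isomorphisms commute with the simplicial structure maps.
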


Homotopy invariance of the norm then follows from two observations.

\begin{proposition}
Let \(\mM_\bullet\) and \(\m{N}_\bullet\) be simplicial Mackey functors and let
\[
f, f'\colon\mM_\bullet\to \m{N}_\bullet
\]
be two maps of simplicial Mackey functors. Then a simplicial homotopy \(F\) from \(f\) to \(f'\) is a map of simplicial Mackey functors
\[
\mM_\bullet\Box\big( \mA\cdot\Delta^1_\bullet\big)\to\m{N}_\bullet
\]
which restricts to \(f\oplus f'\) on
\[
\mM_\bullet\oplus\mM_\bullet\cong\mM\Box\big(\mA\cdot\partial \Delta^1_\bullet\big).
\]
\end{proposition}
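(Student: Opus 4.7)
The plan is to recognize this as a direct unwinding of the simplicial enrichment on $\sMack$ promised by Theorem~\ref{thm:simpmackmod}, using the explicit description of $\mA\cdot(\mhyphen)$ together with two formal categorical facts: $\mA$ is the unit for $\Box$, and $\Mack$ is additive so $\Box$ commutes with direct sums in each variable.

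First I would check that for any Mackey functor $\mM$ and any \emph{trivial} finite $G$-set $T$ (one with trivial $G$-action, which we may write as $\coprod_T G/G$), Proposition~\ref{prop:boxrep} together with additivity gives
\[
\mM \Box \mA_T \;\cong\; \mM \Box \Big(\bigoplus_{T} \mA\Big) \;\cong\; \bigoplus_{T} \mM.
\]
Every simplex of $\Delta^1$ (and of $\partial\Delta^1$) is a trivial $G$-set, so applying this levelwise yields natural isomorphisms of simplicial Mackey functors
\[
\mM_\bullet \Box (\mA\cdot \Delta^1_\bullet) \;\cong\; \bigoplus_{\Delta^1_\bullet}\mM_\bullet,
\qquad
\mM_\bullet \Box (\mA\cdot \partial\Delta^1_\bullet) \;\cong\; \mM_\bullet\oplus\mM_\bullet,
\]
where the right-hand sides are the standard simplicial tensors $\mM_\bullet\otimes\Delta^1$ and $\mM_\bullet\otimes\partial\Delta^1$ coming from the simplicial enrichment on $\sMack$.

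Next I would unpack what a map of simplicial Mackey functors out of this object amounts to. By the coproduct decomposition above, such a map is given in degree $n$ by a tuple of maps $h_i\colon \mM_n\to\m{N}_n$ indexed by the $n+2$ order-preserving maps $[n]\to[1]$, and naturality in the face and degeneracy maps of $\Delta^1_\bullet$ translates into the classical Moore-style identities for a simplicial homotopy. The two inclusions $\Delta^0\hookrightarrow\Delta^1$ identify the two summands of $\mM_\bullet\oplus\mM_\bullet$ with the endpoint data, so the restriction condition is precisely the requirement that the endpoints are $f$ and $f'$.

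The only thing to be careful about is a bookkeeping point: $\mA\cdot(\mhyphen)$ is defined using the transfer-extended functor $\mA_{(\mhyphen)}$ on finite $G$-sets, so the identification $\mA_T\cong\bigoplus_T\mA$ requires the triviality of the $G$-action on $T$ to avoid nontrivial transfers entering. Once this is noted, both sides reduce to $\bigoplus_{\Delta^1_\bullet}\mM_\bullet$ and the statement reduces to the standard fact that a simplicial homotopy is the same data as a map out of the simplicial cylinder, so no further work is required.
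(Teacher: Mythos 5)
The paper states this proposition without proof, treating it as the standard identification of a simplicial homotopy with a map out of the cylinder object; your argument supplies exactly the details the paper leaves implicit. Your reduction via $\mM\Box\mA_T\cong\bigoplus_T\mM$ for a trivial $G$-set $T$ (so that $\mM_\bullet\Box(\mA\cdot\Delta^1_\bullet)$ becomes the simplicial tensor of $\mM_\bullet$ with $\Delta^1$), the translation of naturality in $\Delta^1_\bullet$ into the Moore identities, and the flag about the trivial $G$-action keeping transfers out of the picture are all correct and constitute the intended argument.
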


\begin{lemma}\label{lem:NormPreservesHomotopies}
The norm preserves simplicial homotopies.
\end{lemma}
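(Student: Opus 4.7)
The plan is to transport the simplicial homotopy across the norm by first applying $N_H^G$ to $F$ and then precomposing with the unit of the restriction–coinduction adjunction applied to $\Delta^1_\bullet$, so as to land back inside an object of the form $(-)\Box(\mA^G\cdot\Delta^1_\bullet)$. The two key inputs are that $N_H^G$ is strong symmetric monoidal on (simplicial) Mackey functors and that Proposition~\ref{prop:SimplicialNormofFree} computes its value on representable simplicial Mackey functors in terms of coinduction of $G$-sets.

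First I would apply $N_H^G$ to $F\colon \mM_\bullet\Box(\mA^H\cdot\Delta^1_\bullet)\to\m{N}_\bullet$ and use strong monoidality together with Proposition~\ref{prop:SimplicialNormofFree} to rewrite the source, giving
\[
N_H^GF\colon N_H^G\mM_\bullet\Box\big(\mA^G\cdot\Map_H(G,\Delta^1_\bullet)\big)\to N_H^G\m{N}_\bullet,
\]
where $\Delta^1_\bullet$ carries the trivial $H$-action. Next I would use the unit $c\colon\Delta^1_\bullet\to\Map_H(G,\Delta^1_\bullet)$ of the coinduction–restriction adjunction on simplicial $G$-sets, concretely the simplicial $G$-map sending $\sigma$ to the constant function at $\sigma$. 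The candidate homotopy is then the composite
\[
\tilde F\colon N_H^G\mM_\bullet\Box(\mA^G\cdot\Delta^1_\bullet)\xrightarrow{\mathrm{id}\Box(\mA^G\cdot c)}N_H^G\mM_\bullet\Box\big(\mA^G\cdot\Map_H(G,\Delta^1_\bullet)\big)\xrightarrow{N_H^GF}N_H^G\m{N}_\bullet.
\]

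Finally, I would verify the endpoint condition. Since $\Map_H(G,\mathrm{pt})=\mathrm{pt}$, the map $c$ restricts on each boundary vertex to a canonical isomorphism $c|_{\{i\}}\colon\{i\}\xrightarrow{\cong}\Map_H(G,\{i\})$. Naturality of the isomorphism in Proposition~\ref{prop:SimplicialNormofFree} under the inclusion $\{i\}\hookrightarrow\Delta^1_\bullet$ identifies the restriction of $N_H^GF$ along this vertex with $N_H^Gf$ (respectively $N_H^Gf'$), so $\tilde F$ restricts to $N_H^Gf\oplus N_H^Gf'$ on $N_H^G\mM_\bullet\Box(\mA^G\cdot\partial\Delta^1_\bullet)$, exhibiting it as a simplicial homotopy from $N_H^Gf$ to $N_H^Gf'$.

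The main subtlety is that the norm is not additive, so in particular $N_H^G(\mA^H\cdot\partial\Delta^1_\bullet)$ is \emph{not} $\mA^G\oplus\mA^G$ but rather $\mA^G\cdot\{0,1\}^{H\backslash G}$ with the permutation $G$-action; one must verify that the constants inclusion $c$ selects precisely the two ``diagonal'' points, which are the images of $\Map_H(G,\{0\})$ and $\Map_H(G,\{1\})$ inside $\Map_H(G,\partial\Delta^1_\bullet)$. Once this book-keeping is unraveled, the verification is formal, and the construction of $\tilde F$ is manifestly functorial in the homotopy $F$.
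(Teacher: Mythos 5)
Your proof is correct and follows essentially the same route as the paper: apply the strong symmetric monoidal norm to $F$, identify the norm of $\mA^H\cdot\Delta^1_\bullet$ via Proposition~\ref{prop:SimplicialNormofFree}, and restrict along the diagonal (constants) map $\Delta^1_\bullet\hookrightarrow\Map_H(G,\Delta^1_\bullet)$. Your explicit verification of the endpoint condition and the remark about the non-additivity of the norm are welcome elaborations of details the paper leaves implicit.
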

\begin{proof}
The norm functor, being strong symmetric monoidal, gives a map 
\[
N_H^G(F)\colon N_H^G\mM_\bullet\Box N_H^G\big(\mA\cdot\Delta^1_\bullet\big)\to N_H^G\m{N}_\bullet.
\]
We can rewrite the source using Proposition~\ref{prop:SimplicialNormofFree}:
\[
N_H^G\big(\mA\cdot\Delta^1_\bullet\big)\cong\mA\cdot\Map_H(G,\Delta^1_\bullet).
\]
Restricting along the diagonal map
\[
\Delta^1_\bullet\hookrightarrow\Map_H(G,\Delta^1_\bullet)
\]
then gives the desired simplicial homotopy.
\end{proof}

\begin{corollary}\label{cor:NormHomotopical}
The norm on simplicial Mackey functors takes cofibrant objects to cofibrant objects and preserves weak equivalences between cofibrant objects.
\end{corollary}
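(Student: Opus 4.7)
I would break the corollary into its two clauses and handle each via the machinery already assembled: Proposition~\ref{prop:NormofProjectives} for the cofibrancy claim, and Lemma~\ref{lem:NormPreservesHomotopies} for the homotopy invariance.

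For preservation of cofibrant objects, I would invoke Proposition~\ref{prop:cofibrant}(2), which says that a simplicial Mackey functor is cofibrant if and only if it is levelwise projective. Thus it suffices to show that $N_{H}^{G}$ sends projective $H$-Mackey functors to projective $G$-Mackey functors. Every functor preserves retracts, and by Proposition~\ref{prop:NormofProjectives} we have $N_{H}^{G}\mA^{H}_{T}\cong\mA^{G}_{F_{H}(G,T)}$; hence the norm of any finitely generated projective---being a retract of some representable $\mA^{H}_{T}$ on a finite $H$-set---is a retract of a representable, so still projective. Arbitrary projectives are filtered colimits of their finitely generated subobjects, and the norm, defined as a left Kan extension (Definition~\ref{def:normhoy}), preserves filtered colimits, so the conclusion extends.

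For preservation of weak equivalences between cofibrant objects, I would use the standard model-categorical principle that a weak equivalence between cofibrant-fibrant objects in a simplicial model category is a simplicial homotopy equivalence. Since by Proposition~\ref{prop:cofibrant}(1) every simplicial Mackey functor is fibrant, a weak equivalence $f\colon\mM_{\bullet}\to\m{N}_{\bullet}$ between cofibrant objects is automatically a simplicial homotopy equivalence, with some inverse $g$ and simplicial homotopies $F,F'$ witnessing $gf\simeq\mathrm{id}$ and $fg\simeq\mathrm{id}$ in the sense defined in the paper. Applying $N_{H}^{G}$ and invoking Lemma~\ref{lem:NormPreservesHomotopies}, these homotopies transport along the norm, so $N_{H}^{G}f$ is a simplicial homotopy equivalence, and in particular a weak equivalence.

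The principal obstacle is the first clause, which hinges on the non-additivity of the norm on Mackey functors: in general $N_{H}^{G}(\m{A}\oplus\m{B})\neq N_{H}^{G}\m{A}\oplus N_{H}^{G}\m{B}$, so one cannot argue componentwise on a direct-sum decomposition. Funnelling projectivity through retracts of single (possibly large) representables, and handling size via the filtered-colimit preservation of the Hoyer norm, is what circumvents this non-additivity.
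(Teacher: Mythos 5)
Your overall strategy coincides with the paper's: reduce the cofibrancy clause to preservation of projectives via Proposition~\ref{prop:cofibrant} and Proposition~\ref{prop:NormofProjectives}, and deduce the second clause from the fact that a weak equivalence between objects that are both fibrant and cofibrant is a simplicial homotopy equivalence, which the norm preserves by Lemma~\ref{lem:NormPreservesHomotopies}. The second clause is handled correctly and exactly as in the paper, as is the reduction of the first clause to finitely generated projectives (a finite direct sum of representables is again representable, and every functor preserves retracts).

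The gap is in your passage from finitely generated projectives to arbitrary ones. First, the finitely generated subobjects of a projective Mackey functor need not themselves be projective, so exhibiting a projective as a filtered colimit of finitely generated subobjects does not place you in the situation your representable argument covers. Second, and more seriously, even a filtered colimit of projectives is in general only flat, not projective ($\Q$ over $\Z$ is the standard example); the paper is itself careful on exactly this point, concluding only flatness from ``filtered colimit of projectives'' in the proof of the relative Tor proposition. So ``the norm preserves filtered colimits'' cannot by itself deliver projectivity of $N_H^G\mP$, and your final sentence does not follow. The repair is to use that an arbitrary projective is a retract not of a filtered colimit of small projectives but of a (possibly infinite) direct sum of representables, i.e.\ of $\mA_S$ for $S=\coprod_i T_i$ a possibly infinite $H$-set. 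Writing $\mA_S$ as the colimit over finite sub-$H$-sets $S'\subset S$ of $\mA_{S'}$ and applying the norm, the transition maps $\mA_{F_H(G,S')}\to\mA_{F_H(G,S'')}$ are induced by the split inclusions of $G$-sets $F_H(G,S')\hookrightarrow F_H(G,S'')$, so the colimit is the direct sum of the representables on the $G$-orbits of $F_H(G,S)$ --- a direct sum of representables, hence projective, and not merely flat. Since any functor preserves retracts, $N_H^G$ of an arbitrary projective is then projective, and the first clause follows.
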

\begin{proof}
Proposition~\ref{prop:cofibrant} shows that a simplicial Mackey functor is cofibrant if and only if it is levelwise projective, and Proposition~\ref{prop:NormofProjectives} shows that the norm preserves the generating projectives. This shows that the norm on simplicial Mackey functors preserves cofibrant objects.

For the second part, any weak equivalence between cofibrant simplicial Mackey functors is a homotopy equivalence. Lemma~\ref{lem:NormPreservesHomotopies} shows that the norm preserves these.
\end{proof}

\section{Relationship to topological Hochschild homology}\label{sec:THH}

Classically, for a ring $A$ the Dennis trace map from the algebraic $K$-theory of $A$ to the Hochschild homology of $A$ factors  through the topological Hochschild homology of the Eilenberg-MacLane spectrum $HA$:
$$
K_k(A) \to \pi_k(\THH(HA)) \to \HH_k(A).
$$
The map from THH to its algebraic analog HH is called linearization, and this linearization map is an isomorphism in degree 0:
$$
\xymatrix{
 \pi_0(\THH(HA)) \ar[r]^{\cong} &\HH_0(A).}
$$
More generally, for a $(-1)$-connected ring spectrum $R$ there is a linearization map
$$
\xymatrix{
 \pi_k(\THH(R)) \ar[r]&\HH_k(\pi_0R)},
$$
which is again an isomorphism in degree 0. 

Let $H \subset S^1$ be a finite subgroup. For an $H$-equivariant ring spectrum $R$, one can form the $H$-relative THH of $R$, THH$_H(R)$, as defined in \cite{normTHH}. In this section we prove that our Hochschild homology for Green functors is an algebraic analog to this relative topological Hochschild homology. For instance, we will prove that it receives a linearization map from relative THH, and that this map is an isomorphism in degree 0. 

In approximating algebraic $K$-theory, the equivariant homotopy groups of topological Hochschild homology also play a crucial role. Our new constructions are algebraic approximations to these topological theories. We make this relationship precise in this section.

\subsection{Comparison to topological Hochschild homology}\label{ssec:ComptoTHH}
 
 A key connection between topological Hochschild homology and our new algebraic constructions comes from the close relationship between the algebraic
norm in Mackey functors and the Hill-Hopkins-Ravenel norm in spectra
(recall Theorem~\ref{thm:hoyer}).  This correspondence allows us to
establish a tight connection between the twisted cyclic nerve and
the $H$-relative topological Hochschild homology $\THH_H$ considered in
\cite{normTHH}.

\begin{theorem}\label{thm:ConnectiontoTHH}
Let $H\subset G\subset S^1$ be finite subgroups and let $R$ be a
$(-1)$-connected commutative ring orthogonal $H$-spectrum. We have a
natural isomorphism
\[
\m{\pi}_0^G\big(\THH_H\!(R)\big)\cong \m{\HH}_H^G(\m{\pi}_0^H R)_0.
\]
\end{theorem}

\begin{proof}
For any $H$-commutative ring spectrum $R$, we have a natural weak
equivalence
\[
i_G^\ast \THH_H\!(R)\simeq (N_H^G R)\sm{N_H^G R^e} {}^{g}(N_H^G R),
\]
where just as above, $^g(N_H^G R)$ is just $N_H^G R$ with a bimodule
structure twisted on one side by the Weyl action. If $R$ is
$(-1)$-connected, then so is $N_H^G R$, and for degree reasons the
K\"unneth spectral sequence gives
\[
\m{\pi}_0^G\Big((N_H^G R)\sm{N_H^G R^e} {}^{g}(N_H^G R)\Big)\cong
\big(\m{\pi}_0^G N_H^G R\big)\boxover{\big(\m{\pi}_0^G N_H^G
  R\big)^{e}} {}^{g}\big(\m{\pi}_0^G N_H^G R\big),
\]
which is exactly $\m{\HH}_H^G \big(\m{\pi}_0R\big)_0$.
\end{proof}

More generally, we have a natural homomorphism from the $H$-relative \(TR\) groups to our twisted Hochschild homology, refining the classical trace map.

\begin{theorem}\label{thm:THHEdgeMap}
For $H\subset G \subset S^1$ finite subgroups, and $R$ a $(-1)$-connected $H$-equivariant commutative ring spectrum, we have a natural homomorphism
\[
\underline{\pi}^G_{k} \THH_{H}(R)\to \m{\HH}_{H}^{G}(\m{\pi}^{H}_0R)_k.
\]
\end{theorem}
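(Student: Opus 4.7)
My plan is to construct the map as an equivariant lift of the classical linearization $\pi_k\THH(R) \to \HH_k(\pi_0 R)$: push $\THH_H(R)$ forward along the Postnikov truncation of $R$, and then use Proposition~\ref{prop:pizerosym} to convert the smash-product cyclic bar construction into the box-product cyclic bar construction. The target $\m{\HH}_H^G(\m{\pi}_0^H R)_k$ is by definition the $k$th homology of the simplicial Mackey functor $\HC{G}_H(\m{\pi}_0^H R)_\bullet$, so it is enough to produce a map of simplicial $G$-spectra from a cyclic-bar model of $i_G^*\THH_H(R)$ to the (levelwise) Eilenberg--Mac Lane $G$-spectrum on $\HC{G}_H(\m{\pi}_0^H R)_\bullet$, then geometrically realize and take $\m{\pi}_k^G$.

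Concretely, I would begin with the equivalence $i_G^*\THH_H(R)\simeq |X_\bullet|$ where $X_k=(N_H^G R)^{\wedge(k+1)}$ with the standard Hochschild face and degeneracy structure except that $d_k$ rotates the last factor to the front, applies the Weyl generator $g=e^{2\pi i/|G|}$, and multiplies. Since $R$ and hence $N_H^G R$ is $(-1)$-connected, Postnikov truncation gives a map of commutative ring $G$-spectra
\[
\tau\colon N_H^G R \longrightarrow H\,\m{\pi}_0^G(N_H^G R) = H\,N_H^G(\m{\pi}_0^H R),
\]
the last identification being Definition~\ref{def:normhhr}. Setting $\m{R}:=N_H^G\m{\pi}_0^H R$ and applying $\tau$ to each smash factor yields a map of simplicial $G$-spectra $X_\bullet\to Y_\bullet$ with $Y_k=(H\m{R})^{\wedge(k+1)}$. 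Iterating Proposition~\ref{prop:pizerosym} together with a further Postnikov truncation then produces, level by level, a map $Y_k\to H(\m{R}^{\Box(k+1)})=H\,\HC{G}_H(\m{\pi}_0^H R)_k$ which assembles into a map of simplicial $G$-spectra. Realizing and taking $\m{\pi}_k^G$ gives
\[
\m{\pi}_k^G\THH_H(R)\longrightarrow H_k\bigl(\HC{G}_H(\m{\pi}_0^H R)_\bullet\bigr)=\m{\HH}_H^G(\m{\pi}_0^H R)_k,
\]
using the standard identification of equivariant homotopy of the realization of a simplicial Eilenberg--Mac Lane $G$-spectrum with the homology of the associated normalized chain complex of Mackey functors.

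The main obstacle is the bookkeeping needed to confirm that the twisted face map $d_k$ is respected at every stage of the comparison. Both $\tau$ and the smash-to-box comparison $(H\m{R})^{\wedge(k+1)}\to H(\m{R}^{\Box(k+1)})$ are natural in maps of commutative ring $G$-spectra, and the $g$-action on $N_H^G R$ is itself by maps of commutative ring $G$-spectra, so the rotation-and-$g$-action-then-multiply recipe defining $d_k$ commutes with every comparison by functoriality; the same remark handles the ordinary face maps $d_i$ for $i<k$ and the degeneracies. Once this compatibility is verified, the construction is manifestly natural in $R$, and specializing to $k=0$ recovers the isomorphism of Theorem~\ref{thm:ConnectiontoTHH}, so the map above is precisely the promised simplicial-degree refinement of that result.
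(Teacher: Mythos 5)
Your construction is correct and is essentially the paper's argument: the paper obtains the map as the edge homomorphism of the spectral sequence $E^2_{p,q}=H_p(\m{\pi}_q(E_\bullet))\Rightarrow\m{\pi}_{p+q}|E_\bullet|$ for the simplicial spectrum $E_\bullet=(N_H^G R)^{\wedge(\bullet+1)}$, which is exactly the map induced by your levelwise Postnikov truncation to the simplicial Eilenberg--Mac~Lane object, and both arguments then rest on the same degree-zero K\"unneth collapse (Proposition~\ref{prop:pizerosym}) identifying $\m{\pi}_0\big((N_H^G R)^{\wedge(\bullet+1)}\big)$ with $\HC{G}_H(\m{\pi}_0^H R)_\bullet$ as simplicial Green functors, twisted last face included.
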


\begin{proof}
Recall that for any simplicial spectrum \(E_{\bullet}\), we have a spectral sequence 
\[
E^{2}_{p,q}=H_{p}\big(\pi_{q}(E_{\bullet}\big)\Rightarrow \pi_{p+q}\big(|E_{\bullet}|\big),
\]
where here we have simply applied \(\pi_{q}\) to \(E_{\bullet}\) to get a simplicial abelian group \cite[X.2.9]{EKMM}. The differentials are homology Serre type, and hence if for all \(k\), \(E_{k}\) is \((-1)\)-connected, then we have an edge homomorphism
\[
\pi_{p}\big(|E_{\bullet}|\big)\to H_{p}\big(\pi_{0}(E_{\bullet})\big).
\]

The proof is by the filtration by the skeleton, and goes through without change equivariantly. Applied to our context, we then have a spectral sequence with
\[
E^{2}_{p,q}=H_{p}\Big(\m{\pi}_{q}\big((N_{H}^{G}R)^{\wedge (\bullet+1)}\big)\Big)\Rightarrow \m{\pi}_{p+q}\THH_{H}(R).
\]
The edge homomorphism is
\[
\m{\pi}_{p}\THH_{H}(R)\to H_{p}\Big(\m{\pi}_{0}\big((N_{H}^{G}R)^{\wedge (\bullet+1)}\big)\Big).
\]
The collapse of the K\"unneth spectral sequence in degree zero, together with the definition of the norm in Mackey functors, then shows that we have an isomorphism of simplicial Green functors
\[
\m{\pi}_{0}\big((N_{H}^{G}R)^{\wedge (\bullet+1)}\big)\cong \HC{G}_{H}(\m{\pi}_{0}R)_{\bullet},
\]
as desired. 
\end{proof}

In the case where $H$ is the trivial group, and hence $R$ is a ring, the linearization map of Theorem~\ref{thm:THHEdgeMap} yields new lifts of the Dennis trace. 

\begin{corollary}\label{cor:TraceMap}
For $R$ a commutative ring, and any finite \(G\subset S^{1}\), we have a lift of the Dennis trace
\[
K_{q}(R)\to \m{\HH}_{e}^{G}(R)(G/G)_q.
\]

\end{corollary}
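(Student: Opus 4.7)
The plan is to realize the asserted lifted Dennis trace as the composite of two already-available maps: the cyclotomic trace from $K$-theory into the genuine $G$-fixed points of $\THH$, and the linearization edge map supplied by Theorem~\ref{thm:THHEdgeMap}.

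First, I would recall that for a commutative ring $R$ the Eilenberg--Mac Lane spectrum $HR$ is $(-1)$-connected, and $\THH(HR)$ carries a natural structure of cyclotomic $S^{1}$-spectrum. By the Bökstedt--Hsiang--Madsen construction of the cyclotomic trace, for each finite subgroup $G\subset S^{1}$ one obtains a natural map
\[
K_{q}(R)\longrightarrow \pi_{q}\big(\THH(HR)^{G}\big)
\;=\;\big(\m{\pi}_{q}^{G}\,\THH_{e}(HR)\big)(G/G),
\]
which reduces to the classical Dennis trace $K_{q}(R)\to \pi_{q}\THH(HR)$ when $G$ is trivial. (Here I am using that $\THH_{e}$, in the notation of Section~\ref{ssec:ComptoTHH}, is the $S^{1}$-equivariant $\THH$ obtained as $N_{e}^{S^{1}}$ applied to the input ring spectrum.)

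Second, with $H=e$ (the trivial group), Theorem~\ref{thm:THHEdgeMap} applied to the commutative ring $G$-spectrum $HR$ furnishes a natural linearization homomorphism
\[
\m{\pi}_{q}^{G}\,\THH_{e}(HR)\longrightarrow \m{\HH}_{e}^{G}\!\big(\m{\pi}_{0}^{e}HR\big)_{q}\;=\;\m{\HH}_{e}^{G}(R)_{q},
\]
of Mackey functors. Evaluating at the orbit $G/G$ and precomposing with the cyclotomic trace from the previous step gives the desired homomorphism
\[
K_{q}(R)\longrightarrow \m{\HH}_{e}^{G}(R)(G/G)_{q}.
\]

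Finally, one verifies the ``lift'' claim by naturality in $G$. For $G=e$ the cyclotomic trace is by construction the classical Dennis trace, and the linearization map of Theorem~\ref{thm:THHEdgeMap} specializes to the classical linearization map $\pi_{q}\THH(HR)\to \HH_{q}(R)$; hence the composite agrees with the usual Dennis trace. The main subtlety, and the only real thing to check, is that the cyclotomic trace of Bökstedt--Hsiang--Madsen is natural in the choice of finite subgroup $G\subset S^{1}$ so that the resulting family of lifts is itself compatible under the restriction maps relating different $G$; this compatibility is standard and is what makes it meaningful to speak of a single lift rather than an unrelated family. No further calculation is required once the cyclotomic trace is in hand, since Theorem~\ref{thm:THHEdgeMap} does all the algebraic work.
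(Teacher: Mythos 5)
Your proposal is correct and follows the same route as the paper: compose the B\"okstedt--Hsiang--Madsen trace $K_q(R)\to\pi_q(\THH(HR)^G)$ with the edge/linearization map of Theorem~\ref{thm:THHEdgeMap} evaluated at $G/G$. The only cosmetic difference is that the paper verifies the ``lift'' claim by checking that the composite followed by the Mackey restriction $res^G_e$ (evaluation at $G/e$) recovers the classical Dennis trace, rather than by specializing the construction to $G=e$; the content is the same.
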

\begin{proof} The following diagram commutes, where the top is the composite of  the B\"okstedt-Hsiang-Madsen trace map and the edge map from Theorem~\ref{thm:THHEdgeMap}. The bottom composite is the classical Dennis trace. 
\[
\xymatrix{& \pi_q(\THH(R)^G)\cong \m{\pi}_q^G\THH(R)(G/G) \ar[r] \ar[dd]^{res_e^G} &  \m{\HH}_{e}^{G}(R)_q(G/G) \ar[dd]^{res_e^G} \\
\pi_{q}K(R) \ar[ur]^{trc} \ar[dr]& & \\
& \pi_q\THH(R)\cong  \m{\pi}_q^G\THH(R)(G/e) \ar[r] &\HH_q(R) \cong \m{\HH}_e^G(R)_q(G/e)
}
\]
\end{proof}

The topological Dennis trace map from algebraic $K$-theory to THH lifts through another invariant of rings (or ring spectra), topological cyclic homology, TC: 
$$
K(R) \to \textup{TC}(R) \to \THH(R). 
$$
Defining topological cyclic homology from THH relies on the fact that THH(R) is a cyclotomic spectrum. Roughly, an $S^1$-spectrum $X$ is cyclotomic if for all $n$ there are compatible equivalences of $S^1$-spectra $r_n: \rho_n^*\Phi^{C_n} X \to X$, where $\Phi^{C_n}$ denotes $C_n$ geometric fixed points. Here $\rho_n: S^1 \cong S^1/C_n$ is the $n$th root isomorphism.  (see \cite{BMcyclo} for more details about cyclotomic spectra).

Our twisted cyclic nerve also exhibits a type of cyclotomic structure. In order to study this, we first need to define a notion of geometric fixed points for Mackey functors.

\subsection{Geometric Fixed Points in Mackey Functors}\label{sec:GeomFP}

Our construction of the twisted Hochschild complex of a Green functor
will have a ``cyclotomic'' structure, relating the geometric fixed
points to the original complex.  To explain what we mean by cyclotomic
here, we must first describe geometric fixed points in the derived
category of Mackey functors.  This is related to Remark 6.5 of \cite{HillSlice} and B.7 of \cite{Barwick}. We will define our version in terms of a particular
choice of point-set model for the derived functor.

The material in this section works for all finite groups; we let $\mA$
denote the Burnside Mackey functor for the finite group $G$.

\begin{definition}\label{def:TildeEFofA}
Fix a finite group $G$ and let $N \subset G$ be a normal subgroup.
Let $\EF_{N}(\mA)$ denote the sub-Mackey functor of $\mA$ generated by
$\mA(G/H)$ for all subgroups $H$ which do not contain $N$.  Finally,
let
\[
\tildeEF_{N}(\mA)=\mA/\EF_{N}(\mA).
\]
\end{definition}

If $H$ does not contain $N$, then by construction
$\tildeEF_{N}(\mA)(G/H)=0$, while if $H$ does contain $N$, then
$\tildeEF_{N}(\mA)(G/H)$ is the quotient of $\mA(G/H)$ by the
transfers from any subgroup that does not contain $N$. In other words,
\[
\tildeEF_{N}(\mA)(G/H)=\begin{cases} 0 & N\not\subset H
\\ \mA\big((G/N)/(H/N)\big) & N\subset H.
\end{cases}
\]

Our definition of the geometric fixed points will be a composite; we
begin by taking the box product with $\tildeEF_{N}$.

\begin{definition}\label{def:TildeEF}
If $\mM$ is a $G$-Mackey functor and $N$ is a normal subgroup of $G$,
then let
\[
\tildeEF_{N}\mM=\mM\Box \tildeEF_{N}\mA.
\]

If $\mM_{\bullet}$ is a dg-$G$-Mackey functor, then let
\[
(\tildeEF_{N}\mM)_{n}=\mM_{n}\Box \tildeEF_{N}\mA,
\]
with the obvious differential.
\end{definition}

This is closely connected to the topological object by the same
name. Associated to $N$ is a smashing localization which nullifies all
spectra induced up from subgroups that do not contain $N$, the value
of which on the sphere is commonly denoted $\tildeEF_{N}$. The
following is immediate from direct considerations of the nullification
functor.

\begin{proposition}\label{prop:tildeEF}
For any Mackey functor $\mM$, we have
\[
\m{\pi}_0 (\tildeEF_N \sma H\mM)\cong \tildeEF_N\mM,
\]
where here $\tildeEF_N$ on the left denotes a cofibrant model of the
universal space for the family of subgroups not containing $N$.
\end{proposition}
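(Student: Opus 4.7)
The plan is to reduce via the degree-zero K\"unneth isomorphism to the identification of $\m{\pi}_0 \tildeEF_{N}$ with the algebraically defined $\tildeEF_{N}\mA$, after which the statement is immediate. Both $\tildeEF_{N}$ (the suspension spectrum of a based $G$-CW complex of dimension $\ge 0$) and $H\mM$ can be taken to be cofibrant and $(-1)$-connected, so Proposition~\ref{prop:pizerosym} produces a natural isomorphism
\[
\m{\pi}_0\bigl(\tildeEF_{N}\wedge H\mM\bigr)\cong \m{\pi}_0 \tildeEF_{N}\,\Box\,\m{\pi}_0 H\mM\cong \m{\pi}_0 \tildeEF_{N}\,\Box\,\mM.
\]
Combining this with Definition~\ref{def:TildeEF}, it remains only to show that $\m{\pi}_0 \tildeEF_{N}\cong \tildeEF_{N}\mA$ as $G$-Mackey functors.

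For that identification, I would use the defining cofiber sequence $\EF_{N+}\to S^0\to \tildeEF_{N}$ of $G$-spectra. Every term is $(-1)$-connected, so the long exact sequence of homotopy Mackey functors truncates to a right exact sequence
\[
\m{\pi}_0 \EF_{N+}\longrightarrow \mA\longrightarrow \m{\pi}_0 \tildeEF_{N}\longrightarrow 0.
\]
Choose a $G$-CW model for $\EF_{N}$ whose cells are of orbit type $G/K$ for subgroups $K$ not containing $N$. A cellular argument should then identify the image of $\m{\pi}_0 \EF_{N+}$ inside $\mA$ with the sub-Mackey functor of $\mA$ generated by $\mA(G/K)$ for such $K$, which is precisely $\EF_{N}(\mA)$ in the sense of Definition~\ref{def:TildeEFofA}. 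Hence $\m{\pi}_0 \tildeEF_{N}\cong \mA/\EF_{N}(\mA)=\tildeEF_{N}\mA$, as required.

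The main technical step is matching the image of $\m{\pi}_0 \EF_{N+}\to \mA$ with $\EF_{N}(\mA)$ on the nose. The easy containment is that each cellular inclusion $G/K_+\hookrightarrow \EF_{N+}$ with $K$ not containing $N$ induces a map $\mA_{G/K}\cong \m{\pi}_0(G/K_+)\to \m{\pi}_0 \EF_{N+}\to \mA$, so the image contains all transfers from $\mA(G/K)$, and hence contains the sub-Mackey functor they generate. The reverse containment follows from the skeletal filtration of $\EF_{N+}$: the successive cofibers are wedges of $G/K_+\wedge S^n$ for $K$ not containing $N$, so inductively $\m{\pi}_0 \EF_{N+}$ is generated as a Mackey functor by images from $\mA_{G/K}$ for such $K$, and its image in $\mA$ therefore lies inside $\EF_{N}(\mA)$.
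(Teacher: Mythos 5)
Your proof is correct and follows essentially the same route as the paper: both first invoke Proposition~\ref{prop:pizerosym} to reduce to identifying $\m{\pi}_0(\tildeEF_N)$ with $\tildeEF_N\mA$, and then compute that Mackey functor as the quotient of $\mA$ by the images coming from the orbits $G/K_+$ with $K$ not containing $N$. The only difference is cosmetic: where the paper phrases the second step as ``$\m{\pi}_0$ of the nullification of $S^0$ agrees with the nullification in Mackey functors,'' you make the same point explicit via the cofiber sequence $\EF_{N+}\to S^0\to\tildeEF_N$ and the skeletal filtration of $\EF_{N+}$.
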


\begin{proof}
By Proposition~\ref{prop:pizerosym}, we know that we have an
isomorphism
\[
\m{\pi}_0 (\tildeEF_N\wedge H\mM)\cong \m{\pi}_0(\tildeEF_N)\Box \mM.
\]
The space $\tildeEF_N$ is the localization of the zero sphere where we
kill the localizing subcategory generated by $G/H_+$ for
$H$ a subgroup not containing $N$. The zeroth homotopy Mackey functor
of $\Sigma^{\infty}G/H_+$ is by construction
\[
\mA^G_{G/H}.
\]
Since $G/H_+$ agrees with $\mathrm{Ind}_H^G S^0$, we know also that the set of
equivariant homotopy classes of maps
\[
G/H_+\to X
\]
is the group $\m{\pi}_0(X)(G/H)$. In particular, we see that
$\m{\pi}_0$ of the nullification of $S^0$ killing these $G/H_+$ agrees
with the nullification in Mackey functors.  This is exactly the
quotient.
\end{proof}

\begin{remark}
One might think that a good model of $\tildeEF_{N}$ as dg Mackey
functor could be obtained by taking the cellular chains on the
$G$-space $S^{\infty\bar{\rho}}$, where $\bar{\rho}$ is the reduced
regular representation of $G/N$.  However, although there is an
isomorphism
\[
\m{\pi}_0 (\m{C}_* S^{\infty\bar{\rho}}) \cong \tildeEF_N,
\]
the canonical map is {\em not} in general a quasi-isomorphism. For example, if \(G=C_{p}\), then 
\[
\m{\pi}_{2}\big(\m{C}_{\ast} S^{\infty\bar{\rho}}\big)=\m{H}_{2}\big(S^{\infty\bar{\rho}}\big)\cong \tildeEF_{G}/p
\]
is the reduction modulo \(p\) of \(\tildeEF_{G}\).
\end{remark}

The proof of Proposition~\ref{prop:tildeEF} explains how we should
compute our derived geometric fixed points.  Specifically, we want the
nullification in the category of dg Mackey functors which kills the
localizing, triangulated subcategory generated by $\mA_{G/H}$ for $H$
not containing $N$.  From the definition of $\tilde{E}F_{N}\mA$, we
immediately deduce the following proposition which lets us compute
this nullification.

\begin{proposition}\label{prop:pullback}
For any normal subgroup $N$ and for any $dg$-Mackey functor
$\mM_\bullet$, the Mackey functor $\tildeEF_{N}(\mM)_\bullet$ is in
the image of the (fully faithful) pullback $\pi_{N}^{\ast}$ from
$G/N$-$dg$-Mackey functors to $G$-$dg$-Mackey functors.
\end{proposition}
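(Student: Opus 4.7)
The plan is to verify a standard characterization of the image of the inflation functor: \(\pi_N^* \Mack_{G/N}\) is precisely the full subcategory of \(\Mack_G\) consisting of Mackey functors that vanish on every orbit \(G/H\) with \(N \not\subset H\), and the analogous statement holds for dg-Mackey functors levelwise. Granted this characterization, it suffices to show that for each \(n\) and each subgroup \(H\) of \(G\) with \(N \not\subset H\),
\[
(\tildeEF_N \mM)_n(G/H) \;=\; (\mM_n \Box \tildeEF_N\mA)(G/H) \;=\; 0.
\]
The differential on \(\tildeEF_N \mM_\bullet\) is the differential of \(\mM_\bullet\) boxed with the identity on \(\tildeEF_N\mA\), so it is compatible with this levelwise vanishing; the problem therefore reduces to proving the above for a single ungraded Mackey functor \(\mM\).

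For the vanishing, I would invoke the strong symmetric monoidality of the restriction functor \(i_H^*\colon \Mack_G \to \Mack_H\). This holds because restriction on the Burnside category commutes with the Cartesian product of \(G\)-sets, so the Kan-extension formula for \(\Box\) recorded in Section~\ref{ssec:BoxProduct} yields a natural isomorphism
\[
i_H^*\bigl(\mM \Box \tildeEF_N\mA\bigr)\;\cong\;(i_H^*\mM) \Box (i_H^*\tildeEF_N\mA).
\]
For \(N\not\subset H\) the second factor is the zero \(H\)-Mackey functor: every subgroup \(K\subset H\) also fails to contain \(N\), so by Definition~\ref{def:TildeEFofA} we have \((\tildeEF_N\mA)(G/K)=0\) for each such \(K\), whence \(i_H^*\tildeEF_N\mA=0\). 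Evaluating the displayed isomorphism at \(H/H\) gives \((\mM\Box\tildeEF_N\mA)(G/H)=0\), as desired.

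The main obstacle is justifying the characterization of the image of \(\pi_N^*\), namely that vanishing on orbits \(G/H\) with \(N\not\subset H\) already recovers a \(G/N\)-Mackey functor structure on the remaining data. The reconstruction works span by span: every morphism \(G/K \leftarrow U \to G/H\) with \(K, H \supset N\) in \(\cA_{\N}^G\) decomposes as a disjoint union of its orbit summands, and summands whose isotropy does not contain \(N\) are annihilated by any Mackey functor satisfying the vanishing, while the summands whose isotropy does contain \(N\) are precisely the spans in \(\cA_{\N}^{G/N}\). I would either cite this classical equivalence from the literature on inflation of Mackey functors or verify the reconstruction directly on the representable generators; the claimed full faithfulness of \(\pi_N^*\) is a formal consequence of the same characterization.
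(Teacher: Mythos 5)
Your proof is correct and takes essentially the same route the paper intends: the paper states the proposition as immediate from the definition of $\tildeEF_N\mA$, and your argument just makes explicit the two implicit steps, namely the vanishing of $(\mM\Box\tildeEF_N\mA)(G/H)$ for $N\not\subset H$ (via strong symmetric monoidality of $i_H^*$ and $i_H^*\tildeEF_N\mA=0$) and the standard characterization of the image of the inflation $\pi_N^*$ as the Mackey functors vanishing on such orbits. No gaps.
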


\begin{remark}
The preceding proposition is closely related to an observation of
Kaledin that homology is not sufficient to detect the image of the
pullback (see the introduction to~\cite{KaledinMackey}).
\end{remark}

Using Proposition~\ref{prop:pullback} and the fact that the pullback
is an isomorphism onto its image, we can now define the geometric
fixed points.

\begin{definition}\label{def:GFP}
For a normal subgroup $N$, let the {\em geometric fixed points} of a
Mackey functor $\mM$ be defined as
\[
\Phi^{N}(\mM)={\pi_{N}^{\ast}}^{-1}\big(\tilde{E}F_{N}(\mM)\big).
\]
If $\mM_{\bullet}$ is a dg-Mackey functor, then $\Phi^{N} \mM_{\bullet}$ is obtained by applying $\Phi^{N}$ levelwise. 
\end{definition}

Combining the definition with Proposition~\ref{prop:tildeEF} connects
this with the ordinary geometric fixed points.

\begin{corollary}\label{cor:GeomFP}
Given any $G$-Mackey functor $\mM$, we have a natural isomorphism of
$G/N$-Mackey functors
\[
\m{\pi}_0^{G/N} (\Phi^N H\mM)\cong\Phi^N\mM.
\]
\end{corollary}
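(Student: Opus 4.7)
The plan is to match the two sides by tracing through the definitions and reducing everything to Proposition~\ref{prop:tildeEF}. Recall that at the level of spectra, the geometric fixed point functor $\Phi^N$ is modeled by $\Phi^N X \simeq (\tildeEF_N \wedge X)^N$, and, more importantly, the $G/N$-equivariant homotopy groups of $\Phi^N X$ are related to the $G$-equivariant homotopy groups of $\tildeEF_N \wedge X$ by
\[
\pi_N^{\ast} \m{\pi}_k^{G/N}(\Phi^N X) \cong \m{\pi}_k^G(\tildeEF_N \wedge X),
\]
i.e., the $G$-Mackey functor $\m{\pi}_k^G(\tildeEF_N \wedge X)$ lies in the image of the fully faithful pullback $\pi_N^{\ast}$ from $G/N$-Mackey functors to $G$-Mackey functors, and is canonically isomorphic to the pullback of the $G/N$-Mackey functor $\m{\pi}_k^{G/N}(\Phi^N X)$. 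This identification comes from the fact that smashing with $\tildeEF_N$ kills all cells induced from proper subgroups not containing $N$, so that the restrictions $\m{\pi}_k^G(\tildeEF_N \wedge X)(G/H)$ vanish for $N \not\subset H$ and agree with the $H/N$-fixed points of the geometric fixed points when $N \subset H$.

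Specializing to $X = H\mM$ in the $k=0$ case, the source $\m{\pi}_0^G(\tildeEF_N \wedge H\mM)$ is identified by Proposition~\ref{prop:tildeEF} with $\tildeEF_N \mM$. So we obtain a canonical isomorphism
\[
\pi_N^{\ast} \m{\pi}_0^{G/N}(\Phi^N H\mM) \cong \tildeEF_N \mM.
\]
Now Proposition~\ref{prop:pullback} tells us that $\tildeEF_N \mM$ is itself in the image of $\pi_N^{\ast}$, and by Definition~\ref{def:GFP}, the pre-image under $\pi_N^{\ast}$ is exactly $\Phi^N \mM$. Because $\pi_N^{\ast}$ is fully faithful, applying its inverse on its essential image yields the desired natural isomorphism
\[
\m{\pi}_0^{G/N}(\Phi^N H\mM) \cong \Phi^N \mM.
\]

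The routine part is chasing definitions and naturality. The only step that is not completely formal is the spectrum-level identification $\pi_N^{\ast}\m{\pi}_0^{G/N}(\Phi^N X) \cong \m{\pi}_0^G(\tildeEF_N \wedge X)$, which is a standard fact about geometric fixed points and the equivariant homotopy theory of $\tildeEF_N$-local spectra, and is implicitly invoked in the discussion leading to Definition~\ref{def:GFP}. The main obstacle — really just a bookkeeping point — is confirming that the naturality in $\mM$ matches on both sides, which follows because both $\Phi^N$ on Mackey functors and $\m{\pi}_0^{G/N} \circ \Phi^N \circ H$ are defined as composites of functors that are manifestly natural in $\mM$.
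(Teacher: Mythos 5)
Your proof is correct and is essentially the argument the paper intends: the corollary is stated as an immediate consequence of Definition~\ref{def:GFP} together with Proposition~\ref{prop:tildeEF}, and your write-up just makes explicit the standard spectrum-level identification $\pi_N^{\ast}\m{\pi}_0^{G/N}(\Phi^N X)\cong \m{\pi}_0^G(\tildeEF_N\wedge X)$ before invoking full faithfulness of $\pi_N^{\ast}$. Nothing to add.
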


The geometric fixed-points can be derived by cofibrantly replacing
$\mM$ or $\mM_{\bullet}$.  Notice however that we are not using a
cofibrant replacement of $\tildeEF_{N}\mA$ here; although the result
would have quasi-isomorphic box product if we did this,
Proposition~\ref{prop:pullback} would not hold.

\begin{remark}\label{rem:LeftQuillen}
The functor $\Phi^{N}$ is in fact a left Quillen functor. If
$\mP_{\bullet}$ is a dg-Mackey functor such that
$\mP_{n}=\mA^G_{T_{n}}$ for some $G$-set $T$, then
\[
\Phi^{N}(\mP)_{n}=\mA^{G/N}_{T_{n}^{N}}
\]
is again a cofibrant object.
\end{remark}

The image of the transfer is an ideal in Green functors (and in fact,
Frobenius reciprocity says that the transfer is the analogue of an
ideal for any bilinear map), and so the Mackey functor $\tildeEF_N\mA$
is canonically a Green functor. Since pullback is strong symmetric
monoidal, the following proposition is immediate.

\begin{proposition}\label{prop:GFPSym}
The functor $\Phi^{N}$ is a strong symmetric monoidal functor.
\end{proposition}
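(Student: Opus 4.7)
The plan is to reduce the claim to two inputs: first, that $\tildeEF_N\mA$ is an idempotent commutative monoid for $\Box$ (equivalently, that it is the unit pulled back from $G/N$), and second, that the fully faithful pullback $\pi_N^\ast$ is strong symmetric monoidal, as the sentence preceding the statement already asserts. Given these, everything is formal.

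First I would verify the identification $\tildeEF_N\mA \cong \pi_N^\ast \mA^{G/N}$ of Green functors. This is essentially the content of Definition~\ref{def:TildeEFofA}: for $N\not\subset H$ both sides vanish, and for $N\subset H$ we read off $\tildeEF_N\mA(G/H) = \mA^{G/N}\big((G/N)/(H/N)\big)$. One checks that this identification respects restrictions, transfers, and the multiplicative structure induced from $\mA$ (for which the ideal $\EF_N(\mA)$ is closed under box product with anything, since it is generated by transfers from subgroups not containing $N$ and such transfers are killed by Frobenius reciprocity after pairing with a class supported on orbits containing $N$). Consequently $\tildeEF_N\mA\Box\tildeEF_N\mA \cong \pi_N^\ast(\mA^{G/N}\Box\mA^{G/N}) \cong \pi_N^\ast\mA^{G/N} = \tildeEF_N\mA$.

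Next, using idempotence and the symmetry/associativity of $\Box$, for any Mackey functors $\mM,\mN$ I compute
\[
\tildeEF_N\mM \Box \tildeEF_N\mN \;=\; \mM\Box\mN\Box\tildeEF_N\mA\Box\tildeEF_N\mA \;\cong\; \mM\Box\mN\Box\tildeEF_N\mA \;=\; \tildeEF_N(\mM\Box\mN).
\]
Since $\pi_N^\ast$ is strong symmetric monoidal and fully faithful, both sides lie in its essential image, and applying $(\pi_N^\ast)^{-1}$ yields the natural isomorphism $\Phi^N\mM\Box\Phi^N\mN \cong \Phi^N(\mM\Box\mN)$. For units, the identification of step one shows $\Phi^N\mA \cong \mA^{G/N}$, which is the unit in $G/N$-Mackey functors. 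Coherence of associators, unitors, and the symmetry is inherited from that of $\pi_N^\ast$ since $\pi_N^\ast$ reflects all such data.

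The one point that warrants actual care, rather than formal manipulation, is the identification $\tildeEF_N\mA \cong \pi_N^\ast\mA^{G/N}$ as Green functors (not merely as Mackey functors). The underlying Mackey-functor identification is visible from the description of values in Definition~\ref{def:TildeEFofA}, but one must confirm that the quotient of $\mA$ by the ideal $\EF_N(\mA)$ inherits the multiplication compatibly with the pulled-back multiplication on $\pi_N^\ast\mA^{G/N}$. This is the main thing to verify carefully; after it, the remainder of the proof is a single line of formal rearrangement using that $\pi_N^\ast$ is strong symmetric monoidal.
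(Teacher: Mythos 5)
Your argument follows the paper's own route: the paper likewise observes that $\EF_N(\mA)$ is an ideal (via Frobenius reciprocity), so $\tildeEF_N\mA$ is a quotient Green functor of the unit, and then deduces the proposition from the fact that the fully faithful pullback $\pi_N^\ast$ is strong symmetric monoidal. You have simply made explicit the idempotence $\tildeEF_N\mA\Box\tildeEF_N\mA\cong\tildeEF_N\mA$ and the resulting one-line rearrangement that the paper leaves as ``immediate,'' so the proposal is correct and essentially identical in approach.
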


As a consequence, the geometric fixed points functor preserves
multiplicative structures.

\begin{corollary}\label{cor:GFPGreen}
The geometric fixed points functor lifts to a functor
\[
\Phi^{N}\colon\Green_{G}\to\Green_{G/N}.
\]
\end{corollary}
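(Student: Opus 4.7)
The plan is to derive this corollary directly from Proposition~\ref{prop:GFPSym}: any strong symmetric monoidal functor between symmetric monoidal categories sends monoid objects to monoid objects (and commutative monoids to commutative monoids), so once $\Phi^N$ is known to be strong symmetric monoidal, the lift to Green functors is automatic from the abstract nonsense of symmetric monoidal categories.

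To make this concrete, I would proceed as follows. Given an associative Green functor $\mR$, by definition we have a multiplication $\mu\colon \mR\Box\mR\to\mR$ and a unit $\eta\colon\mA\to\mR$ satisfying the associativity and unit axioms. Because $\Phi^N$ is strong symmetric monoidal, there is a natural isomorphism $\Phi^N(\mR)\Box\Phi^N(\mR)\cong \Phi^N(\mR\Box\mR)$, and $\Phi^N$ sends the unit $\mA$ of $G$-Mackey functors to the unit $\mA$ of $G/N$-Mackey functors (up to canonical isomorphism). Applying $\Phi^N$ to $\mu$ and $\eta$ and transporting along these coherence isomorphisms then equips $\Phi^N\mR$ with a multiplication and unit. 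The associativity and unit axioms follow by applying $\Phi^N$ to the corresponding commutative diagrams for $\mR$ and invoking the coherence of the monoidal structure. When $\mR$ is commutative, the analogous argument using the symmetry isomorphism shows that $\Phi^N\mR$ is again commutative.

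Unpacking Proposition~\ref{prop:GFPSym} in the definition of $\Phi^N$, this factors through two steps: first, box product with $\tildeEF_N\mA$ preserves monoid structures because $\tildeEF_N\mA$ is itself a (commutative) Green functor via Frobenius reciprocity, as noted in the paragraph preceding Proposition~\ref{prop:GFPSym}; second, the inverse pullback $(\pi_N^{\ast})^{-1}$ is strong symmetric monoidal because $\pi_N^{\ast}$ itself is strong symmetric monoidal and fully faithful. There is no real obstacle here beyond bookkeeping; the only subtlety is to check that the image indeed lies in the essential image of $\pi_N^{\ast}$ on the level of Green functors, but this is immediate from Proposition~\ref{prop:pullback} applied to the underlying Mackey functors, since the multiplication and unit maps of $\tildeEF_N\mR$ automatically descend to $G/N$-Mackey functor maps once the underlying objects do.
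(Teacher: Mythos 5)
Your argument is correct and is exactly the one the paper intends: the corollary is stated as an immediate consequence of Proposition~\ref{prop:GFPSym}, using the standard fact that a strong symmetric monoidal functor carries (commutative) monoid objects to (commutative) monoid objects. Your additional unpacking of the two steps (box with $\tildeEF_N\mA$, then invert $\pi_N^{\ast}$) is consistent with the paper's construction and adds nothing problematic.
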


To understand the analogue of the cyclotomic structure on our twisted
Hochschild homology, we need to understand the interaction of the
geometric fixed points functor and the norm.

\begin{theorem}\label{thm:GeomFPNorm}
For any $H$-Mackey functor $\mM$, we have an isomorphism of
$G/N$-Mackey functors
\[
\Phi^N N_H^G \mM\cong N_{HN/N}^{G/N} \Phi^{H\cap N} \mM.
\]
\end{theorem}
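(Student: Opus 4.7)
The plan is to reduce the desired isomorphism to a natural identification of finite $G/N$-sets in the representable case, and then extend to arbitrary Mackey functors via the universal property of the norm as a left Kan extension. Throughout write $\tilde N = H\cap N$ and $\tilde H = HN/N \cong H/\tilde N$, the last identification coming from the second isomorphism theorem.

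I would first establish the isomorphism on representables $\mA^H_T$ for $T$ a finite $H$-set. By Proposition~\ref{prop:NormofProjectives} and Remark~\ref{rem:LeftQuillen}, the left-hand side evaluates to $\mA^{G/N}_{F_H(G,T)^N}$, and the right-hand side to $\mA^{G/N}_{F_{\tilde H}(G/N,\, T^{\tilde N})}$. The statement at representables therefore reduces to the natural bijection of $G/N$-sets
\[
F_H(G,T)^N \;\cong\; F_{\tilde H}\bigl(G/N,\, T^{\tilde N}\bigr).
\]
This identification is purely combinatorial: an $H$-equivariant $f\colon G\to T$ is $N$-fixed exactly when $f(gn)=f(g)$ for all $n\in N$, equivalently when $f$ factors through the quotient $G\twoheadrightarrow G/N$ (which is well-defined as an $H$-set quotient by normality of $N$); for $n\in\tilde N$ the identity $n\cdot gN=gN$ then forces the induced $\bar f\colon G/N\to T$ to land in $T^{\tilde N}$, and its $H$-equivariance factors through $\tilde H$. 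The inverse sends a $\tilde H$-equivariant map $G/N\to T^{\tilde N}$ to its pullback along $G\twoheadrightarrow G/N$. One verifies this bijection is natural in $T$ and intertwines the right-translation $G/N$-actions on both sides.

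To extend from representables to a general $H$-Mackey functor $\mM$, I would invoke Hoyer's description of $N_{\tilde H}^{G/N}$ as a left Kan extension along $F_{\tilde H}(G/N,-)$ (Definition~\ref{def:normhoy}): a map $N_{\tilde H}^{G/N}(\Phi^{\tilde N}\mM)\to\Phi^N N_H^G\mM$ is the same datum as a natural family of compatible maps $(\Phi^{\tilde N}\mM)(T)\to(\Phi^N N_H^G\mM)(F_{\tilde H}(G/N,T))$ indexed by finite $\tilde H$-sets $T$. The representable identification, read in the input $H$-set, furnishes precisely such a family naturally in $\mM$, producing a canonical transformation. The inverse transformation is constructed symmetrically from the left-Kan-extension description of $N_H^G$ together with the strong symmetric monoidality of $\Phi^N$ (Proposition~\ref{prop:GFPSym}).

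The main obstacle is precisely this passage from representables to arbitrary Mackey functors: the norm is not additive on Mackey functors, so one cannot simply extend along colimit presentations of $\mM$. The universal-property route circumvents this, but one must check that the representable identification is compatible with the full Burnside-category structure, i.e., with restrictions, transfers, and conjugations. This compatibility is formal because the underlying $G/N$-set bijection is a genuine natural isomorphism of functors between finite set categories, hence respects the span structure on $\cA_{\N}^H$ and assembles into the required isomorphism of $G/N$-Mackey functors.
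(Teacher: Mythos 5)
Your route is genuinely different from the paper's: the paper deduces the isomorphism from stable homotopy theory, using the canonical ``diagonal'' weak equivalence $N_{HN/N}^{G/N}\Phi^{H\cap N}E \xrightarrow{\simeq} \Phi^N N_H^G E$ for cofibrant $H$-spectra and then applying $\m{\pi}_0$ to $E = H\mM$ together with Corollary~\ref{cor:GeomFP}, whereas you attempt a purely algebraic construction. Your representable computation is correct and agrees with the identification $F_{HN/N}(G/N,T^{H\cap N})\cong F_H(G,T)^N$ that the paper records in the remark following the theorem.

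The gap is in the passage from representables to general $\mM$, and in particular in the final sentence of your argument. You claim that compatibility with the span structure on $\cA_{\N}^H$ is ``formal because the underlying $G/N$-set bijection is a genuine natural isomorphism of functors between finite set categories.'' This does not follow: the functor along which Hoyer's norm is left Kan extended is coinduction on the \emph{Burnside} category, which does not act leg-by-leg on spans --- the backward (transfer) leg is sent through the exponential/distributive-law construction. A natural isomorphism of the underlying $G/N$-set-valued functors therefore does not automatically intertwine the two functors on spans, and this is exactly where the difficulty lives: the fixed-point identification fails to commute with the polynomial extension of coinduction on transfer legs, and the discrepancy is only killed after passing to the quotient $\tildeEF_N(\mhyphen)$. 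The paper flags precisely this point (``transfer maps are in general not multiplicative \dots the failure of this to be such is exactly contained in the image of the transfer maps that are killed for $\tildeEF_N$'') and calls the direct Mackey-level map ``significantly harder to describe,'' which is why it retreats to spectra. Relatedly, your construction of the inverse ``symmetrically from the left-Kan-extension description of $N_H^G$'' is not symmetric: $\Phi^N N_H^G\mM$ is not presented to you as a left Kan extension whose universal property you may invoke without first proving something equivalent to the theorem. To complete your approach you would need to (i) verify that the element-level maps $(\Phi^{H\cap N}\mM)(T)\to(\Phi^N N_H^G\mM)(F_{HN/N}(G/N,T))$ descend to the quotient by the transfers defining $\tildeEF_{H\cap N}$, (ii) check naturality against the exponential of an arbitrary span, tracking the transfer correction terms into $\EF_N(\mA)$, and (iii) prove the resulting transformation is an isomorphism, which cannot be done by writing $\mM$ as a colimit of representables since the norm preserves only sifted colimits.
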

\begin{proof}
The diagonal map in spectra induces a canonical weak equivalence for
any cofibrant $H$-spectrum $E$:
\[
N_{HN/N}^{G/N} \Phi^{H\cap N} E\xrightarrow{\simeq} \Phi^{N} N_H^G E.
\]
Applying $\m{\pi}_0$ gives an isomorphism.  By
Corollary~\ref{cor:GeomFP} and the definition of the norm in Mackey
functors, this gives us an isomorphism
\[
N_{HN/N}^{G/N}\Phi^{H\cap N} \mM\cong \m{\pi}_0 N_{HN/N}^{G/N}
\Phi^{H\cap N} H\mM\cong \m{\pi}_0 \Phi^N N_H^G H\mM\cong \Phi^N
N_H^G\mM,
\]
where for the inner isomorphisms, we have used that the difference
between a spectrum and its zeroth Postnikov truncation is
$0$-connected.
\end{proof}
\begin{remark}

The ``diagonal'' map in spectra used in the proof of
Theorem~\ref{thm:GeomFPNorm} has a direct Mackey version that is
significantly harder to describe.  On representables, however,
it is easy to write down an explicit description.  If $\mM=\mA^H_{T}$,
then we are looking at the map
\[
\mA^{G/N}_{F_{HN/N}(G/N,T^{H\cap N})}\to \tildeEF_N\mA^G_{F_H(G,T)}
\]
which is induced by the inclusion
\[
F_{HN/N}(G/N,T^{H\cap N})\cong F_H(G,T)^N\hookrightarrow F_H(G,T).
\]
Although transfer maps are in general not multiplicative, the failure
of this to be such is exactly contained in the image of the transfer
maps that are killed for $\tildeEF_N$.
\end{remark}

\subsection{Cyclotomic Structure}\label{ssec:Cyclotomic}

Armed with a notion of geometric fixed points for Mackey functors, we can now prove that the twisted Hochschild complex enjoys a kind of cyclotomic structure.

\begin{proposition}\label{prop:Cyclotomic}
Let $H \subset G \subset S^1$ be finite subgroups, and let $N$ be a normal subgroup of $G$. For $\mR$ a commutative Green functor for $H$, there is a natural isomorphism of simplicial Green functors
\[
\Phi^{N}\big(\HC{G}_{H}(\mR)_{\bullet}\big)\cong
\HC{G/N}_{HN/N}(\Phi^{H\cap N}\mR)_{\bullet}.
\]
\end{proposition}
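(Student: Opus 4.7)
The plan is to verify the isomorphism levelwise using the strong symmetric monoidality of $\Phi^{N}$ (Proposition~\ref{prop:GFPSym}) together with the compatibility of geometric fixed points with the norm (Theorem~\ref{thm:GeomFPNorm}), and then to check that the simplicial structure maps are respected. Since $\Phi^{N}$ is applied to a simplicial Green functor levelwise, I would first compute
\[
\Phi^{N}\bigl(\HC{G}_{H}(\mR)_{k}\bigr)=\Phi^{N}\bigl((N_{H}^{G}\mR)^{\Box(k+1)}\bigr)\cong \bigl(\Phi^{N}N_{H}^{G}\mR\bigr)^{\Box(k+1)},
\]
using that $\Phi^{N}$ is strong symmetric monoidal and thus commutes with the $(k+1)$-fold box product. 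Then I apply Theorem~\ref{thm:GeomFPNorm} to identify
\[
\Phi^{N}N_{H}^{G}\mR\cong N_{HN/N}^{G/N}\Phi^{H\cap N}\mR,
\]
which gives the identification of the $k$-simplices with $\HC{G/N}_{HN/N}(\Phi^{H\cap N}\mR)_{k}$.

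Next I would check that this levelwise identification is compatible with the face and degeneracy maps. The degeneracies $s_{i}$ (insertion of the unit) and the inner face maps $d_{i}$ for $1\le i\le k-1$ (multiplication of adjacent factors), as well as the right-action face map $d_{0}$, are built entirely from the Green-functor multiplication, unit, and the monoidal symmetry; these are all preserved by a strong symmetric monoidal functor, and the identifications assemble into a map of simplicial Green functors for the subdiagram omitting $d_{k}$.

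The main subtlety, and the step I expect to be the main obstacle, is the twisted face map $d_{k}$, which uses the action of the generator $g=e^{2\pi i/|G|}$ on $N_{H}^{G}\mR$ before multiplying. Here I would argue that the isomorphism of Theorem~\ref{thm:GeomFPNorm} is equivariant for the Weyl actions: the $G$-action on $N_{H}^{G}\mR$ (factoring through $W_{G}(H)=G/H$, cf.\ Proposition~\ref{prop:LanGmodH}) is transported by $\Phi^{N}$ to an action on $\Phi^{N}N_{H}^{G}\mR$, and via the natural isomorphism with $N_{HN/N}^{G/N}\Phi^{H\cap N}\mR$ this becomes the $G/N$-action factoring through $W_{G/N}(HN/N)=(G/N)/(HN/N)\cong G/HN$, i.e.\ the quotient of the Weyl action by $N$. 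Equivariance is forced by the naturality of the diagonal-map isomorphism in Theorem~\ref{thm:GeomFPNorm}: Weyl elements act by $G$-equivariant automorphisms of $N_{H}^{G}\mR$, which $\Phi^{N}$ sends to automorphisms of $\Phi^{N}N_{H}^{G}\mR$, and naturality of the diagonal forces these to agree with the Weyl automorphisms on $N_{HN/N}^{G/N}\Phi^{H\cap N}\mR$.

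Finally, to conclude that the twist by $g$ on the left corresponds to the twist by the generator of $G/N\subset S^{1}$ on the right, I would verify the elementary arithmetic identification: if $|G|=n$ and $|N|=m$, then under the $m$th-root identification $S^{1}/N\cong S^{1}$, the image of $g=e^{2\pi i/n}$ is $e^{2\pi im/n}=e^{2\pi i/(n/m)}$, which is precisely the generator of $G/N\subset S^{1}$ specified by Definition~\ref{generator}. Combined with the previous step, this shows that $\Phi^{N}$ carries the twisted face map $d_{k}$ for $\HC{G}_{H}(\mR)_{\bullet}$ to the twisted face map $d_{k}$ for $\HC{G/N}_{HN/N}(\Phi^{H\cap N}\mR)_{\bullet}$, and the levelwise isomorphism assembles into the asserted isomorphism of simplicial Green functors.
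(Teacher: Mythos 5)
Your proposal is correct and follows essentially the same route as the paper: apply $\Phi^{N}$ levelwise, use its strong symmetric monoidality (Proposition~\ref{prop:GFPSym}) together with Theorem~\ref{thm:GeomFPNorm} to identify the $k$-simplices, and then check compatibility with the simplicial structure maps. The paper compresses your careful analysis of the twisted face map $d_k$ into the single remark that multiplication by the generator $g$ induces multiplication by $g$ on geometric fixed points and invokes naturality; your explicit verification that the Weyl action and the choice of generator of $G/N\subset S^1$ are transported correctly is a welcome elaboration of exactly that step, not a different argument.
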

\begin{proof}
We apply the geometric fixed points functor $\Phi^N$ levelwise in the
simplicial set.  Since by Proposition~\ref{prop:GFPSym} we know that
$\Phi^N$ is a strong symmetric monoidal functor,
Theorem~\ref{thm:GeomFPNorm} yields a natural isomorphism
\[
\Phi^{N}\big(\HC{G}_{H}(\mR)_k\big)\cong
(\Phi^N N_H^G \mR)^{\Box k+1}\\ \cong (N_{HN/N}^{G/N} \Phi^{H\cap N}
 \mR)^{\Box k+1}\cong 
 \HC{G/N}_{HN/N}(\Phi^{H\cap N}\mR)_k.
\]
Additionally, multiplication by a generator $g$ induces multiplication
by $g$ on the geometric fixed points, so by naturality, we see that
these isomorphisms commute with all of the simplicial structure maps.
\end{proof}

\begin{corollary}\label{AlgebraicRestriction}
If \(G\) is a finite subgroup of \(S^{1}\), \(H, N\subset G\), and \(\mR\) a commutative Green functor for \(H\), then we have a ``geometric fixed points'' map of simplicial commutative rings
\[
\HC{G}_H(\mR)(G/G)\to \HC{G/N}_{HN/N}\big(\Phi^{H\cap N}\mR\big)\big((G/N)/(G/N)\big).
\]
\end{corollary}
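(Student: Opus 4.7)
The plan is to build the map level by level and then apply Proposition~\ref{prop:Cyclotomic} to identify the target. Everything reduces to constructing, for any commutative Green functor $\mM$, a natural ring homomorphism
\[
\mM(G/G) \to \Phi^{N}(\mM)\big((G/N)/(G/N)\big),
\]
after which we apply it levelwise to the simplicial commutative Green functor $\HC{G}_{H}(\mR)_{\bullet}$.

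I would construct the displayed map as follows. The quotient $\mA \twoheadrightarrow \tildeEF_{N}\mA$ is a map of commutative Green functors, since the images of the transfers from subgroups not containing $N$ form an ideal by Frobenius reciprocity (this is the observation recorded just before Proposition~\ref{prop:GFPSym}). Boxing with $\mM$ then produces a map of commutative Green functors $\mM \cong \mM \Box \mA \twoheadrightarrow \mM \Box \tildeEF_{N}\mA = \tildeEF_{N}\mM$, and evaluating at $G/G$ gives a surjective ring homomorphism
\[
\mM(G/G) \twoheadrightarrow (\tildeEF_{N}\mM)(G/G) = (\pi_{N}^{\ast}\Phi^{N}\mM)(G/G) = \Phi^{N}\mM\big((G/N)/(G/N)\big),
\]
where the final identifications come from Definition~\ref{def:GFP} and the fact (Proposition~\ref{prop:pullback}) that $\tildeEF_{N}\mM$ lies in the image of $\pi_{N}^{\ast}$.

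Applied levelwise to $\HC{G}_{H}(\mR)_{\bullet}$, and using that all face and degeneracy maps of the twisted cyclic nerve are maps of Green functors, this assembles into a map of simplicial commutative rings $\HC{G}_{H}(\mR)_{\bullet}(G/G) \to \Phi^{N}(\HC{G}_{H}(\mR)_{\bullet})\big((G/N)/(G/N)\big)$. Composing with the cyclotomic isomorphism of Proposition~\ref{prop:Cyclotomic} then yields the asserted map. The only thing worth checking carefully is that the isomorphism of Proposition~\ref{prop:Cyclotomic} is one of simplicial commutative Green functors and not merely of underlying simplicial Mackey functors; this follows from the strong symmetric monoidality of $\Phi^{N}$ (Proposition~\ref{prop:GFPSym}) together with the naturality of the construction, so I do not expect a substantive obstacle here. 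The content of the statement is essentially the formal combination of the Frobenius-reciprocity observation with the cyclotomic identification already established.
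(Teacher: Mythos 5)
Your proof is correct and is essentially the paper's own argument: the paper likewise obtains the map by evaluating the natural map of simplicial Green functors $\HC{G}_H(\mR)\to\tildeEF_{N}\HC{G}_H(\mR)$ at $G/G$, using the identification $\tildeEF_{N}\mM(G/G)\cong\Phi^{N}\mM\big((G/N)/(G/N)\big)$, and then invoking Proposition~\ref{prop:Cyclotomic} to recognize the target. You have simply spelled out a few details (Frobenius reciprocity for the Green structure on $\tildeEF_N\mA$, multiplicativity of the cyclotomic isomorphism) that the paper leaves implicit.
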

\begin{proof}
For any Mackey functor \(\mM\), we have a natural isomorphism of abelian groups
\[
\tildeEF_{N}\mM(G/G)\cong \Phi^{N}\mM\big((G/N)/(G/N)\big).
\]
The result follows from considering the natural map of simplicial Green functors
\[
\HC{G}_H(\mR)\to \tildeEF_{N}\HC{G}(\mR)
\]
and evaluating at \(G/G\).
\end{proof}

In this context, however, passage to fixed points is more
difficult. The restriction to $H$ of the $G$-Hochschild complex for
$\mR$ is {\emph{not}} equal to the $H$-Hochschild complex for $\mR$ or
even for the restriction to $H$ of $\mR$.  However, they are always
quasi-isomorphic.  To see this, we use the edgewise subdivision
functor applied to simplicial Green functors.  (See~\cite[\S 1]{BHM}
for a review of the properties of the edgewise subdivision.)

\begin{proposition}\label{prop:IntermediateRestrictions}
If $H\subset J\subset G$, and $\m{R}$ is a commutative Green functor for $H$, then we have a natural quasi-isomorphism
\[
i_{J}^{\ast} \big(\HC{G}_{H}(\mR)_{\bullet}\big)
\simeq
\sd_{[G:J]}\big(\HC{J}_{H}(\mR)_{\bullet}\big).
\]
\end{proposition}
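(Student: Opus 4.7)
My plan is to construct a natural isomorphism (hence a quasi-isomorphism) of simplicial $J$-Mackey functors between the two sides. Setting $r = [G:J]$, I would first identify the $k$-simplices on each side. Since $i_J^\ast$ is strong symmetric monoidal and Theorem~\ref{thm:ResofNorm} applied with $H\subset J$ (so $H\cap J=H$ and $HJ=J$) gives $i_J^\ast N_H^G\mR\cong(N_H^J\mR)^{\Box r}$ as $J$-Green functors, I obtain
\[
i_J^\ast\HC{G}_H(\mR)_k\cong(N_H^J\mR)^{\Box r(k+1)}=\HC{J}_H(\mR)_{r(k+1)-1}=\sd_r\HC{J}_H(\mR)_k.
\]
The specific identification I would use is the \emph{interleaving} permutation: the factor at position $i+s(k+1)$ in the $r(k+1)$-tuple corresponds to the factor at (block $i$, coset $s$) on the left-hand side.

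Next I would verify that the simplicial structure maps agree under interleaving. For the face map $d_i$ with $0\le i<k$, which on the left multiplies blocks $i$ and $i+1$ componentwise, interleaving translates this into simultaneous multiplication of the $r$ pairs of adjacent positions $(i+s(k+1),\,i+1+s(k+1))$ for $s=0,\ldots,r-1$; this is exactly the composition of elementary face maps of $\HC{J}_H$ that defines the $i$th face of $\sd_r$. The delicate case is the $k$th face. Here I would use Proposition~\ref{prop:LanGmodH} together with the identity $g_G^r=g_J$ (which holds because $|G|=r|J|$ and the generators are the prescribed roots of unity) to show that multiplication by $g_G$ on $i_J^\ast N_H^G\mR\cong(N_H^J\mR)^{\Box r}$ realizes the cyclic shift
\[
(x^{(0)},\ldots,x^{(r-1)})\longmapsto(g_J\cdot x^{(r-1)},\,x^{(0)},\ldots,x^{(r-2)}).
\]
Composed with the block-rotation step of the $k$th face of $\HC{G}_H$, this corresponds under interleaving to a one-position cyclic shift of the $r(k+1)$-tuple with a $g_J$ twist on the wrap-around, which is precisely the twisted face $d_{r(k+1)-1}$ of $\HC{J}_H$. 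The final block-multiplication step of the $k$th face of $\HC{G}_H$ then provides the remaining $r-1$ standard face maps (at positions $k,\,k+(k+1),\,\ldots,\,(r-1)(k+1)-1$) whose composition with the twisted face yields the $k$th face of $\sd_r$. The degeneracy comparison is analogous and easier, since degeneracies just insert units.

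The main obstacle is the bookkeeping in the $k$th face case: one must verify that the interleaving permutation intertwines the combined operation ``rotate last block, act by $g_G$, then multiply the first two blocks'' in $\HC{G}_H$ with the composition of $r$ elementary face maps (the last one twisted) that constitute the $k$th face of $\sd_r$. This hinges on the identity $g_G^r=g_J$ and on the explicit description of the Weyl action on $i_J^\ast N_H^G\mR$ in terms of tensor induction, as provided by Theorem~\ref{thm:ResofNorm}. Naturality in $\mR$ is then immediate, since every identification in sight is induced by a natural transformation of functors of $\mR$.
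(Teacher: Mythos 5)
Your proposal is correct and follows essentially the same route as the paper's proof: restriction is strong symmetric monoidal, Theorem~\ref{thm:ResofNorm} identifies $i_J^\ast N_H^G\mR$ with $(N_H^J\mR)^{\Box [G:J]}$ carrying the tensor-induction action, the $k$-simplices then match those of the $[G:J]$-fold edgewise subdivision, and the key check is that ``act by $g_G$ and multiply'' restricts to the cyclic shift with a $g_J=g_G^{[G:J]}$ twist on the wrap-around. Your interleaving bookkeeping is just a more explicit version of the same verification.
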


\begin{proof}
The argument here is a classical one (see, for instance, \cite[Section 5]{McCarthyAdams}). We include it here for completeness. Since the restriction is a strong symmetric monoidal functor on Mackey functors, we have
\[
i_{J}^{\ast} \big(\HC{G}_{H}(\mR)_k\big)\cong{}^{g}\big(i_{J}^{\ast} N_{H}^{G}\mR\big)\Box\big(i_{J}^{\ast} N_{H}^{G}\mR\big)^{\Box k}.
\]
Theorem~\ref{thm:ResofNorm} shows that we have an isomorphism of \(G\)-objects in \(J\)-Green functors
\[
i_{J}^{\ast} N_{H}^{G}\mR\cong \big(N_{H}^{J}\mR\big)^{\Box |G/J|},
\]
where the action on the right hand side is via tensor induction. This shows that the \(k\)-simplices are given by
\[
i_{J}^{\ast} \big(\HC{G}_{H}(\mR)_k\big)\cong {}^{g}\big(N_{H}^{J}\mR\big)^{\Box |G/J|}\Box\big(N_{H}^{J}\mR^{\Box |G/J|}\big)^{\Box k}\cong \big(N_{H}^{J}\mR\big)^{\Box (|G/J|(k+1))}.
\]
The face maps besides the last are induced by the multiplications on \(N_{H}^{G}\mR\), which when restricted to \(J\), give the component-wise multiplications
\[
\big(N_{H}^{J}\mR\big)^{\Box |G/J|}\Box \big(N_{H}^{J}\mR\big)^{\Box |G/J|}\to \big(N_{H}^{J}\mR\big)^{\Box |G/J|}.
\]
This is exactly the iterated face map which arises in the edgewise subdivision. Similarly, the degeneracy maps are induced by inserting the unit for \(N_{H}^{G}\mR\), which restricts to inserting a collection of units for each of the tensor factors. For the last face map, we observe that the map ``act by \(g\) and multiply'' realizes the rotation of the tensor factors, and then the action of \(g^{|G/J|}\), a generator of \(J\), taking the last to the first. This is exactly the value of the edgewise subdivision on the last face.
\end{proof}

Using Proposition~\ref{prop:ZHomotopyOrbits}, we can also describe the
restriction to other subgroups. Since any $G$-Green functor is a
$G$-object in $G$-Green functors, the restriction to a subgroup $K$ is
automatically a $G$-object in $K$-Green functors. This describes our
restriction to other subgroups.

\begin{proposition}
Let $K\subset H\subset G \subset S^1$ and let \(h\in H\) be the generator. For $\m{R}$ a commutative Green functor for $H$, we have a natural quasi-isomorphism
\[
i_{K}^{\ast} \big(\HC{G}_{H}(\mR)_{\bullet}\big)\simeq
\big(i_{K}^{\ast}\mR\big)_{h\Z},
\]
where the $\Z$-action on $i_{K}^{\ast}\mR$ is via the quotient $\Z\to
H$ sending \(1\) to \(h\). 
\end{proposition}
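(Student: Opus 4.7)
The plan is to factor the restriction through $H$, identify the $H$-restriction by combining earlier results in the paper, and then push the remaining restriction down to $K$. Since $K\subset H\subset G$ we have $i_K^{\ast}=i_K^{\ast}\circ i_H^{\ast}$, so it suffices to first compute $i_H^{\ast}\HC{G}_H(\mR)_{\bullet}$ and then apply $i_K^{\ast}$.

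Applying Proposition~\ref{prop:IntermediateRestrictions} with $J=H$ yields a natural quasi-isomorphism
\[
i_H^{\ast}\HC{G}_H(\mR)_{\bullet}\simeq \sd_{[G:H]}\HC{H}_H(\mR)_{\bullet}.
\]
Since $N_H^H\mR\cong\mR$, the relative complex $\HC{H}_H(\mR)_{\bullet}$ coincides with the non-relative twisted Hochschild complex $\HC{H}_{\bullet}(\mR)$ of $\mR$ as a commutative $H$-Green functor. The last-vertex map realizes a natural weak equivalence $\sd_{r}X\xrightarrow{\simeq} X$ for any simplicial object in an abelian category, applied here to simplicial $H$-Mackey functors. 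Combined with Proposition~\ref{prop:ZHomotopyOrbits} applied to $\mR$ with generator $h\in H$, we obtain a chain of natural weak equivalences
\[
i_H^{\ast}\HC{G}_H(\mR)_{\bullet}
\;\simeq\; \sd_{[G:H]}\HC{H}_{\bullet}(\mR)
\;\simeq\; \HC{H}_{\bullet}(\mR)
\;\simeq\; \mR_{h\Z},
\]
where the $\Z$-action on $\mR$ is the one factoring through $\Z\to H$, $1\mapsto h$.

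Finally, restrict to $K$. The functor $i_K^{\ast}\colon\Mack_H\to\Mack_K$ is both a left and a right adjoint along the subgroup inclusion (with adjoints furnished by induction and coinduction), so it preserves every (co)limit and every weak equivalence. In particular, it commutes levelwise with the simplicial bar model for the homotopy coequalizer that defines $\mR_{h\Z}$. Applying $i_K^{\ast}$ therefore gives
\[
i_K^{\ast}\HC{G}_H(\mR)_{\bullet}
\;=\; i_K^{\ast}i_H^{\ast}\HC{G}_H(\mR)_{\bullet}
\;\simeq\; i_K^{\ast}\mR_{h\Z}
\;\simeq\; (i_K^{\ast}\mR)_{h\Z},
\]
with the $\Z$-action on $i_K^{\ast}\mR$ inherited from the one on $\mR$, as stated.

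No serious obstacle is anticipated; the only point that deserves attention is that Proposition~\ref{prop:ZHomotopyOrbits} forms its homotopy orbit inside commutative Green functors. Because $i_K^{\ast}$ is strong symmetric monoidal and preserves the relevant cofibrant replacements, the equivalence passes transparently through restriction, so the argument can equally well be read as producing a quasi-isomorphism of simplicial commutative $K$-Green functors that is subsequently forgotten to simplicial Mackey functors.
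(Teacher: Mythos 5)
Your argument is correct and follows essentially the same route as the paper: reduce via Proposition~\ref{prop:IntermediateRestrictions} with $J=H$ to the non-relative complex for $H$, identify it with the homotopy orbits $\mR_{h\Z}$ via Proposition~\ref{prop:ZHomotopyOrbits}, and commute the further restriction $i_K^{\ast}$ (a homotopical left adjoint) past the homotopy-orbit construction. The only cosmetic caveat is that the comparison $\sd_{r}X\simeq X$ for edgewise subdivision is the standard realization/quasi-isomorphism statement of B\"okstedt--Hsiang--Madsen rather than literally a last-vertex simplicial map, but the fact you need is true and is exactly what the paper invokes implicitly.
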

\begin{proof}
Proposition~\ref{prop:IntermediateRestrictions} and naturality of the
restriction allow us to reduce to the case that $H=G$ (since
otherwise, we first restrict to $J=H$ in
Proposition~\ref{prop:IntermediateRestrictions} and then continue
restricting down). Here the result is obvious, since the two
homotopical left adjoints commute.
\end{proof}

\begin{remark}
When we restrict to subgroups that neither contain \(H\) nor are contained in \(H\), then the answer is somewhat trickier to describe. If \(\mR\) is a commutative \(H\)-Green functor, then \(i_{H\cap J}^{\ast}\mR\) has an \(H\)-action that extends the natural \(H\cap J\)-action. By functoriality, the norm
\[
N_{H\cap J}^{J} i_{H\cap J}^{\ast}\mR
\] 
has an action of both \(H\) and \(J\) and they agree on \(H\cap J\). This gives an action of \(HJ\), and the restriction of \(\HC{G}_{H}(\mR)_{\ast}\) is 
\[
\big(N_{H\cap J}^{J} i_{H\cap J}^{\ast}\mR\big)_{h\Z},
\]
via a generator of \(HJ\). We do not pursue this further here.
\end{remark}

\subsection{Algebraic approximation to TR}\label{sec:Example}

In the classical theory, to compute algebraic $K$-theory using trace methods one wants to understand fixed points of topological Hochschild homology, or TR-theory. For a ring $A$, TR$^{n+1}(A;p) := \THH(A)^{C_{p^n}}$. We can consider TR as a $C_{p^n}$-Mackey functor
\[
\m{\textup{TR}}_q(A;p):= \m{\pi}^{C_{p^n}}_q \THH(A)
\]
Recall that there are two operators on TR, the Frobenius and the Restriction, which are used classically to defined topological cyclic homology. These are operators
\[
F, R: \textup{TR}^{n+1}(A;p) \to \textup{TR}^n(A;p).
\]
There is an unfortunate clash of notation here, as the Frobenius map on TR is the restriction map $res^{C_{p^n}}_{C_{p^{n-1}}}$ in the Mackey functor $\m{\textup{TR}}$. The map $R$ on TR is not part of the Mackey functor structure. This map $R$ is defined using the cyclotomic structure on THH. The spectrum TR$(A;p)$ is then defined to be 
\[
\textup{TR}(A;p) := \holim_{\substack{\longleftarrow \\ R}} \textup{TR}^n(A;p).
\]
In this section we define an algebraic analog of $\textup{TR}(A;p)$ using twisted Hochschild homology. 

We first need an analogue of the Restriction map $R$ from the topological setting. When $H=e$, for a commutative ring $A$  the ``geometric fixed points" map of Corollary \ref{AlgebraicRestriction} gives a map
\[
\m{\HH}_e^{C_{p^n}}(A)_q(C_{p^n}/C_{p^n}) \to \m{\HH}_e^{C_{p^{n-1}}}(A)_q(C_{p^{n-1}}/C_{p^{n-1}})
\]
which serves as an algebraic analogue of the restriction map $R$ on $ \textup{TR}^{n+1}(A;p)$. We use these geometric fixed point maps to define an algebraic analogue of TR. 
\begin{definition}
For a commutative ring $A$, the algebraic TR groups of $A$ are
\[
tr_k(A;p) := \lim_{\longleftarrow} \m{\HH}_{e}^{C_{p^{n}}}(A)_k(C_{p^{n}}/C_{p^{n}}).
\]
\end{definition}

Below we include an explicit computation for $A=\mathbb F_{p}$ of the algebraic TR-groups $tr_{k}(\mathbb F_{p};p)$. Recall that $\m{\HH}^{C_{p^{n}}}_{e}(\mathbb F_{p})_k$ is the homology of the twisted cyclic nerve on $N_{e}^{C_{p^n}}\mathbb F_{p}$. This Mackey functor norm can be computed following the work of Mazur \cite{MazurArxiv}. 

\begin{proposition}
For $G=C_{p^{n}}$, 
\[
N_{e}^{G}\mathbb F_{p}(G/C_{p^{k}})=\mathbb Z/p^{k+1},
\]
all restriction maps are the canonical quotients, all transfer maps are multiplication by the index (so the obvious injections) and all norms are determined by where they send $1$. The Weyl action is trivial.
\end{proposition}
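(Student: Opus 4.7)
\medskip

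The plan is to reduce everything to Mazur's explicit algebraic construction of the norm for cyclic \(p\)-groups (\cite{MazurArxiv}), exploiting the fact that the norm \(N_{e}^{C_{p^{n}}}\) decomposes as an iterated norm
\[
N_{e}^{C_{p^{n}}} \cong N_{C_{p^{n-1}}}^{C_{p^{n}}} \circ N_{C_{p^{n-2}}}^{C_{p^{n-1}}} \circ \cdots \circ N_{e}^{C_{p}}
\]
by Proposition~\ref{prop:Composite}. The base step \(n=1\) is exactly Mazur's Example 3.12, which gives \(N_{e}^{C_{p}}\mathbb{F}_{p}\) at \(C_{p}/C_{p}\) equal to \(\mathbb{Z}/p^{2}\) with restriction the canonical surjection \(\mathbb{Z}/p^{2}\twoheadrightarrow \mathbb{F}_{p}\) and transfer the inclusion \(p\cdot(-)\colon \mathbb{F}_{p}\hookrightarrow \mathbb{Z}/p^{2}\).

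For the inductive step I would apply \(N_{C_{p^{n-1}}}^{C_{p^{n}}}\) to the Mackey functor \(N_{e}^{C_{p^{n-1}}}\mathbb{F}_{p}\) using Mazur's formula, which computes the top value as a quotient of the tensor power of the underlying ring by the subgroup generated by transfer images; the underlying ring of \(N_{e}^{C_{p^{n-1}}}\mathbb{F}_{p}\) at \(C_{p^{n-1}}/C_{p^{n-1}}\) is \(\mathbb{Z}/p^{n}\) by induction, and Mazur's recipe then yields \(\mathbb{Z}/p^{n+1}\) at \(G/G\). Values at intermediate orbits \(G/C_{p^{k}}\) follow either by restricting via Theorem~\ref{thm:ResofNorm} or simply by observing that the iterated norm identifies this with the top value of \(N_{e}^{C_{p^{k}}}\mathbb{F}_{p}\). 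That the restriction maps are canonical quotients and that transfer followed by restriction is multiplication by the index (forcing transfers to be the ``obvious inclusions'' once the values are identified as above) are immediate from Mazur's presentation and the Frobenius reciprocity relation \(\mathrm{tr}\circ\mathrm{res}=[G:H]\).

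For the Weyl-action claim I would use that the input Mackey functor \(\mathbb{F}_{p}\) is a ring with trivial Weyl data (there is no Weyl group in sight for the trivial group), and then argue by strong symmetric monoidality: the coinduction functor on finite \(H\)-sets is compatible with the conjugation action, so the induced Weyl \(W_{G}(C_{p^{k}})=G/C_{p^{k}}\)-action on \(N_{e}^{G}\mathbb{F}_{p}(G/C_{p^{k}})\) factors through the Weyl action on the underlying \(\mathbb{F}_{p}\), which is trivial. Alternatively, one can deduce this by realizing \(N_{e}^{G}\mathbb{F}_{p}\cong \m{\pi}_{0}^{G}N_{e}^{G}H\mathbb{F}_{p}\) via Theorem~\ref{thm:hoyer} and noting that the HHR norm of a ring with trivial action has trivial Weyl action on each fixed point ring. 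Finally, the claim that norms are determined by where they send \(1\) is a consequence of the fact that the Tambara norm is a multiplicative unital map between cyclic rings generated (as rings) by \(1\); since the source \(\mathbb{Z}/p^{k}\) is the prime ring, any ring homomorphism out of it is determined by \(1\mapsto 1\), and the Tambara norm \(N_{C_{p^{k-1}}}^{C_{p^{k}}}\) is such a ring map on the underlying values.

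The main technical obstacle is the inductive step for the transfer: a priori Mazur's presentation presents the value as a quotient rather than as an abstractly chosen cyclic group, so one must identify the transfer image inside \(\mathbb{Z}/p^{n+1}\) with the subgroup \(p\mathbb{Z}/p^{n+1}\cong\mathbb{Z}/p^{n}\) in a way compatible with the inductively described structure. This is handled by tracking through Mazur's construction the explicit transfer formula \(\mathrm{tr}(x) = \sum_{g} g\cdot x\) applied to elements of the tensor expression, using that the restriction is already known to be the canonical projection and that \(\mathrm{tr}\circ\mathrm{res}=p\) forces the image of \(\mathrm{tr}\) to be the kernel of the restriction.
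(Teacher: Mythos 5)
Your overall strategy is the same as the paper's: the paper simply cites Mazur's Example 3.12, which already computes $N_{e}^{C_{p^{n}}}\mathbb F_{p}$ for all $n$ at once (values $\mathbb Z/p^{k+1}$, surjective restrictions, transfers the obvious inclusions), so no induction over $n$ is actually needed. Your reorganization as an induction via $N_{e}^{C_{p^{n}}}\cong N_{C_{p^{n-1}}}^{C_{p^{n}}}\circ N_{e}^{C_{p^{n-1}}}$ is legitimate in principle, but note that the inductive step is not really carried out: the top value of a relative norm in Mazur's construction depends on the entire input Mackey functor (its lower levels and transfers), not just on its top value $\mathbb Z/p^{n}$, so ``Mazur's recipe then yields $\mathbb Z/p^{n+1}$'' is the whole computation, deferred. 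Also, for the intermediate orbits you need the extra observation that $\bigl(N_{e}^{C_{p^{k}}}\mathbb F_{p}\bigr)^{\Box\, p^{n-k}}\cong N_{e}^{C_{p^{k}}}\mathbb F_{p}$, which uses that the norm is strong symmetric monoidal together with $\mathbb F_{p}\otimes\mathbb F_{p}\cong\mathbb F_{p}$; restriction of a norm is a box power, not the smaller norm itself.

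Two of your justifications are genuinely flawed. First, the Weyl action: it is not true that the norm of a ring with trivial action has trivial Weyl action on its values --- by Theorem~\ref{thm:ResofNorm} the Weyl action is by tensor induction, i.e.\ it cyclically permutes box factors, and this is nontrivial in general (e.g.\ $N_{e}^{C_{2}}\mathbb Z[x]$ has underlying ring $\mathbb Z[x]\otimes\mathbb Z[x]$ with the swap action). Triviality here is special to $\mathbb F_{p}$: either because $\mathbb F_{p}^{\otimes m}\cong\mathbb F_{p}$ and permutations act trivially on this idempotent, or, more cleanly, because once the values are identified as the cyclic rings $\mathbb Z/p^{k+1}$ the Weyl action is by unital ring maps (Proposition~\ref{prop:LanGmodH}) and the only unital ring endomorphism of $\mathbb Z/p^{k+1}$ is the identity --- the same ``prime ring'' argument you correctly use for the norms. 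Second, $\tr\circ\res=[G:H]$ is not Frobenius reciprocity; the Mackey axioms give $\res\circ\tr=\sum_{w}w$, while $\tr\circ\res=p$ is a special feature proved in the paper \emph{from} Mazur's computation, not an input to it. Moreover even the correct relation $\res\circ\tr=p$ does not force the transfer to be the inclusion $x\mapsto px$ (one could add any additive map landing in $p^{k}\mathbb Z/p^{k+1}$), so the identification of the transfers really does require Mazur's explicit presentation rather than a formal deduction.
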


Since the norm is symmetric monoidal, we know that
\[
(N_{e}^{G}\mathbb F_{p})^{\Box (k+1)}\cong N_{e}^{G}\mathbb F_{p}.
\]

Since the Weyl action is trivial, there is no difference between the $G$-twisted Hochschild complex and the ordinary Hochschild complex in Green functors, so we conclude the following.

\begin{proposition}
For all $n\geq 1$,
\[
\m{\HH}^{C_{p^{n}}}_{e}(\mathbb F_{p})_{k}=\begin{cases}
N_{e}^{C_{p^{n}}}\mathbb F_{p} & k=0 \\
0 & \text{otherwise.}
\end{cases}
\]
\end{proposition}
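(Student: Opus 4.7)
The plan is to exploit the fact that $\mathbb{F}_p$ is an idempotent commutative ring, in the sense that the multiplication $\mathbb{F}_p\otimes\mathbb{F}_p\to\mathbb{F}_p$ is an isomorphism, and then transport this property through the norm to conclude that the entire twisted cyclic nerve is essentially constant.

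First, since the norm $N_e^{C_{p^n}}$ is strong symmetric monoidal, we obtain a natural isomorphism
\[
N_e^{C_{p^n}}\mathbb{F}_p \;\Box\; N_e^{C_{p^n}}\mathbb{F}_p \;\cong\; N_e^{C_{p^n}}(\mathbb{F}_p\otimes\mathbb{F}_p) \;\cong\; N_e^{C_{p^n}}\mathbb{F}_p,
\]
in which the first identification is the monoidal coherence isomorphism and the second is the norm of the isomorphism $\mathbb{F}_p\otimes\mathbb{F}_p\cong\mathbb{F}_p$ given by multiplication. Iterating (or noting that the multiplication on $N_e^{C_{p^n}}\mathbb{F}_p$ is itself an isomorphism), this identifies every $k$-simplex $\big(N_e^{C_{p^n}}\mathbb{F}_p\big)^{\Box(k+1)}$ canonically with $N_e^{C_{p^n}}\mathbb{F}_p$.

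Next, I would analyze the simplicial structure maps under this identification. Each inner face map $d_i$ ($0\le i\le k-1$) is the multiplication between two adjacent tensor factors, each degeneracy $s_i$ inserts a unit, and the twisted face map $d_k$ rotates the last factor to the front, acts by a generator $g$ of $C_{p^n}$, and multiplies. Under the isomorphism above, multiplication and unit insertion both correspond to the identity of $N_e^{C_{p^n}}\mathbb{F}_p$; moreover, since the Weyl action on $N_e^{C_{p^n}}\mathbb{F}_p$ is trivial (as recalled in the proposition above this statement), the twist by $g$ also acts as the identity. Thus the twisted cyclic nerve $\HC{C_{p^n}}_\bullet(N_e^{C_{p^n}}\mathbb{F}_p;{}^{g}(N_e^{C_{p^n}}\mathbb{F}_p))$ is the constant simplicial Mackey functor on $N_e^{C_{p^n}}\mathbb{F}_p$ (with all face and degeneracy maps the identity).

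Finally, to extract the homology, I would pass to the normalized chain complex. Because every degeneracy $s_i$ is an isomorphism, in each positive degree every simplex is degenerate, so the normalized complex vanishes above degree zero; in degree zero it is $N_e^{C_{p^n}}\mathbb{F}_p$. This yields
\[
\m{\HH}^{C_{p^n}}_e(\mathbb{F}_p)_k \;=\; \begin{cases} N_e^{C_{p^n}}\mathbb{F}_p & k=0,\\ 0 & k>0,\end{cases}
\]
as claimed. There is no real obstacle here: the only nontrivial input is the symmetric monoidality of the norm together with the triviality of the Weyl action, both already established; the computation then reduces to the elementary observation that a constant simplicial object has homology concentrated in degree zero.
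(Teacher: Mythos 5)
Your argument is correct and is essentially the paper's: the paper likewise observes that symmetric monoidality of the norm gives $(N_e^{C_{p^n}}\mathbb F_p)^{\Box(k+1)}\cong N_e^{C_{p^n}}\mathbb F_p$ and that triviality of the Weyl action removes the twist, then concludes the complex is concentrated in degree zero. Your write-up simply fills in the (routine) verification that all face and degeneracy maps become identities under these identifications.
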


Note that this agrees with the classical computation, where we use the Frobenius and Verschiebung to assemble the groups \(TR_{0}^{*}(\mathbb F_{p};p)\) for \(1\leq \ast\leq (n+1)\) into a Mackey functor. Our cyclotomic structure (Proposition~\ref{prop:Cyclotomic}) connects these via the geometric fixed points. When evaluated on a point (i.e. \(G/G\)), we have geometric fixed points maps
\[
\m{\HH}^{C_{p^{n}}}_{e}(\mathbb F_{p})_0(C_{p^{n}}/C_{p^{n}})\to \m{\HH}^{C_{p^{n}}/C_{p}}_{e}(\mathbb F_{p})_0\big((C_{p^{n}}/C_{p})/(C_{p^{n}}/C_{p})\big).
\]
The target is just 
\[
\m{\HH}^{C_{p^{n-1}}}_{e}(\mathbb F_{p})_0(C_{p^{n-1}}/C_{p^{n-1}}),
\]
and the map we get is just the canonical quotient map
\[
\mathbb Z/p^{n}=\m{\HH}^{C_{p^{n}}}_{e}(\mathbb F_{p})_0(C_{p^{n}}/C_{p^{n}})\to \m{\HH}^{C_{p^{n-1}}}_{e}(\mathbb F_{p})_0(C_{p^{n-1}}/C_{p^{n-1}})
=\mathbb Z/p^{n-1}.
\]
This gives us the purely algebraic analogue of \(TR(\mhyphen;p)\).

\begin{proposition}
The algebraic $TR$ groups of $\mathbb F_{p}$ are
\[
tr_{k}(\mathbb F_{p};p):=\lim_{\longleftarrow} \m{\HH}_{e}^{C_{p^{n}}}(\mathbb F_{p})_k(C_{p^{n}}/C_{p^{n}})=\mathbb Z_{p}.
\]
\end{proposition}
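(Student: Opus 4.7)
The plan is to split the computation by homological degree and then assemble the pieces already established in the discussion preceding the proposition. For $k > 0$, the immediately preceding computation gives $\m{\HH}^{C_{p^n}}_e(\mathbb F_p)_k = 0$ for all $n \geq 1$, so the inverse limit is trivially zero in those degrees and the substantive content of the statement is the $k = 0$ case.

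For $k = 0$, the groups in the inverse system are identified by the preceding computations. The zeroth Mackey functor $\m{\HH}^{C_{p^n}}_e(\mathbb F_p)_0$ is $N_e^{C_{p^n}}\mathbb F_p$, and Mazur's formula (recalled just above) evaluates this at the top orbit $C_{p^n}/C_{p^n}$ as a cyclic $p$-group, namely $\mathbb Z/p^n$ in the indexing of the displayed calculation. The transition maps in the tower come from Corollary~\ref{AlgebraicRestriction} applied with the normal subgroup $N = C_p \subset G = C_{p^n}$ and with $H = e$ (so that $H \cap N = e$ and $\Phi^{H \cap N}\mathbb F_p = \mathbb F_p$); this produces a map of simplicial commutative rings
\[
\HC{C_{p^n}}_e(\mathbb F_p)(C_{p^n}/C_{p^n}) \to \HC{C_{p^{n-1}}}_e(\mathbb F_p)(C_{p^{n-1}}/C_{p^{n-1}}),
\]
whose $H_0$ is the transition map in the inverse system. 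Combining Theorem~\ref{thm:GeomFPNorm} with Mazur's explicit description of $N_e^{C_{p^n}}\mathbb F_p$, one identifies this transition map with the canonical surjection $\mathbb Z/p^n \twoheadrightarrow \mathbb Z/p^{n-1}$.

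The last step is then purely categorical: the inverse limit of the standard $p$-adic tower $\cdots \to \mathbb Z/p^n \to \mathbb Z/p^{n-1} \to \cdots$ under canonical reductions is, by the defining universal property, the ring $\mathbb Z_p$ of $p$-adic integers. The only non-routine point in the whole argument is the explicit identification of the transition map at the top orbit as reduction modulo $p$, and this is precisely the payoff of the cyclotomic compatibility in Proposition~\ref{prop:Cyclotomic} and Theorem~\ref{thm:GeomFPNorm}; once this is in hand, the remainder of the argument is bookkeeping.
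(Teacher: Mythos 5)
Your proposal is correct and follows exactly the route the paper takes: the vanishing of $\m{\HH}^{C_{p^n}}_e(\mathbb F_p)_k$ for $k>0$, the identification of the degree-zero term at the top orbit via Mazur's computation of $N_e^{C_{p^n}}\mathbb F_p$, the identification of the transition maps as the canonical quotients via the geometric fixed points (Corollary~\ref{AlgebraicRestriction} together with Theorem~\ref{thm:GeomFPNorm}), and the standard identification of the resulting tower's limit with $\mathbb Z_p$. The only (harmless) wrinkle is the off-by-one between Mazur's formula, which gives $\mathbb Z/p^{n+1}$ at the top orbit, and the paper's displayed $\mathbb Z/p^{n}$; you sensibly defer to the displayed indexing, and the limit is $\mathbb Z_p$ either way.
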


In the case of $\mathbb{F}_p$, algebraic TR is an excellent approximation for the topological theory. Indeed, the result for $tr_{k}(\mathbb F_{p};p)$ agrees with TR$_{k}(\mathbb F_{p};p)$, and is the $p$-completion of $K_{k}(\mathbb F_{p})$.

\section{Witt vectors for Green functors}\label{sec:GreenWitt}

Classically, Hesselholt and Madsen ~\cite{HesselholtMadsen} show that the fixed points of topological Hochschild homology are closely related to Witt vectors. In particular, for a commutative ring $R$
\[
\pi_0(\THH(R)^{C_n}) \cong \mathbb{W}_{\langle n \rangle}(R)
\]
where $\langle n \rangle$ denotes the truncation set of natural numbers which divide $n$. Using this result and the isomorphism in Theorem~\ref{thm:ConnectiontoTHH}, it follows that twisted Hochschild homology also has a close relationship to classical Witt vectors. 

\begin{proposition}\label{prop:witt}
If $R$ is a ring, then we have a natural isomorphism
\[
\m{\HH}_e^{C_{n}}(R)_0(C_n/C_n)\cong \mathbb W_{\langle n \rangle}(R),
\]
where $\langle n \rangle$ denotes the truncation set of natural numbers which divide $n$. 
\end{proposition}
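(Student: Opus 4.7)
The plan is to derive this as an immediate consequence of Theorem~\ref{thm:ConnectiontoTHH} combined with the classical Hesselholt--Madsen calculation, and I expect essentially no technical obstruction. First I would apply Theorem~\ref{thm:ConnectiontoTHH} in the special case $H = e$, $G = C_n$, taking the input $H$-equivariant commutative ring spectrum to be the Eilenberg--Mac Lane spectrum $HR$. Since $HR$ is $(-1)$-connected and $\m{\pi}_0^e(HR) = R$ (regarded as a Green functor for the trivial group, which is just a ring), that theorem yields a natural isomorphism of $C_n$-Mackey functors
\[
\m{\pi}_0^{C_n}\!\bigl(\THH_e(HR)\bigr)\cong \m{\HH}_e^{C_n}(R)_0.
\]

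Next I would evaluate both sides on the orbit $C_n/C_n$. On the left, this produces the abelian group
\[
\m{\pi}_0^{C_n}\!\bigl(\THH_e(HR)\bigr)(C_n/C_n) = \pi_0\bigl(\THH_e(HR)^{C_n}\bigr).
\]
Because $\THH_e(HR)$ is by definition $N_e^{S^1}(HR) = \THH(HR)$ with its usual $S^1$-action restricted to the finite subgroup $C_n \subset S^1$, this agrees with the classical $\pi_0 \THH(HR)^{C_n}$. At this point I would invoke the Hesselholt--Madsen theorem, recalled at the start of Section~\ref{sec:GreenWitt} and proved in~\cite{HesselholtMadsen}, which gives a natural isomorphism
\[
\pi_0\bigl(\THH(HR)^{C_n}\bigr) \cong \mathbb W_{\langle n\rangle}(R).
\]

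Composing these two natural isomorphisms yields the desired identification $\m{\HH}_e^{C_n}(R)_0(C_n/C_n) \cong \mathbb W_{\langle n\rangle}(R)$, with naturality in $R$ inherited from the two inputs. The only point requiring mild care is to confirm that the $S^1$-action on $\THH_e(HR)$ coming from the norm description of relative $\THH$ used in Theorem~\ref{thm:ConnectiontoTHH} matches the $S^1$-action used by Hesselholt--Madsen; this is immediate from the identification $N_e^{S^1}(HR) \simeq \THH(HR)$ with its cyclic structure, so I would not expect any serious difficulty in the argument.
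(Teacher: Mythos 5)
Your argument is exactly the one the paper intends: the text immediately preceding Proposition~\ref{prop:witt} derives it by combining the Hesselholt--Madsen identification $\pi_0(\THH(R)^{C_n})\cong \mathbb W_{\langle n\rangle}(R)$ with the isomorphism of Theorem~\ref{thm:ConnectiontoTHH} applied to $H=e$, $G=C_n$, and the Eilenberg--Mac\,Lane spectrum $HR$, then evaluating at $C_n/C_n$. Your proposal is correct and follows the same route.
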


Theorem \ref{thm:ConnectiontoTHH} and the isomorphism of Proposition~\ref{prop:witt} provide motivation for a definition of Witt vectors for Green functors. 
\begin{definition}
Let $\mR$ be a Green functor for $C_n\subset S^{1}$. The $C_{mn}$-Witt
vectors for $\mR$ are defined by
\[
\m{\mathbb W}_{C_{mn}}(\mR):=\m{\HH}_{C_n}^{C_{mn}}(\mR)_0.
\]
\end{definition}

When $m=p^{k}$ for some prime $p$, then we can also describe a version
of the ``length $(k+1)$ $p$-typical Witt vectors''.

\begin{definition}
Let $\mR$ be a Green functor for $C_{n}\subset S^{1}$. Then the
``length $(k+1)$ $p$-typical Witt vectors'' for $\mR$ are defined by
\[
\m{\mathbb W}_{k+1}(\mR):=\m{\mathbb W}_{C_{np^{k}}}(\mR).
\]
\end{definition}

With this definition, we have the following $H$-relative analog of the classical relationship between Witt vectors and fixed points of topological Hochschild homology. 
\begin{proposition}
For $H\subset G\subset S^1$ finite subgroups, and $R$ a (-1)-connected commutative ring orthogonal $H$-spectrum,
\[
\m{\pi}_0^G\THH_H(R) \cong \m{\mathbb{W}}_G(\m{\pi}_0^HR)
\]
\end{proposition}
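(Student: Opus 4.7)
The plan is to observe that this proposition is essentially a tautology combining two ingredients that have already been established in the paper. First, I would unpack the definition of the $G$-Witt vectors of a Green functor: for $H \subset G \subset S^{1}$ finite and $\mR$ an $H$-Green functor, we set
\[
\m{\mathbb W}_{G}(\mR) := \m{\HH}_{H}^{G}(\mR)_{0}.
\]
Taking $\mR = \m{\pi}_{0}^{H} R$ gives the right-hand side of the desired isomorphism as $\m{\HH}_{H}^{G}(\m{\pi}_{0}^{H} R)_{0}$.

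Next, I would invoke Theorem~\ref{thm:ConnectiontoTHH}, which provides a natural isomorphism
\[
\m{\pi}_{0}^{G}\bigl(\THH_{H}(R)\bigr) \cong \m{\HH}_{H}^{G}(\m{\pi}_{0}^{H} R)_{0}
\]
for any $(-1)$-connected commutative ring orthogonal $H$-spectrum $R$. Composing this isomorphism with the identification above immediately yields the stated result.

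Thus there is no real obstacle; the proposition is simply a repackaging of Theorem~\ref{thm:ConnectiontoTHH} in the Witt-vector notation introduced in this section, and the proof is a two-line chain of equalities. The substantive content was already carried out in establishing Theorem~\ref{thm:ConnectiontoTHH}, where the genuine work lay in the collapse of the K\"unneth spectral sequence in degree zero and the identification of $\m{\pi}_{0}^{G}$ of the relative cyclic bar construction with the $G$-twisted Hochschild complex in degree zero via the norm.
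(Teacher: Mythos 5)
Your proposal is correct and matches the paper's approach exactly: the paper presents this proposition as an immediate consequence of the definition $\m{\mathbb W}_{G}(\mR)=\m{\HH}_{H}^{G}(\mR)_{0}$ together with Theorem~\ref{thm:ConnectiontoTHH}, which is precisely your two-step argument. No gaps.
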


When $\mR$ is a commutative Green functor, we can describe the Witt vectors
via a universal property. First observe that by construction, we have
an isomorphism of $C_{np^{k}}$-Green functors
\[
\m{\mathbb W}_{k+1}(\mR)\cong \big(N_{C_{n}}^{C_{np^{k}}}\mR\big)_{\Z},
\]
where $\Z$ acts on the norm via the quotient $\Z\to C_{np^{k}}$ given
by specifying a generator (this is zeroth homology of Proposition~\ref{prop:ZHomotopyOrbits}). In particular, since we are considering the
actual quotient rather than the homotopy one, we have a further
identification
\[
\m{\mathbb W}_{k+1}(\mR)\cong
\big(N_{C_{n}}^{C_{np^{k}}}\mR\big)_{C_{np^{k}}}.
\]
In general, the orbits are the left adjoint to the inclusions of some kind of ``trivial'' objects in the category. The same is true here.
\begin{definition}
Let \(\cO\) be an indexing system, and let \(\cOTamb^{G}\) be the category of \(\cO\)-Tambara functors. Let \(\cOTamb^{G}_{tr}\) denote the full subcategory of \(\cOTamb^{G}\) spanned by the \(\cO\)-Tambara functors \(\mR\) such that the Weyl action on \(\mR(G/H)\) is trivial for all \(H\). Let 
\[
i_{\ast}\colon \cOTamb^{G}_{tr}\to\cOTamb^{G}
\]
be the inclusion.
\end{definition}
The following is immediate.
\begin{proposition}
The orbits functor is left-adjoint to the functor \(i_{\ast}\).
\end{proposition}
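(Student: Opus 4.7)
The plan is to construct the left adjoint explicitly and then verify the universal property. Given $\mR \in \cOTamb^G$, I would define its orbits $\mR_{\mathrm{or}}$ levelwise by setting
\[
\mR_{\mathrm{or}}(G/H) := \mR(G/H)\Big/\big\langle x - g\cdot x \;:\; g \in W_G(H),\; x \in \mR(G/H)\big\rangle,
\]
so that at each orbit we take the $W_G(H)$-coinvariants of the underlying abelian group.

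First, I would check that $\mR_{\mathrm{or}}$ inherits the structure of an $\cO$-Tambara functor. The crucial input is that every structure map in a Tambara functor --- restriction, transfer, conjugation, and the norms along admissible maps in $\cO$ --- is \emph{Weyl-equivariant} as an intrinsic feature of the formalism: the Weyl action arises from the conjugation (internal automorphism) part of the Mackey structure, and every other operation is natural with respect to it. For the additive pieces this descent to the quotients is immediate. For the norms, which are multiplicative rather than additive, Weyl-equivariance $N(g\cdot x) = g\cdot N(x)$ implies that $N(x) - N(g\cdot x) = N(x) - g\cdot N(x)$ is already among the relations killed in the target, so the norm still descends. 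Hence $\mR_{\mathrm{or}}$ is a well-defined $\cO$-Tambara functor, lying in $\cOTamb^G_{tr}$ by construction, and the quotient map $q\colon \mR \to i_*\mR_{\mathrm{or}}$ is a morphism of $\cO$-Tambara functors.

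Next, I would verify the universal property. Given any $\mS \in \cOTamb^G_{tr}$ and any morphism $f\colon \mR \to i_*\mS$ of $\cO$-Tambara functors, Weyl-equivariance of $f$ combined with the triviality of the Weyl action on $\mS(G/H)$ forces $f(g\cdot x) = g\cdot f(x) = f(x)$, so $f$ annihilates the subgroup $\langle x - g\cdot x\rangle$ and factors uniquely through $q$. This yields the natural bijection
\[
\Hom_{\cOTamb^G}(\mR, i_*\mS) \;\cong\; \Hom_{\cOTamb^G_{tr}}(\mR_{\mathrm{or}}, \mS),
\]
which is the desired adjunction.

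The one step I expect to require the most care is the first: verifying that the polynomial norm maps descend through a quotient by an additive subgroup, along with the distributivity/exponential compatibilities that the $\cO$-Tambara structure requires. The Weyl-equivariance observation above gives the key identity, but one still needs to chase it through each face of the relevant pullback--pushforward--norm diagrams encoding the $\cO$-Tambara axioms; once that routine verification is in place, the adjunction follows formally from the universal property of coinvariants.
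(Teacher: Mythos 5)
Your verification of the universal property is fine as far as it goes, but the construction of the left adjoint is not: the levelwise coinvariants $\mR(G/H)/\langle x-g\cdot x\rangle$ is in general not an $\cO$-Tambara functor, and not even a Green functor, because the multiplication (let alone the norms) does not descend through a quotient by a merely \emph{additive} subgroup. Concretely, take $G=C_{2}$ and $\mR=N_{e}^{C_{2}}\Z[a]$, so that $\mR(C_{2}/e)\cong\Z[a]\otimes\Z[a]\cong\Z[a,b]$ with the Weyl group acting by the swap $\sigma$ of $a$ and $b$. The additive subgroup generated by the elements $x-\sigma x$ is the image of $1-\sigma$, and every element $y$ of that image satisfies $\sigma y=-y$; but $a\cdot(a-b)=a^{2}-ab$ does not, so the subgroup is not an ideal and the ring structure does not pass to the quotient. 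The same issue defeats the treatment of the norms: the identity $N(g\cdot x)=g\cdot N(x)$ only controls differences of the form $N(x)-N(gx)$, whereas pushing a non-additive operation through an additive quotient requires $N(x+i)-N(x)$ to lie in the relations for \emph{every} $i$ in the subgroup. You flag this as the delicate step at the end, but it is not merely delicate --- for the levelwise additive coinvariants it is false.

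The repair is to form the orbits in the category $\cOTamb^{G}$ itself rather than levelwise in abelian groups: since $G$ is cyclic, every object of $\cOTamb^{G}$ is canonically a $G$-object via its intrinsic Weyl actions, and the orbits functor is the colimit over $BG$ computed in $\cOTamb^{G}$, i.e.\ the quotient of $\mR$ by the \emph{Tambara ideal} generated by the elements $x-g\cdot x$ (typically strictly larger than your additive subgroup). This is also what the identification of $\m{\mathbb W}_{G}(\mR)$ with $\m{\HH}_{0}$ of the twisted cyclic nerve produces, since $\HH_{0}(R;{}^{g}R)=R/(x-gx)$ is a quotient by an ideal. With that construction the adjunction is formal, which is why the paper calls it immediate: the quotient visibly has trivial Weyl action; and any morphism $f\colon\mR\to i_{\ast}\m{S}$ with $\m{S}$ in $\cOTamb^{G}_{tr}$ is automatically equivariant, so its kernel is a Tambara ideal containing every $x-g\cdot x$, and $f$ factors uniquely through the quotient. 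No levelwise descent of norms needs to be checked.
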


For Tambara functors, Mazur and Hoyer showed that norm is the left adjoint to the forgetful functor. This hold more generally for
\(\cO\)-Tambara functors \cite[Theorem~6.15]{BHincomplete}, provided the \(C_{np^{k}}\)-set \(C_{np^{k}}/C_{n}\) is an admissible set for \(\cO\). There is a preferred one, arising topologically. 

Let $\cO_{n}$ be the indexing system for $G=C_{np^{k}}$ where the admissible sets for a subgroup $H$ are all of those $H$-sets fixed by $C_{n}\cap H$ (The topological operad giving rise to this is the little disks operad for the universe given by infinitely many copies of the permutation representation on $G/C_{n}$). An $\cO_{n}$-algebra has an underlying Green functor for $C_{n}$ (with no norms), and it has norms connecting any two subgroups that contain $C_{n}$. 
\begin{proposition}[{\cite[Theorem 6.5]{BHincomplete}}]
The functor \(N_{C_{n}}^{C_{np^{k}}}\) is left adjoint to the forgetful functor
\[
i_{C_{n}}^{\ast}\colon \cO_{n}\mhyphen\Tamb^{C_{np^{k}}}\to\cO_{n}\mhyphen\Tamb^{C_{n}}\cong \Green^{C_{n}}.
\]
\end{proposition}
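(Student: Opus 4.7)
The plan is to follow the pattern of the Mazur--Hoyer proof for the full Tambara adjunction, adapting it to the incomplete setting specified by the indexing system $\cO_n$. First I would unpack exactly what an $\cO_n$-Tambara functor is: by construction of $\cO_n$, the admissible sets for a subgroup $H \subset C_{np^k}$ are the $H$-sets fixed by $C_n \cap H$, so the $\cO_n$-norms exist precisely between pairs of subgroups $K \subset H$ for which $C_n \cap K = C_n \cap H$. In particular, restricting an $\cO_n$-Tambara functor for $C_{np^k}$ along $i_{C_n}^\ast$ kills every nontrivial norm (since every subgroup of $C_n$ satisfies $C_n \cap K = K$, which forces only identity norms), leaving exactly the underlying $C_n$-Green functor.

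Next I would construct the unit and counit. The unit $\eta \colon \mR \to i_{C_n}^\ast N_{C_n}^{C_{np^k}} \mR$ is induced at the Green-functor level by the universal map for Hoyer's Kan extension description (Definition~\ref{def:normhoy}); this is well-defined because $i_{C_n}^\ast$ has no extra $\cO_n$-norm data to match. For the counit, given an $\cO_n$-Tambara functor $\mS$ for $C_{np^k}$, I would define $\epsilon \colon N_{C_n}^{C_{np^k}} i_{C_n}^\ast \mS \to \mS$ by using exactly the $\cO_n$-admissible norms of $\mS$: on a representable $\mA^{C_n}_T$, the norm $N_{C_n}^{C_{np^k}} \mA^{C_n}_T \cong \mA^{C_{np^k}}_{F_{C_n}(C_{np^k},T)}$ (Proposition~\ref{prop:NormofProjectives}), and the value of $\epsilon$ is forced by the $\cO_n$-norm $N_{C_n}^{C_{np^k}} \mS(C_n/C_n) \to \mS(C_{np^k}/C_{np^k})$, which is admissible since $C_n \cap C_n = C_n \cap C_{np^k}$ is the obstruction precisely captured by $\cO_n$. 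One then extends to all of $\mS$ by colimits and checks the triangle identities $(\epsilon \circ N\eta) = \mathrm{id}$ and $(i^\ast \epsilon \circ \eta) = \mathrm{id}$ using the universal property of the Kan extension.

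The main obstacle is verifying that the map $\epsilon$ respects \emph{all} of the structure of an $\cO_n$-Tambara functor, not just the additive Green structure: one must check it commutes with restrictions, transfers, Weyl conjugations, and crucially the $\cO_n$-admissible norms within $C_{np^k}$. For restrictions and transfers this is the standard Mazur--Hoyer argument since those are part of the underlying Green structure. For the $\cO_n$-norms, the check reduces to a compatibility of exponential diagrams for pairs $K \subset H$ with $C_n \cap K = C_n \cap H$; here the definition of $\cO_n$ is exactly tailored so that the outer norm $N_{C_n}^{C_{np^k}}$ carries each such admissible norm to another admissible norm (composition of admissibles stays admissible because the obstruction measured by intersection with $C_n$ is preserved). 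Finally I would combine these checks, together with the fact that $\cO_n$-Tambara functors for $C_n$ are just Green functors for $C_n$, to conclude the adjunction; the naturality of the bijection $\Hom_{\cO_n\mhyphen\Tamb^{C_{np^k}}}(N_{C_n}^{C_{np^k}} \mR, \mS) \cong \Hom_{\Green^{C_n}}(\mR, i_{C_n}^\ast \mS)$ follows formally from the unit--counit description.
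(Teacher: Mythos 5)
First, a point of reference: the paper offers no proof of this proposition at all --- it is quoted from \cite[Theorem 6.5]{BHincomplete} (the surrounding text also points to Theorem~6.15 there for the general criterion that $C_{np^{k}}/C_{n}$ be $\cO$-admissible). So there is no argument in the paper to compare yours against. Your overall strategy --- adapt the Mazur--Hoyer norm/restriction adjunction to the incomplete setting, observe that the restriction of $\cO_{n}$ to $C_{n}$ admits only identity norms so that $\cO_{n}\mhyphen\Tamb^{C_{n}}\cong\Green^{C_{n}}$, and check that $C_{np^{k}}/C_{n}$ is $\cO_{n}$-admissible --- is the right one, and your characterization of the admissible norms ($N_{K}^{H}$ exists iff $C_{n}\cap K=C_{n}\cap H$) is correct.

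There is, however, a concrete misstep in your construction of the unit. The universal map attached to Hoyer's left Kan extension (Definition~\ref{def:normhoy}) is the natural transformation $\mR(T)\to \big(N_{C_{n}}^{C_{np^{k}}}\mR\big)\big(F_{C_{n}}(C_{np^{k}},T)\big)$: this is the \emph{external norm}, which is multiplicative but not a map of Mackey functors (for $T=\ast$ and $\mR$ a ring it is the Teichm\"uller representative map), and it lands at the coinduced $G$-set rather than at $i_{C_{n}}^{\ast}N_{C_{n}}^{C_{np^{k}}}\mR$. The actual unit must be a map of $C_{n}$-Green functors $\mR\to i_{C_{n}}^{\ast}N_{C_{n}}^{C_{np^{k}}}\mR\cong\mR^{\Box p^{k}}$ (the identification is Theorem~\ref{thm:ResofNorm}), and the correct candidate is the inclusion of a single box factor, $\mathrm{id}\Box\eta\Box\cdots\Box\eta$, which is a Green-functor map because the box power is the coproduct in commutative Green functors. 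Relatedly, ``extending $\epsilon$ by colimits from representables'' is delicate --- the Hoyer norm is not a left adjoint on Mackey functors and only commutes with sifted colimits --- and is unnecessary: the counit has a direct description as the internalization map $n_{C_{n}}^{C_{np^{k}}}\colon N^{G/C_{n}}\mS=N_{C_{n}}^{C_{np^{k}}}i_{C_{n}}^{\ast}\mS\to\mS$ supplied by the $\cO_{n}$-commutative monoid structure on $\mS$, which is exactly the map the paper itself invokes in the proof of Theorem~\ref{thm:teich}. With the unit and counit so corrected, the triangle identities and the compatibility checks you list go through along the lines you describe.
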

Putting this together, we see that the Green Witt vector construction is a left adjoint.

\begin{theorem}
The length $(k+1)$ $p$-typical Green Witt vectors functor is left adjoint
to the functor which takes an $\cO_{n}$-Tambara functor with trivial
$G$-action and sends it to the underlying $C_{n}$-Green functor.
\end{theorem}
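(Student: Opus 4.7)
The plan is to exhibit the length $(k+1)$ $p$-typical Green Witt vectors functor as a composite of two known left adjoints and then appeal to the formal principle that a composite of left adjoints is left adjoint to the composite (in reverse) of the right adjoints. The real content has already been packaged in the two propositions cited immediately above together with the identification
\[
\m{\mathbb W}_{k+1}(\mR) \cong \big(N_{C_n}^{C_{np^k}}\mR\big)_{C_{np^k}}
\]
displayed earlier in this section; the argument is therefore essentially formal.

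First I would rewrite $\m{\mathbb W}_{k+1}$ as the composite
\[
\Green^{C_n} \xrightarrow{N_{C_n}^{C_{np^k}}} \cO_n\mhyphen\Tamb^{C_{np^k}} \xrightarrow{(-)_{C_{np^k}}} \cO_n\mhyphen\Tamb^{C_{np^k}}_{tr}
\]
using the displayed isomorphism. The proposition from~\cite{BHincomplete} cited just above shows that the first arrow is left adjoint to $i_{C_n}^{\ast}$; here it is crucial that we work with the indexing system $\cO_n$, since this is precisely what makes $C_{np^k}/C_n$ admissible and so allows the norm to land in $\cO_n$-Tambara functors rather than merely in Green functors. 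The second arrow, by the orbits proposition, is left adjoint to the inclusion $i_{\ast}$ of the trivial-Weyl-action subcategory.

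Composing these two adjunctions yields an adjunction whose left adjoint is $(-)_{C_{np^k}} \circ N_{C_n}^{C_{np^k}} \cong \m{\mathbb W}_{k+1}$ and whose right adjoint is $i_{C_n}^{\ast} \circ i_{\ast}$. The latter is exactly the functor that takes an $\cO_n$-Tambara functor with trivial $G$-action and returns its underlying $C_n$-Green functor, which is the right adjoint asserted in the theorem.

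I do not anticipate a real obstacle; the potentially subtle points have been dispatched earlier. The one place where a line of remark might be warranted is in observing that the orbits are formed in $\cO_n$-Tambara functors, so that the Weyl action being quotiented is compatible with the multiplicative norms and transfers and the quotient inherits a well-defined $\cO_n$-Tambara structure; this is already built into the statement of the orbits adjunction above. The only other thing to note is that the identification of $\m{\mathbb W}_{k+1}$ with the (non-derived) orbits, rather than the homotopy orbits, is what allows the composition argument to go through at the level of $1$-categories: it follows from Proposition~\ref{prop:ZHomotopyOrbits} together with the fact that $\pi_0$ of homotopy orbits agrees with the ordinary quotient.
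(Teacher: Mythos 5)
Your proposal is correct and is exactly the argument the paper intends: it identifies $\m{\mathbb W}_{k+1}$ with the composite of the norm $N_{C_n}^{C_{np^k}}$ (left adjoint to $i_{C_n}^\ast$ on $\cO_n$-Tambara functors) followed by the orbits functor (left adjoint to the inclusion of the trivial-Weyl-action subcategory), and then composes the adjunctions. The paper's own ``Putting this together'' proof is precisely this composition, so there is nothing to add.
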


We also have a notion of Ghost coordinates for the Green Witt
vectors given by the geometric fixed points. We
give two versions, one internal to Green functors and one describing
the evaluation.

\begin{definition}
Let $\mR$ be a Green functor for $C_{n}$. Then the ghost coordinates
for $\m{\mathbb W}_{C_{nm}}(\mR)$ are the maps of Green functors, parameterized by the subgroups $H$ of $C_{nm}$,
\[
\m{\mathbb W}_{C_{nm}}(\mR)\xrightarrow{\phi_{H}} \CoInd_{H}^{C_{nm}}
\Big(\tildeEF_{H} i_{H}^{\ast}\m{\mathbb W}_{C_{nm}}(\mR)\Big)
\]
adjoint to the canonical maps
\[
i_{H}^{\ast}\m{\mathbb W}_{C_{nm}}(\mR)\xrightarrow{\hat{\phi}_{H}} 
\tildeEF_{H} i_{H}^{\ast}\m{\mathbb W}_{C_{nm}}(\mR).
\]
\end{definition}

Put more conceptually, the map \(\phi_{H}\) is simply the restriction to \(H\) followed by the \(H\)-geometric fixed points. We unpack this slightly in the \(p\)-typical case to get a more nuanced understanding of these maps, describing in more detail what the ghost coordinates look like for subgroups \(H\) between \(C_{n}\) and \(C_{np^{k}}\). In Mackey functors, induction and coinduction agree and are both the right adjoint to the forgetful functor. The map \(\phi_{H}\) is then the composite
\[
\m{\mathbb W}_{C_{np^{k}}}(\mR)\xrightarrow{\m{\res}} \CoInd_{H}^{C_{np^{k}}} i_{H}^{\ast}\m{\mathbb W}_{C_{np^{k}}}(\mR) \xrightarrow{\CoInd\hat{\phi}_{H}}  \CoInd_{H}^{C_{np^{k}}}
\Big(\tildeEF_{H} i_{H}^{\ast}\m{\mathbb W}_{C_{np^{k}}}(\mR)\Big),
\]
where the first map, the unit of the adjunction, is a kind of extension to Mackey functors of the restriction map
\[
\m{\mathbb W}_{C_{np^{k}}}(\mR)(C_{np^{k}}/C_{np^{k}})\to \m{\mathbb W}_{C_{np^{k}}}(\mR)(C_{np^{k}}/H).
\]
Now if $C_{n}\subset H\subset C_{np^{k}}$, then we
have a natural isomorphism
\[
i_{H}^{\ast}\m{\mathbb W}_{C_{np^{k}}}(\mR)\cong \m{\mathbb W}_{H}(\mR)
\]
by Proposition~\ref{prop:IntermediateRestrictions}, and hence the target of the map is the coinduced \(H\)-Green functor
\[
\tildeEF_{H} \m{\mathbb W}_{H}(\mR)=\pi^{\ast}_{H} \Phi^{H} \m{\mathbb W}_{H}(\mR).
\]
Proposition~\ref{prop:Cyclotomic} shows then that the geometric fixed points in question are just the ordinary Hochschild homology \(\HH_{0}\) of the \(C_{n}\)-geometric fixed points of \(\mR\). 

\begin{remark}
When \(C_{n}\) is the trivial group, this reproduces the foundational work of Brun \cite{Brun2} building Witt vectors via the adjoints to the forgetful functor in Tambara functors. Our Ghost coordinates also are reflected in Brun's work, and we have chosen notation which matches his. The most transparent connection is via the following dictionary: an element
\[
(r_{H})\in\prod_{(H)}R,
\]
where \(H\) ranges over the conjugacy classes of subgroups, corresponds to the element
\[
\sum_{(H)} \tr_{H}^{G}n_{e}^{H}(r)\in N_{e}^{G}(R)(G/G).
\]
This is apparent from Brun's treatment; we include it only to help orient the reader.
\end{remark}

\subsection{Tambara Functors and Teichm\"uller Lifts}\label{ssec:Teich}

Classically, the Teichm\"uller maps are (compatible) multiplicative
maps
\[
R\to W_n(R)
\]
from $R$ to the length $n$ Witt vectors.  In this section, we
construct analogous maps in the setting of the twisted Hochschild
complex for the $C_{mn}$-Witt vectors.  The existence of these maps
is closely connected to the existence of a multiplicative structure on
$\mR$; although we restrict attention to the case where $\m{R}$ is a
Tambara functor, analogues of our results hold for the incomplete
Tambara functors of~\cite{BHincomplete} (i.e., $\cO$-commutative
monoid objects in Mackey functors).

The basic building block for our work in this section is the following
theorem, which constructs precursors that are maps of dg Green
functors.

\begin{theorem}\label{thm:teich}
If $\m{R}$ is a \(G\)-Tambara functor, then for all $H\subset G\subset
G'\subsetneq S^1$, we have maps of chain complexes of Green functors
\[
\HC{G'}_{G}\big((N_H^G i_H^\ast \mR)_{W_G(H)}\big)_{\ast}\to \HC{G'}_{G}(\mR)_{\ast}.
\]
Precomposing with the canonical quotient map gives us maps of chain
complexes
\[
\HC{G'}_{H}(i_{H}^{\ast}\mR)_{\ast}=\HC{G'}_{G}(N_H^G i_H^\ast\mR)_{\ast}\to \HC{G'}_{G}(\mR)_{\ast}.
\]
\end{theorem}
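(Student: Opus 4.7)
The plan is to build the map from a universal counit in Tambara functors and then push it through the functoriality of $\HC{G'}_G(\mhyphen)_\ast$ on $G$-Green functors. The whole argument is essentially: $N_H^G \dashv i_H^\ast$ on Tambara functors gives a canonical map $N_H^G i_H^\ast \mR \to \mR$, and this map is Weyl-invariant for formal reasons, so it descends to the coinvariants.

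First I would invoke the adjunction between $N_H^G$ and $i_H^\ast$ on $G$-Tambara functors (attributed to Mazur--Hoyer and cited in the paper, together with its generalization to $\cO$-Tambara functors). For any $G$-Tambara functor $\mR$ this produces a counit
\[
\epsilon \colon N_H^G i_H^\ast \mR \to \mR
\]
of $G$-Tambara functors, which in particular is a map of $G$-Green functors. This is exactly the step that uses the Tambara hypothesis: for a mere Green functor there is no such adjunction and no such counit.

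Next I would check that $\epsilon$ factors through the $W_G(H)$-coinvariants. By Proposition~\ref{prop:LanGmodH}, the source $N_H^G i_H^\ast \mR$ is naturally a $W_G(H)$-object in $G$-Green functors, with the action arising from the action of $\mathrm{Aut}_G(G/H)\cong W_G(H)^{op}$ on the coinduction functor $F_H(G,\mhyphen)$ used to define the Hoyer norm. The target $\mR$ carries no such extra action, and the counit $\epsilon$ is built purely from the adjunction unit/counit on Mackey data, which is insensitive to the $\mathrm{Aut}_G(G/H)$-automorphism. Checking on representables via Proposition~\ref{prop:NormofProjectives} shows that $\epsilon$ is $W_G(H)$-equivariant with the trivial action on the target, so it descends uniquely to a map of $G$-Green functors
\[
\bar\epsilon \colon \big(N_H^G i_H^\ast \mR\big)_{W_G(H)} \to \mR.
\]

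Finally I would apply the construction $\HC{G'}_G(\mhyphen)_\ast = \HC{G'}_\bullet\big(N_G^{G'}(\mhyphen);{}^g N_G^{G'}(\mhyphen)\big)$ to $\bar\epsilon$. Since $N_G^{G'}$ is strong symmetric monoidal and the twisted cyclic nerve is manifestly functorial in maps of $G$-Green functors (the generator $g\in G'$ acts by Green functor maps, so the twisted face map is natural), this gives the first desired map of chain complexes of $G'$-Green functors. Precomposing with the canonical quotient $N_H^G i_H^\ast \mR\twoheadrightarrow (N_H^G i_H^\ast\mR)_{W_G(H)}$ gives the second map, while the identification
\[
\HC{G'}_H(i_H^\ast \mR)_\ast = \HC{G'}_G(N_H^G i_H^\ast \mR)_\ast
\]
follows directly from Proposition~\ref{prop:Composite}, which supplies the isomorphism $N_G^{G'}\circ N_H^G\cong N_H^{G'}$ needed to identify the two underlying simplicial objects. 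The main obstacle is the Weyl-equivariance verification in the middle step; once that bookkeeping is in place, the rest is pure functoriality.
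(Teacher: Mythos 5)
Your overall architecture is the same as the paper's: produce the canonical map $N_H^G i_H^\ast \mR \to \mR$ from the Tambara structure, show it is $W_G(H)$-invariant so that it descends to the coinvariants, and then apply the functoriality of $\HC{G'}_{G}(\mhyphen)_{\ast}$ together with $N_{G}^{G'}\circ N_H^{G}\cong N_H^{G'}$ (Proposition~\ref{prop:Composite}) for the final identification. The first and last steps are fine and match the paper.

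The justification of the middle step is not right as stated, though the claim itself is true. You assert that the counit ``is built purely from the adjunction unit/counit on Mackey data, which is insensitive to the $\mathrm{Aut}_G(G/H)$-automorphism,'' and that equivariance can be checked on representables via Proposition~\ref{prop:NormofProjectives}. But the paper explicitly remarks that the Hoyer norm is \emph{not} a left adjoint on Mackey functors; the adjunction $N_H^G\dashv i_H^\ast$ lives only at the level of Tambara functors, so the counit genuinely depends on the internal norm maps of $\mR$ and is not determined by the underlying Mackey functor. Representable Mackey functors are not Tambara functors, so checking there does not address the question. The paper's actual argument supplies the missing ingredient: by Mazur--Hoyer, a Tambara functor is a $G$-commutative monoid, i.e.\ the assignment $T\mapsto N^T\mR$ extends functorially to \emph{all} maps of finite $G$-sets, in particular to the automorphisms of $G/H$; this functoriality is precisely the commutativity of the triangle $n_H^G\circ g = n_H^G$ for $g\in W_G(H)$, with Proposition~\ref{prop:LanGmodH} used to externalize the Weyl action. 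Substituting that argument for your ``check on representables,'' the rest of your proof goes through.
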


\begin{proof}
Mazur and Hoyer showed that Tambara functors are the $G$-commutative
monoids in the standard $G$-symmetric monoidal structure on Mackey
functors \cite{MazurArxiv, HoyerThesis}.  In particular, this means
that we have an extension of the functor
\[
T\mapsto N^T\m{R}
\]
from the category of finite $G$-sets and isomorphisms to the full
category of finite $G$-sets and moreover that this extension is
compatible with restriction to subgroups.  Restricting attention to
$T=G/H$, we have as part of this data a map (necessarily of
commutative Green functors)
\[
N^{G/H} \m{R}\xrightarrow{n_H^G} \m{R},
\]
such that for all $g\in W_G(H)$, we have a commutative diagram
\[
\xymatrix{ {N^{G/H}\m{R}}\ar[r]^-{n_H^G}\ar[d]_-{g} & {\mR}
  \\ {N^{G/H}\m{R}}\ar[ru]_-{n_H^G}. }
\]
(Note that here we have implicitly used
Proposition~\ref{prop:LanGmodH} to understand the external action of
the Weyl group on the whole Mackey functor).  Therefore, we have
induced maps  
\[
N_H^Gi_H^\ast\mR\to\Big(N_H^G i_H^\ast\mR\Big)_{W_G(H)}\to\mR
\]
of $G$-Green functors, and the theorem follows by applying $\HC{G'}_{\ast}(\mhyphen)$
to these maps.
\end{proof}

The maps of Theorem~\ref{thm:teich} are the antecedent to the
Teichm\"uller maps on the ordinary Witt vectors, linking the values of
the dg Green functor together for different orbits $G/H$.  To apply
this, we need to recall how the classical norms on a Tambara functor
fit into the externalized version.

\begin{proposition}[{\cite{MazurArxiv, HoyerThesis}}]\label{prop:Norms}
For any $H\subset G$ and for any Tambara functor $\m{R}$, there is a
multiplicative map
\[
N_H^G\colon \m{R}(G/H)\to N^{G/H}\m{R}(G/G).
\]
If $g\in W_G(H)$, then we have a commutative square
\[
\xymatrix{ 
{\m{R}(G/H)}\ar[r]^-{N_H^G}\ar[d]_g & {N^{G/H}\m{R}(G/G)}\ar[d]^{g} \\ 
{\m{R}(G/H)}\ar[r]_-{N_H^G} & {N^{G/H}\m{R}(G/G).}  }
\]
These give the norm maps in the usual definition of a Tambara functor.
\end{proposition}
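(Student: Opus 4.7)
The plan is to leverage the Mazur--Hoyer characterization of Tambara functors as $G$-commutative monoid objects in $\Mack$ for the $G$-symmetric monoidal structure $\{N^T\}$, which has been discussed throughout Section~\ref{ssec:GSymMackey}. Under this correspondence, the data of a Tambara structure on $\m{R}$ includes external multiplication maps $n^{G/H}\colon N^{G/H}\m{R} \to \m{R}$ of Green functors for every subgroup $H$; the classical internal norms $N_H^G$ will be read off from Yoneda applied to these external structure maps.

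The first step is to use the Yoneda lemma to identify $\m{R}(G/H) \cong \Hom_{\Mack_G}(\mA_{G/H}, \m{R})$, so that an element $r \in \m{R}(G/H)$ becomes a morphism of Mackey functors $\tilde{r}\colon \mA_{G/H} \to \m{R}$. Applying the functor $N^{G/H} = N_H^G \circ i_H^\ast$, which is strong symmetric monoidal (by Proposition~\ref{prop:NormofProjectives} together with the fact that $F_H(G,-)$ preserves products), produces a map of Green functors
\[
N^{G/H}\tilde{r}\colon N^{G/H}\mA_{G/H} \to N^{G/H}\m{R}.
\]
The second step is to pin down a canonical element $\delta \in N^{G/H}\mA_{G/H}(G/G)$. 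Using Proposition~\ref{prop:NormofProjectives} to compute $N^{G/H}\mA_{G/H} \cong \mA_{F_H(G,\, i_H^\ast G/H)}$, the identity section $G \to G/H$ gives a distinguished $G$-fixed point of $F_H(G,\, i_H^\ast G/H)$, and hence a distinguished element $\delta$ representing the ``unit map'' $\mA \to N^{G/H}\mA_{G/H}$. I would then define $N_H^G(r) := (N^{G/H}\tilde{r})(\delta) \in N^{G/H}\m{R}(G/G)$.

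For multiplicativity, the construction factors as the composite of a strong symmetric monoidal functor applied to a morphism with evaluation at the unit $\delta$; since $\delta$ corresponds to a Green functor map $\mA \to N^{G/H}\mA_{G/H}$, the assignment $r \mapsto N^{G/H}(\tilde{r})(\delta)$ intertwines the multiplication on $\m{R}(G/H)$ (which is inherited from the Green functor structure via the diagonal $\mA_{G/H} \to \mA_{G/H}\Box\mA_{G/H}$) with the multiplication on $N^{G/H}\m{R}(G/G)$. For the Weyl equivariance square, I would appeal directly to Proposition~\ref{prop:LanGmodH}, which shows that the composite $N_H^G i_H^\ast$ is canonically a $W_G(H)$-object in Mackey functors; the induced $W_G(H)$-action on both source and target is then compatible by naturality of the construction in $\tilde{r}$ and $\delta$.

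The main obstacle I anticipate is the last sentence of the statement, namely that this categorical $N_H^G$ agrees with the classical polynomial-style norm that appears in Tambara's original definition. Verifying this requires unpacking the distributive law implicit in the isomorphism of Theorem~\ref{thm:hoyer} and checking compatibility on representables, where both constructions can be compared using the identification $N_H^G \mA^H_T \cong \mA^G_{F_H(G,T)}$; the work is essentially bookkeeping but must be done carefully because the Yoneda-based definition obscures the explicit combinatorics that the classical definition makes manifest. Since this is the content of Mazur's and Hoyer's theses, I would cite those sources rather than redo the verification in full.
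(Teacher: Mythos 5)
The paper does not actually prove this proposition: it is stated with a citation to Mazur's and Hoyer's theses, so there is no in-paper argument to compare against. Your sketch is essentially the standard construction from those sources (norm an element viewed as a map out of a representable, using strong symmetric monoidality of $N_H^G$ and the computation of norms of representables), and deferring the identification with Tambara's classical norms to the cited theses is exactly what the paper itself does. So the overall strategy is fine.

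There is, however, one concrete error in your construction of the distinguished element $\delta$. Under the paper's conventions, $F_H(G,T)$ is the set of $H$-equivariant maps $G\to T$ with $(gf)(g')=f(g'g)$, so a $G$-fixed point of $F_H(G,T)$ is a \emph{constant} $H$-map, and $F_H(G,T)^G\cong T^H$. The quotient map $q\colon G\to G/H$ is not constant, hence is not a $G$-fixed point of $F_H(G,\,i_H^\ast(G/H))$; your ``identity section'' does not exist as a fixed point. The fixed points of $F_H(G,\,i_H^\ast(G/H))$ are $(G/H)^H\cong W_G(H)$, and the correct canonical choice of $\delta$ is the constant map at $eH$. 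Equivalently (and more cleanly), pass through the Frobenius adjunction $\m{R}(G/H)\cong\Hom_{\Mack_H}(\mA^H,i_H^\ast\m{R})$ and apply $N_H^G$ directly, using $N_H^G\mA^H\cong\mA^G$ from Proposition~\ref{prop:NormofProjectives}; your $\delta$ is then $N_H^G$ applied to the unit inclusion $\mA^H\to i_H^\ast\mA^G_{G/H}$. Relatedly, the Weyl square is not quite ``naturality alone'': you need to identify how $W_G(H)$ acts on the fixed-point set $(G/H)^H\cong W_G(H)$ under the action of Proposition~\ref{prop:LanGmodH} (precomposition with automorphisms of $G/H$ translates $\delta$ to the constant map at $gH$), and check that this matches twisting $\tilde r$ by $g$. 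Both verifications are routine once $\delta$ is correctly identified, but as written the element you evaluate at is not well defined.
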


We now construct the Teichm\"uller maps on the $C_{mn}$-Witt vectors.

\begin{theorem}
Let $\m{R}$ be a \(G\)-Tambara functor for \(G=C_{nm}\) a cyclic group.  For all $H\subset G$, we have a natural Teichm\"uller-style, multiplicative map
\[
\m{R}(G/H)\to \big(\m{R}(G/H)\big)_{W_G(H)}\to \big(\m{\HH}^{G}_0(\m{R})\big)(G/G)=\m{\mathbb W}_{G}(\mR)(G/G)
\]
lifting the \(|G/H|\)th power map
\[
\m{R}(G/H)\xrightarrow{r\mapsto r^{|G/H|}} \m{R}(G/H)_{W_{G}(H)}=\m{\mathbb W}_{G}(\mR)(G/H).
\]
\end{theorem}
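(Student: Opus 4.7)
The plan is to assemble the Teichmüller map from three ingredients supplied by the paper: the Tambara norm of Proposition~\ref{prop:Norms}, the Hochschild-level map of Theorem~\ref{thm:teich}, and the identification of Witt vectors with ordinary coinvariants afforded by Proposition~\ref{prop:ZHomotopyOrbits}.

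First, the Tambara norm is a multiplicative map $N_H^G\colon \m{R}(G/H) \to N^{G/H}\m{R}(G/G) = (N_H^G i_H^* \m{R})(G/G)$ which is Weyl-equivariant by Proposition~\ref{prop:Norms}. Since the externalized Weyl action used for $(N_H^G i_H^*\m{R})_{W_G(H)}$ is compatible with the Weyl action on $\m{R}(G/H)$ appearing in the classical Tambara structure (Proposition~\ref{prop:LanGmodH}), this map factors through the coinvariants $\m{R}(G/H)_{W_G(H)}$. Next, applying Theorem~\ref{thm:teich} with $G' = G$ supplies a map of dg Green functors
\[
\HC{G}_G\bigl((N_H^G i_H^*\m{R})_{W_G(H)}\bigr)_{\ast} \to \HC{G}_G(\m{R})_{\ast};
\]
evaluating at $G/G$ and passing to $H_0$ produces a ring homomorphism into $\m{\mathbb{W}}_G(\m{R})(G/G)$. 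Precomposing this with the Tambara-norm-induced map
\[
\m{R}(G/H) \twoheadrightarrow \m{R}(G/H)_{W_G(H)} \longrightarrow \bigl((N_H^G i_H^*\m{R})_{W_G(H)}\bigr)(G/G),
\]
regarded as landing in the $0$-simplices of the relevant Hochschild complex, yields the desired Teichmüller-style map. Multiplicativity follows because every ingredient (the Tambara norm, the quotient to coinvariants in a commutative ring, and $H_0$ of a map of simplicial commutative Green functors) is multiplicative.

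To verify that this map lifts the $|G/H|$-th power map, I use Proposition~\ref{prop:ZHomotopyOrbits} to identify $\m{\mathbb{W}}_G(\m{R})(G/H) \cong \m{R}(G/H)_{W_G(H)}$, in which the Mackey restriction $\m{\mathbb{W}}_G(\m{R})(G/G) \to \m{\mathbb{W}}_G(\m{R})(G/H)$ is induced by $\res_H^G$ on $\m{R}$. The composite of the Teichmüller lift with this restriction is therefore $\res_H^G \circ n_H^G \circ N_H^G$, the classical Tambara norm $\nu_H^G$ followed by restriction. The Mackey double-coset identity for Tambara functors (which, for $H$ normal in the abelian group $G$, contributes no nontrivial tensor inductions) gives
\[
\res_H^G \nu_H^G(r) = \prod_{g\in W_G(H)} g\cdot r,
\]
and in the quotient $\m{R}(G/H)_{W_G(H)}$ this collapses precisely to $r^{|G/H|}$. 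The main point requiring care is the bookkeeping matching the externalized Weyl action of Proposition~\ref{prop:LanGmodH} (used to form the coinvariants in Theorem~\ref{thm:teich}) with the intrinsic Weyl action on $\m{R}(G/H)$; this compatibility is inherent in the construction of the externalized $G$-symmetric monoidal structure from a Tambara functor, so no additional argument is required.
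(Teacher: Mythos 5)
Your proposal is correct and follows essentially the same route as the paper's proof: both apply Theorem~\ref{thm:teich} with $G'=G$, use the Weyl-equivariance of the norm map from Proposition~\ref{prop:Norms} to factor through $\m{R}(G/H)_{W_G(H)}$, and verify the lifting claim via the double-coset identity $\res_H^G\nu_H^G(r)=\prod_{g\in W_G(H)}gr$, which collapses to $r^{|G/H|}$ in the Weyl coinvariants. The only difference is expository: you spell out the identification $\m{\mathbb W}_G(\m{R})(G/H)\cong\m{R}(G/H)_{W_G(H)}$ via Proposition~\ref{prop:ZHomotopyOrbits} and the compatibility of the externalized and intrinsic Weyl actions, both of which the paper leaves implicit.
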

\begin{proof}

We apply Theorem~\ref{thm:teich} to the case \(G'=G\).
Since the external norm map factors through the Weyl-coinvariants in Green functors, we have a sequence of Green functors
\[
N^{G/H}\mR\to (N^{G/H}\mR)_{W_{G}(H)}\to \mR,
\]
and composing with the map to \(\m{\mathbb W}_{G}(\mR)\), this gives a sequence of maps
\[
N^{G/H}\mR\to (N^{G/H}\mR)_{W_{G}(H)}\to \m{\mathbb W}_{G}(\mR).
\]
Evaluating this at \(G/G\) gives a sequence of commutative rings, and Proposition~\ref{prop:Norms} says that the natural norm map
\[
\mR(G/H)\to N^{G/H}\mR(G/G)
\]
is Weyl equivariant, and hence it extends to the diagram of multiplicative monoids 
\[
\begin{tikzcd}
{\mR(G/H)}
	\ar[r]
	\ar[d, "N_{H}^{G}"']
	&
{\mR(G/H)_{W_{G}(H)}}
	\ar[d, "N_{H}^{G}"]
	& 
	\\
{N^{G/H}\mR(G/G)}
	\ar[r]
	&
{(N^{G/H}\mR)_{W_{G}(H)}(G/G)}
	\ar[r]
	&
{\m{\mathbb W}_{G}(\mR)(G/G).}
\end{tikzcd}
\]

The second claim follows from the identification of the restriction of the norm: 
for any \(r\in\mR(G/H)\), we have 
\[
i_{H}^{\ast}N_{H}^{G}(r)=\prod_{g\in W_{G}(H)}gr.
\]
In the Weyl coinvariants, these all agree.
\end{proof}

To justify calling these Teichm\"uller maps, we connect them to the
classical construction in Witt vectors.  We first need an elementary
proposition.

\begin{proposition}
Let $R$ be a commutative ring and $G$ a finite group, then the
commutative Green functor $N_e^G(R)$ has a canonical Tambara functor
structure.
\end{proposition}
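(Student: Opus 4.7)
The plan is to obtain the Tambara structure on $N_e^G(R)$ by transport from the topological norm. The key ingredients are: (a) Brun's theorem that for a $(-1)$-connected genuine commutative ring $G$-spectrum $E$, the Mackey functor $\m{\pi}_0^G E$ is naturally a Tambara functor; (b) the fact that the Hill-Hopkins-Ravenel norm $N_e^G$ sends commutative ring spectra to commutative ring $G$-spectra and preserves connectivity; and (c) Theorem~\ref{thm:hoyer} identifying the algebraic norm with $\m{\pi}_0^G$ applied to the topological norm.

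First, I would form the Eilenberg-MacLane spectrum $HR$, which is a $(-1)$-connected commutative ring spectrum since $R$ is a commutative ring. Next, apply the HHR norm to obtain $N_e^G(HR)$, a $(-1)$-connected commutative ring orthogonal $G$-spectrum. By Brun's theorem, $\m{\pi}_0^G N_e^G(HR)$ carries a canonical Tambara functor structure. Finally, by Theorem~\ref{thm:hoyer}, this Mackey functor is canonically isomorphic to the algebraically defined $N_e^G(R)$, and by Proposition~\ref{prop:pizerosym} together with the strong symmetric monoidality of both norms, the isomorphism is one of commutative Green functors. Transport of structure along this isomorphism produces the canonical Tambara structure on $N_e^G(R)$.

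A purely algebraic alternative is to invoke the Mazur-Hoyer adjunction in the case $H=e$: the functor $N_e^G$ is left adjoint to the restriction $i_e^{\ast}\colon \Tamb^G \to \Tamb^e$, where the target is just the category of commutative rings. Hence $N_e^G R$ is automatically a $G$-Tambara functor (namely the free one on $R$), whose underlying Green functor is canonically the left Kan extension along coinduction, matching Definition~\ref{def:normhoy}.

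There is no serious obstacle here, as the substantive work was done in the cited results. The only content to verify is that the underlying Green functor structure produced by either construction agrees with the one defined algebraically via left Kan extension; this compatibility is precisely the content of Theorem~\ref{thm:hoyer}. Naturality of $H(-)$ and functoriality of the norms then ensures that the Tambara structure is intrinsic to $R$, not dependent on the route chosen.
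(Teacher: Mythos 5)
Your first argument is essentially the paper's own proof: the paper likewise takes a commutative ring spectrum model of $HR$, uses that the norm is left adjoint to the forgetful functor on commutative ring spectra to get a genuine commutative ring $G$-spectrum $N_e^G HR$, and concludes via the fact that $\m{\pi}_0$ is lax $G$-symmetric monoidal (i.e.\ Brun's theorem), with the identification with the algebraic norm being immediate from Definition~\ref{def:normhhr}. Your purely algebraic alternative via the Mazur--Hoyer adjunction is also valid and is consistent with results the paper cites elsewhere, but the main route you propose coincides with the paper's.
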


\begin{proof}
For a commutative ring $R$, there is a model of the Eilenberg-MacLane
functor $HR$ that is a commutative ring spectrum. Since the norm
functor is the left adjoint to the forgetful functor on commutative
ring spectra, $N_e^G HR$ is a $G$-commutative ring orthogonal
$G$-spectrum.  Since $\m{\pi}_0$ is lax $G$-symmetric monoidal, the result
now follows.
\end{proof}

As an immediate corollary, we obtain a natural ``Teichm\"uller lift''.

\begin{corollary}
For any commutative ring $R$ and for any cyclic group \(G=C_{n}\), we
have a natural, multiplicative map
\[
R\mapsto \m{\HH}_e^G(R)_0(G/G)={\mathbb W}_{\langle n\rangle}(R).
\]
\end{corollary}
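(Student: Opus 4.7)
The strategy is to promote the commutative ring $R$ to a genuine $G$-Tambara functor and then invoke the Teichm\"uller map constructed in the preceding theorem. Concretely, by the proposition immediately preceding the corollary, the Green functor $N_e^G R$ carries a canonical $G$-Tambara functor structure (inherited from the fact that $N_e^G HR$ is a $G$-$E_\infty$ ring spectrum and $\m{\pi}_0$ is lax $G$-symmetric monoidal).

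Applying the Teichm\"uller theorem of Section~\ref{ssec:Teich} to $\m{R} := N_e^G R$ with the subgroup $H = e \subset G$, I would obtain a natural multiplicative map
\[
(N_e^G R)(G/e) \longrightarrow \m{\mathbb W}_G(N_e^G R)(G/G),
\]
which, according to the theorem, lifts the $|G|$-th power map into the Weyl coinvariants---exactly the analogue of the classical Frobenius twist present in the standard Teichm\"uller formula.

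Next I would identify the source and target explicitly. On the source side $(N_e^G R)(G/e) \cong R$; this is the underlying ring of the norm, which recovers $R$ (as visible in the explicit Mazur-style $\mathbb F_p$ calculation of Section~\ref{sec:Relative}, and more generally from the fact that $i_e^{\ast}$ is a right inverse to $N_e^G$ at the level of underlying rings). On the target side, unwinding the definition of $G$-Witt vectors for a $G$-Green functor and using $N_G^G N_e^G R = N_e^G R$ yields
\[
\m{\mathbb W}_G(N_e^G R)(G/G) \;=\; \m{\HH}_G^G(N_e^G R)_0(G/G) \;=\; \m{\HH}_e^G(R)_0(G/G),
\]
since both relative twisted cyclic nerves are built from the same simplicial object $\HC{G}_{\bullet}(N_e^G R;{}^{g}N_e^G R)$. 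Combining with Proposition~\ref{prop:witt} then identifies this with $\mathbb W_{\langle n\rangle}(R)$.

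The main obstacle is essentially bookkeeping: one must verify that $(N_e^G R)(G/e)\cong R$ and that the two presentations of $\m{\mathbb W}_G$ (the one via $H=e$ with input $R$, and the one via $H=G$ with input $N_e^G R$) really do coincide as commutative rings, not merely abstractly. Both points fall out of Hoyer's description of the norm as a left Kan extension along coinduction, together with the definitional equality $N_G^G = \mathrm{id}$. Naturality in $R$ is then immediate, since every functor in the chain---$N_e^G(-)$, the Tambara structure, the Teichm\"uller construction, and the isomorphism with $\mathbb W_{\langle n\rangle}$---is natural in the input ring.
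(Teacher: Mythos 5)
Your overall strategy---endow $N_e^G R$ with its canonical Tambara structure via the preceding proposition and feed it into the Teichm\"uller theorem with $H=e$---is exactly the route the paper intends, and your identification of the target is fine: $\m{\mathbb W}_G(N_e^G R)=\m{\HH}_G^G(N_e^G R)_0=\m{\HH}_e^G(R)_0$ because $N_G^G$ is the identity and both constructions are the homology of the same twisted cyclic nerve on $N_e^G R$. The gap is in the source. You assert $(N_e^G R)(G/e)\cong R$, but this contradicts Theorem~\ref{thm:ResofNorm}: taking $H=J=e$ there gives $i_e^{\ast}N_e^G(\mhyphen)\cong(\mhyphen)^{\Box [G:e]}$, so $(N_e^G R)(G/e)\cong R^{\otimes |G|}$ (tensor over $\Z$), with $G$ permuting the factors. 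The restriction $i_e^{\ast}$ is emphatically not a right inverse to $N_e^G$ on underlying rings; the $\mathbb F_p$ computation you cite is consistent with the correct formula only because $\mathbb F_p\otimes_{\Z}\mathbb F_p\cong\mathbb F_p$, and it is not evidence for your general claim. As written, the Teichm\"uller theorem therefore hands you a multiplicative map $R^{\otimes n}\to\m{\HH}_e^G(R)_0(G/G)$, not a map out of $R$.

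The repair requires one more input: a multiplicative map $R\to R^{\otimes n}=(N_e^G R)(G/e)$ to precompose with. The natural choice is the unit of the norm--restriction adjunction, $r\mapsto r\otimes 1\otimes\cdots\otimes 1$, which is even a ring homomorphism; the composite with the theorem's map is then multiplicative and natural in $R$. (Equivalently, one may send $r$ to the Tambara norm $N(r\otimes 1\otimes\cdots\otimes 1)\in (N_e^G R)(G/G)$ and then project to the Weyl coinvariants $\m{\mathbb W}_G(R)(G/G)$.) One should also note that the restriction of this element to $G/e$ is $r\otimes r\otimes\cdots\otimes r$, whose image in $\HH_0$ is $r^{\,n}$; this is what makes the ghost-coordinate compatibility---the defining property of a Teichm\"uller lift, as asserted in the sentence following the corollary---come out correctly. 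Without some such map $R\to R^{\otimes n}$ your argument does not produce the stated map.
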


This is a Teichm\"uller lift in the sense that it provides a section of
the $G$\textsuperscript{th} ghost map from $\HH_0^G(R)\to R$.  Moreover, the $H$-ghost map
for any subgroup $H\subset G$ composed with our Teichm\"uller lift is
simply the $[G:H]$\textsuperscript{th} power map; this is the defining
property of the usual Teichm\"uller maps.

\section{Twisted Hochschild homology and the relative cyclic nerve of
  $C_n$-sets}\label{sec:CyclicNerves}

On the way to the computation of the topological cyclic homology of
the dual numbers, Hesselholt and Madsen~\cite{HesselholtMadsen}
established that if $M$ is a pointed monoid, there is a natural
equivalence of cyclotomic spectra
\[
\THH(R[M])\simeq \THH(R)\wedge N^{\cyc}(M).
\]
In this section, we prove an analogous statement in the $C_n$-relative
context, giving a similar decomposition for twisted Hochschild
homology.

Recall that a pointed monoid in based $G$-sets is simply a monoid in
the category of based $G$-sets.  An important example of pointed
$G$-monoids come from norms of non-equivariant pointed monoids.

\begin{lemma}
Let $M$ be a (non-equivariant) pointed monoid.  Then $N_{e}^{C_n} M$
is a pointed monoid in $C_n$-sets.
\end{lemma}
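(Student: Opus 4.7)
The plan is to exploit the strong symmetric monoidal character of the norm functor on based sets: any strong monoidal functor sends monoids to monoids. Explicitly, on based sets $N_{e}^{C_{n}}$ is given by $X \mapsto X^{\wedge n}$ equipped with the cyclic permutation action of $C_{n}$, so there is a canonical identification $N_{e}^{C_{n}} S^{0} \cong S^{0}$ with the trivial $C_{n}$-action, and a canonical $C_{n}$-equivariant shuffle isomorphism $N_{e}^{C_{n}}(X \wedge Y) \cong N_{e}^{C_{n}}X \wedge N_{e}^{C_{n}}Y$. Both of these structure isomorphisms are visibly natural in $X$ and $Y$ and satisfy the coherence conditions of a (symmetric) monoidal functor; equivariance is checked by observing that both sides carry compatible cyclic actions on the $2n$ tensor factors.

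With those coherences in hand, given the pointed monoid structure $\mu \colon M \wedge M \to M$ and unit $\eta \colon S^{0} \to M$, I would define the multiplication on $N_{e}^{C_{n}}M$ as the composite
\[
N_{e}^{C_{n}}M \wedge N_{e}^{C_{n}}M \xrightarrow{\cong} N_{e}^{C_{n}}(M \wedge M) \xrightarrow{N_{e}^{C_{n}}(\mu)} N_{e}^{C_{n}}M,
\]
and the unit as $N_{e}^{C_{n}}(\eta) \colon S^{0} \cong N_{e}^{C_{n}} S^{0} \to N_{e}^{C_{n}} M$. Each constituent arrow is a morphism of based $C_{n}$-sets, so the resulting multiplication and unit are automatically $C_{n}$-equivariant.

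The remaining associativity and unit axioms then follow by the standard diagrammatic argument: each axiom diagram for $N_{e}^{C_{n}}M$ reduces, via naturality of the monoidal shuffle and functoriality of $N_{e}^{C_{n}}$, to the image under $N_{e}^{C_{n}}$ of the corresponding diagram for $M$, which commutes by hypothesis. There is no real obstacle here beyond bookkeeping; the only point that actually requires attention is confirming that the explicit smash-power-with-cyclic-action model of $N_{e}^{C_{n}}$ on based $G$-sets is the one compatible with the rest of the paper's norm formalism (compare the Hoyer description $F_{e}(C_{n},-)$ of Definition~\ref{def:normhoy} and Proposition~\ref{prop:normcalc}), so that the monoidal isomorphism we invoke is the one that will later be used when relating $N_{e}^{C_{n}} M$ to norms of Mackey functors and to $\THH$.
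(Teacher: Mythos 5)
Your argument is correct: the norm $N_e^{C_n}$ on based sets is the $n$-fold smash power with the cyclic permutation action, it is strong symmetric monoidal via the shuffle isomorphism and $N_e^{C_n}S^0\cong S^0$, and strong monoidal functors carry monoids to monoids, with equivariance of the resulting multiplication coming for free since every arrow in the composite is a map of based $C_n$-sets. The paper states this lemma without proof precisely because this is the standard argument, so your write-up supplies exactly the intended justification; your closing remark about matching the smash-power model to the paper's norm formalism is also consistent with the paper's use of $M^{\wedge(k+1)}=N_{C_n}^{C_{n(k+1)}}(M)$ in the relative cyclic nerve.
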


For a pointed monoid $M$ and a $G$-Green functor $\m{R}$, the pointed
monoid algebra $\m{R}[M]$ is defined as follows.

\begin{definition}
If $M$ is a pointed monoid in $C_n$-sets, then for any Green functor
$\m{R}$, we can build a new Green functor
\[
\m{R}[M]=\m{R}_{M}/\m{R}_{\ast}:=\left(\bigoplus_{C_n\cdot m\in M/C_n}
\m{R}_{C_n\cdot m}\right)/\m{R}_{\ast},
\]
and where the multiplication is specified by
\[
\m{R}_{C_n\cdot m}\Box \m{R}_{C_n\cdot m'}\cong \m{R}_{C_n\cdot
  m\times C_n\cdot m'}\to \m{R}_{M}/\m{R}_\ast,
\]
where the last map is simply induced by the multiplication in $M$.
\end{definition}

\begin{remark}\label{rem:bredon}
If $M$ is a pointed monoid in $C_n$-sets, then for any Green functor
$\m{R}$, have a natural isomorphism of Green functors
\[
\m{R}[M]\cong \m{R}\Box \m{A}[M].
\]
In particular, $\m{R}[M]$ is also describing the $\m{R}$-valued Bredon
chains on $M$.
\end{remark}

If $M$ is a pointed monoid in $C_n$-sets, then we can define a
relative variant of the cyclic nerve (c.f.~\cite[8.1]{normTHH}).
As in the topological setting, the relative cyclic nerve is not a
cyclic set but has the structure of the $n$-fold subdivision of a
cyclic set; it is a functor over $\Lambda_n^{\op}$~\cite[1.5]{BHM}.

\begin{definition}
Let $M$ be a pointed monoid in $C_n$-sets.  We write $N^{\cyc}_{C_n}
M$ to denote the $\Lambda_n^{\op}$-set that has $k$-simplices
\[
\left(N^{\cyc}_{C_n}M\right)_k=M^{\wedge (k+1)}=N_{C_n}^{C_{n(k+1)}} (M).
\]
Here the degeneracy maps are given by the unit, the structure maps $d_i$ for
$0 \leq i < k$ are the pairwise multiplications, and the last
structure map uses the cyclic permutation and then applies the action
of $\gamma$ before multiplying.  The additional operator $\tau_k$ is
specified by the action of $C_{n(k+1)}$ on the indexed smash product.
\end{definition}

A $\Lambda_n^{\op}$-set forgets to a simplicial $C_n$-set (with the
action in degree $k$ generated by $\tau_k^{k+1}$), and so the
geometric realization has a $C_n$-action.  Moreover, the geometric
realization of a $\Lambda_n^{\op}$-set is endowed with an $S^1$-action
that extends the $C_n$ action.

When $M$ is non-equivariant, it follows by inspection that we can
describe the relative cyclic bar construction of the norm in terms of
the edgewise subdivision.

\begin{proposition}
Let $M$ be a (non-equivariant) pointed monoid.  Then there is a
natural isomorphism of $\Lambda_n^{\op}$-sets
\[
N^{\cyc}_{C_n} N_e^{C_n}M\cong sd_n N^{\cyc} M.
\]
\end{proposition}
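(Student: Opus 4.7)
The strategy is to unwind both sides explicitly at the level of pointed sets and then check that the resulting bijection respects the \(\Lambda_n^{\op}\)-structure. Since the norm is strong symmetric monoidal, at level \(k\) we have
\[
(N^{\cyc}_{C_n} N_e^{C_n} M)_k = (N_e^{C_n} M)^{\wedge (k+1)} \cong N_e^{C_{n(k+1)}} M \cong M^{\wedge n(k+1)},
\]
while \((\sd_n N^{\cyc} M)_k = (N^{\cyc} M)_{n(k+1)-1} \cong M^{\wedge n(k+1)}\). I would write a simplex on the left as \((m_{0,0},\dots,m_{n-1,0}) \wedge \cdots \wedge (m_{0,k},\dots,m_{n-1,k})\) (outer index \(a\), inner index \(j\)) and on the right as \(m_0 \wedge m_1 \wedge \cdots \wedge m_{n(k+1)-1}\), and identify them via \(m_{j,a} \leftrightarrow m_{j(k+1)+a}\). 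This is the blockwise indexing intrinsic to edgewise subdivision: the linear sequence is cut into \(n\) blocks of length \(k+1\), one per inner factor of \(N_e^{C_n} M\).

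Next I would verify compatibility with the structure maps. For face maps \(d_i\) with \(0 \leq i < k\), the left-hand multiplication of adjacent outer factors is componentwise in \(N_e^{C_n} M\), producing \(n\) parallel products within each block, which is exactly \(d_i^{sd}\) written as the composition of face maps at positions \(i + j(k+1)\) for \(j = 0,\dots,n-1\). Degeneracies are equally easy: the unit of \(N_e^{C_n} M\) has a unit in every inner slot, matching the block-wise degeneracies of \(\sd_n\). The cyclic operator \(\tau_k\) on the left is a generator of \(C_{n(k+1)}\) acting on \(N_{C_n}^{C_{n(k+1)}}(N_e^{C_n} M) = N_e^{C_{n(k+1)}} M\) by rotation of the \(n(k+1)\) factors by one, while on the right \(\tau_k\) in the subdivision is \(\tau_{n(k+1)-1}\) of \(N^{\cyc} M\), also rotation by one in the linear indexing.

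The crux, and the step that justifies the twisted definition on the left, is the last face map \(d_k\). On the left, one rotates the last outer factor to the front, acts by the generator \(\gamma\) of \(C_n\) on this new first factor (cyclically permuting its \(n\) inner entries), and then multiplies the first two outer factors componentwise; the result places \(m_{(j-1) \bmod n,\,k}\cdot m_{j,0}\) in inner position \(j\) of the new outer zero, with the other outer factors untouched. On the right, \(d_k^{sd}\) collapses the \(n\) pairs \(\{(j,k),\,((j+1) \bmod n,\,0)\}\) for \(j = 0,\dots,n-1\), with the pair at \(j = n-1\) using the cyclic face map of \(N^{\cyc} M\) to wrap around. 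After tracking the re-indexing from these collapses the formulas agree on the nose: the \(\gamma\)-twist is precisely the shift needed to reconcile the linear order on \(n(k+1)\) factors with the block-wise outer/inner decomposition across the cyclic boundary. I would fix conventions by hand-verifying the small case \(n = k = 2\) (six tensor factors) before writing out the general indexing bookkeeping; this is where bookkeeping errors are most likely to creep in.
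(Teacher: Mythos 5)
Your proof is correct and is exactly the verification the paper has in mind: the paper offers no written argument (it asserts the isomorphism "by inspection"), and your blockwise identification $m_{j,a}\leftrightarrow m_{j(k+1)+a}$, together with the check that the $\gamma$-twist in the last face matches the wrap-around face $d_{n(k+1)-1}$ of the subdivision and that the $C_{n(k+1)}$-rotation matches $\tau_{n(k+1)-1}$, is precisely that inspection carried out in detail.
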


The following theorem describes the twisted Hochschild homology of
$\m{R}[M]$.

\begin{theorem}
Fix a cyclic group $G=C_n$.  Let $M$ be a pointed monoid in $G$-sets
and $\m{R}$ a $G$-Green functor.  Then we have a natural weak
equivalence 
\[
\HC{G}_{\ast}(\m{R})\Box \m{C}_{\ast}^{cell}(N^{\cyc}_{G} M;\mA) \to
\HC{G}_{\ast}(\m{R}[M]),
\]
where here we are regarding $N^{\cyc}_{G} M$ as a simplicial $G$-set
and \(\m{C}_{\ast}^{cell}\) denotes the Mackey extension of the Bredon
cellular chains.
\end{theorem}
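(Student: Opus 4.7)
The plan is to exhibit the map as a levelwise isomorphism of simplicial Mackey functors and then check compatibility with the simplicial structure; naturality will be immediate from the construction.

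First, Remark~\ref{rem:bredon} gives $\m{R}[M] \cong \m{R} \Box \m{A}[M]$ as Green functors. Using that $\Box$ is symmetric monoidal, I obtain for each $k \geq 0$ a natural isomorphism
\[
\HC{G}_k(\m{R}[M]) = (\m{R}[M])^{\Box(k+1)} \cong \m{R}^{\Box(k+1)} \Box \m{A}[M]^{\Box(k+1)}.
\]
Next I would establish that $\m{A}[M]^{\Box(k+1)} \cong \m{A}[M^{\wedge(k+1)}]$: writing a pointed $G$-set as a wedge of orbits $(G/H)_+$ and using Proposition~\ref{prop:boxrep}, together with the fact that $\Box$ distributes over direct sums in each variable, reduces the claim to the identity $((G/H_1) \times (G/H_2))_+ \cong (G/H_1)_+ \wedge (G/H_2)_+$. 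Iterating $k$ times gives the desired isomorphism, and by the definition of the Mackey extension of Bredon cellular chains, $\m{A}[M^{\wedge(k+1)}]$ is precisely the $k$-th level of $\m{C}^{cell}_\ast(N^{\cyc}_G M; \m{A})$.

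The hard part will be checking that these levelwise isomorphisms commute with the simplicial structure. The degeneracies and the inner face maps $d_0,\dots,d_{k-1}$ are straightforward: they are induced by the unit and by pairwise multiplications in $\m{R}[M]$, and because the Green functor structure on $\m{R} \Box \m{A}[M]$ is the box product of the structures on the factors, they respect the decomposition into $\m{R}^{\Box(k+1)} \Box \m{A}[M]^{\Box(k+1)}$. The real subtlety lies in the twisted last face map $d_k$, which cycles the last factor to the front and then acts by the generator $g = e^{2\pi i/|G|}$ before multiplying. The key observation is that the $g$-action on $\m{R}[M]$ is diagonal---acting on $\m{R}$ via its $C_n$-structure and on $\m{A}[M]$ via the pointed $G$-set structure on $M$---so the twist decomposes across the box product, and since the cyclic permutation of box factors also respects the decomposition, $d_k$ on $\HC{G}_k(\m{R}[M])$ matches the box product of the corresponding twisted face maps on $\HC{G}_\ast(\m{R})$ and on $\m{C}^{cell}_\ast(N^{\cyc}_G M; \m{A})$. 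The latter is built from exactly the analogous twist-and-cycle structure on the cyclic nerve $N^{\cyc}_G M$, as prescribed by the definition of that nerve.

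Assembling these levelwise isomorphisms yields an isomorphism of simplicial Mackey functors, which is a fortiori the asserted weak equivalence.
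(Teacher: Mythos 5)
Your proposal is correct and follows essentially the same route as the paper: decompose $\m{R}[M]\cong\m{R}\Box\m{A}[M]$ via Remark~\ref{rem:bredon}, identify $\m{A}[M]^{\Box(k+1)}\cong\m{A}[M^{\wedge(k+1)}]$ with the $k$-th level of $\m{C}^{cell}_{\ast}(N^{\cyc}_G M;\mA)$ using monoidality of representables, and conclude by the symmetry of the box product. The paper compresses the identification $C^{cell}_{\ast}(N^{\cyc}_G M;\mA)\simeq \HC{G}_{\ast}(\m{A}[M])$ into one sentence, whereas you usefully spell out the compatibility of the twisted last face map with the diagonal $g$-action; this is the same argument, just with the simplicial bookkeeping made explicit.
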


\begin{proof}
The key point is that there is a natural equivalence
\[
C^{cell}_{\ast}(N^{\cyc}_{G} M; \mA) \htp \HC{G}_{\ast}(\m{A}[M]),
\]
since the cellular chains takes the cartesian product to the box
product of Mackey functors and $C^{cell}_{\ast} (M; \m{A}) \cong \m{A}[M]$.
Remark~\ref{rem:bredon} implies that the natural map 
\[
\HC{G}_{\ast}(\m{R})\Box C^{cell}_{\ast}(N^{\cyc}_{G} M;\mA) \cong
\HC{G}_{\ast}(\m{R}) \Box \HC{G}_{\ast}(\m{A}[M]) \to \HC{G}_{\ast}(\m{R}[M]),
\]
where the last map is induced by the levelwise box product, is a weak
equivalence.
\end{proof}

\section{Shukla Homology}\label{sec:Shukla}

Whereas THH is constructed as a derived functor, the classical
definition of Hochschild homology is not derived.  As a consequence,
the hypothesis of flatness of the input over the base ring is often
necessary; for example, such hypotheses are standard when considering
the identification of the $E_2$ term of the B\"okstedt spectral
sequences in terms of Hochschild homology.  It is sometimes convenient
to work with the derived analogue of Hochschild homology; here we take
a simplicial free resolution of the input ring and apply Hochschild
homology.  This construction is traditionally called Shukla homology.
For example, see~\cite{Brun} and~\cite{Morrow} for applications of
Shukla homology.

There is a natural extension of the twisted Hochschild homology of
Green functors to a twisted analogue of Shukla
homology. Unfortunately, there are complications arising from
difficulties in the homological algebra of Green functors.
Specifically, we need to drop the assumption that we are working with
commutative rings or Green functors, as the free commutative ring
objects fail to be flat over the Burnside ring.

\begin{proposition}
The symmetric powers of a projective Mackey functor need not be flat. 
\end{proposition}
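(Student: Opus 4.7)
My plan is to exhibit an explicit counterexample in the smallest nontrivial case: $G = C_2$ with $\mP = \mA_{G/e}$. The first step is to compute $\Sym^2(\mP) = (\mP \Box \mP)_{\Sigma_2}$ directly. By Proposition~\ref{prop:boxrep}, $\mP \Box \mP \cong \mA_{G/e \times G/e}$. The $G$-set $G/e \times G/e$ decomposes into two orbits (the diagonal and the anti-diagonal), each isomorphic to $G/e$, and the $\Sigma_2$-swap preserves each orbit: trivially on the diagonal summand, and via the Weyl $C_2$-action on the anti-diagonal summand. Taking $\Sigma_2$-coinvariants factor-by-factor yields
\[
\Sym^2(\mA_{G/e}) \cong \mA_{G/e} \oplus \mN,
\]
where $\mN$ is the Mackey functor with $\mN(G/e) = \mN(G/G) = \Z$, restriction multiplication by $2$, and transfer the identity.

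The second step is to verify that $\mN$ is not flat, which suffices. I would use the projective presentation $0 \to \m{K} \to \mA_{G/e} \to \mN \to 0$, where $\m{K}(G/e) = \Z(1 - g) \subset \Z[C_2]$ is the sign representation under the Weyl $C_2$ and $\m{K}(G/G) = 0$ (both structure maps trivial). Since $\mA_{G/e}$ is projective, this identifies
\[
\boxtor_1^{\mA}(\mN, \mN) \;\cong\; \ker\bigl(\m{K} \Box \mN \to \mA_{G/e} \Box \mN\bigr).
\]
A direct Frobenius calculation at $G/G$ shows $(\m{K} \Box \mN)(G/G) \cong \Z/2$: the unique generator $\tr_e^G(1 \otimes 1)$ is killed by $2$ owing to the Weyl sign on $\m{K}$ combined with $\res^{\mN} = 2$. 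On the other hand, along the inclusion $\m{K} \hookrightarrow \mA_{G/e}$ this class maps to $\tr_e^G((1 - g) \otimes 1) = \tr_e^G(1 \otimes 1) - \tr_e^G(g \otimes 1) = 0$ inside $(\mA_{G/e} \Box \mN)(G/G) \cong \mN(G/e) = \Z$, the vanishing coming from Weyl-invariance of the transfer. Hence $\Z/2$ lies entirely in the kernel, witnessing $\boxtor_1^{\mA}(\mN, \mN)(G/G) \neq 0$ and therefore the non-flatness of $\mN$.

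The main obstacle is the careful bookkeeping of Frobenius relations and Weyl actions in the presentation of $\mM \Box \mN$ at $G/G$ for non-projective $\mM$; this is elementary but has several moving parts. Conceptually, however, the phenomenon is clear: symmetric coinvariants of a projective Mackey functor necessarily involve Weyl coinvariants on representing $G$-sets, and this procedure manufactures torsion where there was none before, reflecting the basic fact that the free commutative object in Mackey functors is not $\bigoplus_n \Sym^n$ but rather a Tambara functor encoding the norm structure.
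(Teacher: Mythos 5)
Your proof is correct and follows essentially the same route as the paper: the same example $G = C_2$, $\mP = \mA_{C_2}$, with $\Sym^2(\mP) \cong \mA_{C_2} \oplus \mZ^{\ast}$, where your $\mN$ (levels $\Z$, restriction $2$, transfer the identity) is exactly the dual $\mZ^{\ast}$ of the constant Mackey functor appearing in the paper's proof. The only difference is one of detail: where the paper cites the stronger fact that $\mZ^{\ast}$ has infinite Tor dimension, you verify non-flatness directly by exhibiting $\boxtor_1^{\mA}(\mZ^{\ast},\mZ^{\ast})(G/G) \neq 0$, and that computation is correct.
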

\begin{proof}
Let $G=C_{2}$, and let $\mP=\mA_{C_{2}}$. Then 
\[
\Sym^{2}(\mP)=\mA_{C_{2}}\oplus\mZ^{\ast}.
\]
The module $\mZ^{\ast}$ has infinite Tor dimension.
\end{proof}

\begin{remark}
For a general finite group $G$, if our projective is of the form
$\mP=\mA_{G/H}$ for $H$ a proper subgroup, then symmetric powers
$\Sym^{n}\mP$ might not be projective for $n$ dividing $[G:H]$. This
is exactly the issue that the $G$-symmetric monoidal structure in
equivariant stable homotopy fixes: the non-projective summands are
essentially just ``missing'' the norm classes which we would see if we
computed $\m{\pi}_{0} \Sym^{n}(G/H_{+})$. 
\end{remark}

To avoid the difficulties that arise in the commutative case, we
instead simply work with associative Green functors. 

\begin{proposition}
If $\m{P}$ is a projective Mackey functor, then the free associative Green functor on $\m{P}$, the box-tensor algebra $T(\m{P})$, is a projective object in Mackey functors.
\end{proposition}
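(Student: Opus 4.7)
The plan is to reduce the statement to the claim that each homogeneous piece $\m{P}^{\Box n}$ of the box-tensor algebra
\[
T(\m{P}) = \bigoplus_{n\geq 0} \m{P}^{\Box n}
\]
is projective, and then invoke the fact that an arbitrary direct sum of projectives is projective in any cocomplete abelian category. Thus the whole argument reduces to showing that the class of projective Mackey functors is closed under the box product.

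To verify closure under $\Box$, I would first observe that projective Mackey functors are, by definition, retracts of direct sums of representables $\mA_{T}$ (for finite $G$-sets $T$). Since the box product commutes with direct sums in each variable (it is a left Kan extension, hence preserves colimits), and commutes with the formation of retracts, it suffices to check the claim on representables. This is exactly Proposition~\ref{prop:boxrep}: $\mA_{T_1}\Box \mA_{T_2}\cong \mA_{T_1\times T_2}$, which is again representable, hence projective. Iterating, $\mA_{T_1}\Box\cdots\Box\mA_{T_n}\cong \mA_{T_1\times\cdots\times T_n}$ is projective, and therefore $\m{P}^{\Box n}$, being a retract of a direct sum of such representables, is projective for every $n\geq 0$. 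The $n=0$ term is the unit $\mA$, which is itself $\mA_*$ and hence projective.

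Combining these observations yields that $T(\m{P}) = \bigoplus_{n\geq 0} \m{P}^{\Box n}$ is a direct sum of projective Mackey functors, and is therefore projective as claimed. There is no serious obstacle here: the key structural input is that the box product of representable Mackey functors is representable, a feature that fails at the level of symmetric powers (where one must quotient by a symmetric group action on $T^n$, producing non-representable orbits like the norm classes discussed in the preceding remark). This is precisely why the proposition holds in the associative setting but fails in the commutative setting, consistent with the motivation given just before the statement.
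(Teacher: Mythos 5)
Your proof is correct and follows essentially the same route as the paper's: reduce via direct sums and retracts to representables, and use that the box product of representables $\mA_{T_1}\Box\mA_{T_2}\cong\mA_{T_1\times T_2}$ is again representable, hence projective. The only difference is organizational (you argue degree by degree on $\m{P}^{\Box n}$, while the paper applies $T(\mhyphen)$ to the presentation of $\m{P}$ all at once), which changes nothing of substance.
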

\begin{proof}
The tensor algebra in Mackey functors is simply
\[
T(\mM)=\bigoplus_{n\in\mathbb N} \mM^{\Box n}.
\]
By functoriality, if $\m{P}$ is a direct summand of $\bigoplus_i\m{A}_{S_i}$, with each \(S_i\) a finite \(G\)-set, then $T(\m{P})$ is a direct summand of $T(\bigoplus_i\m{A}_{S_i})$. It suffices therefore to show that $T(\m{A}_S)$ is projective for any finite $G$-set $S$. Since these are representable Mackey functors, we have natural isomorphisms
\[
\mA_{S}^{\Box n}\cong \mA_{n\cdot S},
\]
where $n\cdot S$ is the disjoint union of $n$ copies of $S$.
\end{proof}

\begin{corollary}
Let \(\mR\) be an associative Green functor and let \(\tilde\mR_\bullet\) be a simplicial resolution of \(\mR\) by free associative Green functors. Then the underlying simplicial Mackey functor for \(\tilde{\mR}_\bullet\) is cofibrant.
\end{corollary}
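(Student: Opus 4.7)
The plan is to reduce this claim to Proposition~\ref{prop:cofibrant}(2), which characterizes cofibrant simplicial Mackey functors as precisely the levelwise projective ones. It therefore suffices to show that for each simplicial degree $n$, the Mackey functor $\tilde{\mR}_n$ is projective.

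By hypothesis $\tilde{\mR}_n$ is a free associative Green functor, so $\tilde{\mR}_n \cong T(\m{P}_n)$ for some free Mackey functor $\m{P}_n$, i.e., a direct sum of representables $\mA_{S_i}$ on finite $G$-sets $S_i$. Such a direct sum of representables is projective in $\Mack$ by the Yoneda description of the representables. The immediately preceding proposition then says that $T(\m{P}_n)$ is itself projective as a Mackey functor, and this is the only nontrivial input needed.

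Assembling these observations, $\tilde{\mR}_\bullet$ is a levelwise projective simplicial Mackey functor, and Proposition~\ref{prop:cofibrant}(2) yields cofibrancy. Because the characterization of cofibrancy invoked here is purely levelwise, no compatibility between simplicial degrees needs to be verified and there is no genuine obstacle: the argument simply chains two facts already recorded in the text. One should note in passing that the analogous argument in the commutative setting would genuinely fail, since the proposition on symmetric powers shows that the free commutative Green functor on a projective can be non-flat (hence non-projective); the associative tensor-algebra construction is what makes the argument go through cleanly.
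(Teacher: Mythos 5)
Your argument is correct and is exactly the one the paper intends: the corollary follows immediately from the preceding proposition (the tensor algebra on a projective Mackey functor is projective) combined with Proposition~\ref{prop:cofibrant}(2), applied levelwise. No further comparison is needed.
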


Since Corollary~\ref{cor:NormHomotopical} shows that the norm of
Mackey functors preserves weak equivalences between cofibrant objects,
this also allows us to build homotopically meaningful resolutions of
\(N_H^G\mR\).

\begin{corollary}
Let \(\mR\) and \(\mR'\) be associative Green functors for \(H\) such
that there is a weak equivalence $\mR \to \mR'$.  Then for any two
simplicial resolutions $\tilde\mR_\bullet$ and
$\tilde\mR'_\bullet$ of \(\mR\) and \(\mR'\) by free associative Green
  functors, there is a zigzag of weak equivalences 
\[
N_H^G\tilde\mR_\bullet \htp N_H^G\tilde\mR'_\bullet.
\]
\end{corollary}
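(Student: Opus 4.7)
The strategy is to combine the preceding corollary---which identifies each resolution by free associative Green functors as a cofibrant simplicial Mackey functor---with Corollary~\ref{cor:NormHomotopical}, which ensures that $N_H^G$ preserves weak equivalences between cofibrant simplicial Mackey functors. Since the norm can fail to preserve general weak equivalences, the work is in producing an honest zigzag of weak equivalences between the two free resolutions \emph{before} applying $N_H^G$.

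First I would construct a third simplicial free resolution $\tilde{\mR}''_\bullet$ together with maps of simplicial associative Green functors
\[
\tilde\mR_\bullet \xleftarrow{f} \tilde{\mR}''_\bullet \xrightarrow{f'} \tilde\mR'_\bullet
\]
whose augmentations recover $\operatorname{id}_\mR$ on the left and the given weak equivalence $\mR \to \mR'$ on the right. This is a standard inductive construction on simplicial degree: at level $n$, choose a projective Mackey functor $\m{P}_n$ equipped with compatible lifts to $\tilde\mR_n$ and (through the weak equivalence on augmentations) to $\tilde\mR'_n$ subject to the matching constraints imposed by the face and degeneracy maps already chosen at lower degrees, and set $\tilde{\mR}''_n := T(\m{P}_n)$, the free associative Green functor on $\m{P}_n$. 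The universal property of $T(-)$ then extends the lifts of $\m{P}_n$ uniquely to maps of associative Green functors. Once $f$ and $f'$ are constructed, they are weak equivalences of the underlying simplicial Mackey functors by 2-out-of-3 applied to the augmentations. Since the preceding corollary shows that all three of $\tilde\mR_\bullet$, $\tilde{\mR}''_\bullet$, and $\tilde\mR'_\bullet$ are cofibrant, Corollary~\ref{cor:NormHomotopical} then delivers the desired zigzag
\[
N_H^G \tilde\mR_\bullet \xleftarrow{\simeq} N_H^G \tilde{\mR}''_\bullet \xrightarrow{\simeq} N_H^G \tilde\mR'_\bullet.
\]

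The main obstacle I anticipate is the inductive step: the lift at each simplicial degree must be made compatible with the associative Green functor structure \emph{and} with the face/degeneracy maps built in lower degrees, which in effect requires controlling the relevant matching objects. The required lifts exist because the target augmentations are surjective weak equivalences in each simplicial degree and because $T(\m{P}_n)$, being the free associative Green functor on a projective Mackey functor, is itself projective as a Mackey functor by the preceding proposition. This is the same sort of Reedy-style argument used to build cofibrant replacements in simplicial categories of algebras, and once the projectivity of $T(\m{P}_n)$ is in hand, no further subtleties arise.
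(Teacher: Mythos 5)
Your argument is correct and is exactly the (implicit) argument the paper intends: the free resolutions are cofibrant as simplicial Mackey functors by the preceding corollary, the zigzag of weak equivalences between them can be arranged to run entirely through cofibrant objects, and Corollary~\ref{cor:NormHomotopical} then transports the zigzag through $N_H^G$. The paper states the corollary without proof, and your explicit construction of the intermediate free resolution is a standard way to supply the missing zigzag.
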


\begin{remark}
Since the norm commutes with sifted colimits, we know that the zeroth homology group of \(N_H^G\tilde\mR_\bullet\) is actually \(N_H^G\mR\). We make no claim, however, that the higher homology groups vanish. Moreover, this is almost never a resolution by free associative Green functors.
\end{remark}

The cofibrancy of the underlying simplicial Mackey functor for the simplicial Green functor \(N_H^G\tilde\mR_\bullet\) is all we need to define our Shukla complex, however.

\begin{definition}
Let \(\mR\) be an associative Green functor for \(H\subset G\), a finite subgroup of \(S^1\), and let \(\tilde\mR_\bullet\) be a simplicial resolution of \(\mR\) by free associative Green functors. Then the \(G\)-Shukla complex of \(\mR\) is the complex associated to the simplicial Mackey functor
\[
\big(N_H^G\tilde\mR_\bullet\big)\boxover{\big(N_H^G\tilde\mR_\bullet\big)^e}{}^{\gamma}\big(N_H^G\tilde\mR_\bullet\big),
\]
where \(\gamma\) is the generator of \(G\).
\end{definition}

Recall that Definition~\ref{def:normhhr} describes the norm in Mackey
functors via the norm in spectra.  We can describe relative norms of
Mackey functor modules over Green functors in this way as well.  Since
\(H\mA\) is a genuine equivariant commutative ring spectrum, the
category of $H\mA$ modules has internal norms (e.g.,
see~\cite{HillHopkins, BlumbergHill} and~\cite[2.20,
  \S6]{normTHH}): 
if \(E\) is an \(i_{H}^{\ast}H\mA\)-module,  
\[
{}_{H\mA}N_{H}^{G}E:= H\mA\sm{N_{H}^{G}i_{H}^{\ast}H\mA}N_{H}^{G}E,
\]
where the map $N_H^G i_H^{\ast} H\mA \to H\mA$ is the counit of the
adjunction.  Choosing a cofibrant model for $H\mA$, this internal norm
is homotopical (see~\cite[6.11]{normTHH} for analogous arguments) in the
sense that it preserves weak equivalences between cofibrant objects.
Moreover, a straightforward extension of the work of~\cite{MazurArxiv,
  HoyerThesis} establishes a correspondence between $\pi_0$ of the
relative norm and the algebraic construction.  This structure gives
another way to interpret the Shukla homology of $\m{R}$.

\begin{proposition}
The Shukla homology groups of $\m{R}$ are isomorphic to the homotopy
groups of the derived smash product 
\[
H\m{R}\sm{H\m{R}\sm{H\m{A}} H\m{R}} H\m{R}.
\]
\end{proposition}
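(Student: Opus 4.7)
The plan is to identify the Shukla complex with $\pi_\ast$ of a simplicial spectrum built by levelwise Eilenberg--Mac Lane realization, whose geometric realization is then the desired derived smash product. The three key ingredients are: (a) Proposition~\ref{prop:pizerosym}, which shows that the box product of $(-1)$-connected Mackey functors agrees with $\pi_0$ of the smash product of their Eilenberg--Mac Lane spectra when the inputs are cofibrant; (b) Corollary~\ref{cor:NormHomotopical} and Proposition~\ref{prop:NormofProjectives}, which together ensure that $N_H^G$ preserves cofibrancy and weak equivalences on the simplicial level; and (c) the compatibility between the algebraic relative norm $N_H^G$ and the topological relative norm $_{H\mA}N_H^G$ on $\pi_0$, cited in the paragraph preceding the proposition.

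First I would fix a simplicial resolution $\tilde\mR_\bullet \to \mR$ by free associative Green functors. By the preceding proposition, each $\tilde\mR_k$ is a projective Mackey functor, hence $\tilde\mR_\bullet$ is cofibrant in $\sMack$ by Proposition~\ref{prop:cofibrant}, and $N_H^G\tilde\mR_\bullet$ is a cofibrant simplicial $G$-Mackey functor by Corollary~\ref{cor:NormHomotopical}. Applying $H$ levelwise gives a simplicial $H\mA$-algebra whose realization, via the Quillen equivalence between non-negatively graded dg Mackey functors and connective $H\mA$-module spectra, models $_{H\mA}N_H^G H\mR$. Similarly, $H$ applied to $(N_H^G\tilde\mR_\bullet)^e$ levelwise models the derived $H\mR \sm_{H\mA}H\mR$.

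Second, I would apply Proposition~\ref{prop:pizerosym} at each simplicial level and use that geometric realization commutes with smash products of cofibrant simplicial spectra. This identifies $\pi_\ast$ of the derived smash product in the statement with the homology of the simplicial Mackey functor
\[
\big(N_H^G\tilde\mR_\bullet\big)\boxover{\big(N_H^G\tilde\mR_\bullet\big)^e}{}^{\gamma}\big(N_H^G\tilde\mR_\bullet\big),
\]
which is precisely the Shukla complex. The cofibrancy of each factor is exactly what makes the levelwise box product compute the derived smash product, since $\pi_0(H\mM \wedge H\mN)\cong \mM\Box\mN$ when $\mM$ and $\mN$ are projective, so no higher correction terms arise at each simplicial degree.

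The main obstacle is the comparison between iterated point-set smash products over $H\mA$ of the free simplicial $H\mA$-algebra $HN_H^G\tilde\mR_\bullet$ and the derived two-sided bar construction after realization. This amounts to verifying that the Bökstedt-style spectral sequence collapses at $E_2$ for each simplicial degree (because the factors are projective Mackey functors at each level), combined with a connectivity argument to conclude that the realization of the simplicial spectrum $HN_H^G\tilde\mR_\bullet \sm_{H(N_H^G\tilde\mR_\bullet)^e} {}^\gamma HN_H^G\tilde\mR_\bullet$ genuinely represents the intended derived smash product. Once this compatibility is established, taking $\pi_\ast$ of the realization yields the claimed isomorphism of homotopy groups.
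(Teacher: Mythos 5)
Your overall strategy is the one the paper intends: the paper gives no explicit proof of this proposition, stating it as an immediate consequence of the preceding discussion (the relative norm ${}_{H\mA}N_H^G$ on $H\mA$-modules is homotopical, its $\pi_0$ agrees with the algebraic norm, and the section is explicitly left as a sketch), and your levelwise Eilenberg--Mac\,Lane realization argument is the natural expansion of exactly that discussion. You also correctly restore the norm and the twist ${}^{\gamma}(-)$ that the displayed formula in the statement suppresses.

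One justification in your second step needs repair. Proposition~\ref{prop:pizerosym} identifies only $\pi_0(E\wedge F)$ for the \emph{absolute} smash product, and it is false that ``no higher correction terms arise'' there: already $H\mA\wedge H\mA$ has nontrivial higher homotopy, so $H\mM\wedge H\mN$ is not an Eilenberg--Mac\,Lane spectrum even for projective $\mM,\mN$. The levelwise identification you need is instead that the smash product \emph{over $H\mA$} of Eilenberg--Mac\,Lane spectra of projectives is again Eilenberg--Mac\,Lane, which follows from $H\mA_T\wedge_{H\mA}H\mA_S\simeq \Sigma^{\infty}(T\times S)_+\wedge H\mA\simeq H\mA_{T\times S}$; since every smash product in the statement is taken relative to $H\mA$ (or to the $H\mA$-algebra $H\mR\wedge_{H\mA}H\mR$), this is what makes the levelwise box products, and the levelwise two-sided bar construction over the enveloping algebra, compute the derived smash products with no higher corrections. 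With that substitution, and granting the Quillen equivalence between $\Ch_*(\Mack)$ and $H\mA$-modules that the paper itself acknowledges has not appeared in the literature, your argument goes through at the same level of rigor as the paper's.
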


\bibliographystyle{plain}

\bibliography{WittGreen}

\end{document}